 \makeatletter \@addtoreset{equation}{section}
\newtheorem{thm}{Theorem}[section]
\newtheorem{hyp}[thm]{Hypothesis}{\rm}
\newtheorem{lemm}[thm]{Lemma}
\newtheorem{prop}[thm]{Proposition}
\newtheorem{defi}[thm]{Definition}
\newtheorem{rmk}[thm]{Remark}{\rm}
\newcommand{\R}{{\mathbb R}}
\newcommand{\N}{{\mathbb N}}
\newcommand{\Rd}{\mathbb R^d}
\newcommand{\A}{\mathcal{A}}
\newcommand{\eps}{\varepsilon}
\newcommand{\ds}{\displaystyle}
\newcommand{\bd}{\begin{defi}}
\newcommand{\ed}{\end{defi}}
\newcommand{\nnm}{\nonumber}
\newcommand{\be}{\begin{equation}}
\newcommand{\ee}{\end{equation}}
\newcommand{\barr}{\begin{array}}
\newcommand{\earr}{\end{array}}
\newcommand{\bmn}{\begin{eqnarray}}
\newcommand{\emn}{\end{eqnarray}}
\newcommand{\bnm}{\begin{eqnarray*}}
\newcommand{\enm}{\end{eqnarray*}}
\newcommand{\bln}{\begin{subequations}}
\newcommand{\eln}{\end{subequations}}
\newcommand{\ba}{\begin{align}}
\newcommand{\ea}{\end{align}}
\newcommand{\banm}{\begin{align*}}
\newcommand{\eanm}{\end{align*}}
\title[Semilinear non autonomous equations]{Semilinear nonautonomous parabolic equations with unbounded coefficients in the linear part}
\author{L. Angiuli}
\author{A. Lunardi}
\address{Dipartimento di Matematica, Universit\`a del Salento, via per Arnesano, s.n., 73100 Lecce, Italy.}
\address{Dipartimento di Matematica, Universit\`a degli Studi di Parma, Parco Area delle Scienze 53/A, I-43124 Parma, Italy.}
\email{luciana.angiuli@unsalento.it}
\email{alessandra.lunardi@unipr.it}
\keywords{nonautonomous second-order elliptic
operators, semilinear parabolic equations, unbounded coefficients, stability}
\subjclass[2010]{35K58, 37L15}
\date{\today}
\begin{document}

\begin{abstract}
We study the Cauchy problem for the semilinear nonautonomous parabolic equation $u_t=\mathcal{A}(t)u+\psi(t,u)$ in $[s,\tau]\times \Rd$, $\tau> s $,  in the spaces $C_b([s, \tau]\times\Rd)$ and in $L^p((s, \tau)\times\Rd, \nu)$. Here $\nu$ is a Borel measure defined via a tight evolution system of measures for the evolution operator $G(t,s)$ associated to the family of time depending second order uniformly elliptic operators $\A(t)$. Sufficient conditions for existence in the large and stability of the null solution are also given in both $C_b$ and $L^p$ contexts. The novelty with respect to the literature is that the coefficients of the operators $\mathcal{A}(t)$ are allowed to be unbounded.
\end{abstract}

\maketitle

\section{Introduction}

This paper is devoted to the basic theory of a class of semilinear nonautonomous parabolic problems with non standard linear part. We consider Cauchy problems such as
\begin{equation}\label{C_P_intro}
\left\{
\begin{array}{l}
 D_t u(t,x)  = ({\mathcal{A}}(t)u)(t,x) +\psi(t,u(t,x)),\qquad\;\, t>s, \;x\in  \Rd,
\\
\\
u(s,x) =  f(x),\quad  x \in \Rd,
\end{array}\right.
\end{equation}
where  the elliptic operators
$${\mathcal{A}}(t) := \sum_{i,j=1}^d q_{ij}(t,x)D_{ij} + \sum_{i=1}^d b_i(t,x)D_i $$
have unbounded  coefficients $q_{ij}$, $b_i$ in $I\times \R^d$, $I$ being a right halfline or the whole $\R$, $D_i= \partial/\partial x_i$, $D_{ij}= \partial^2/\partial x_i\partial x_j$. To our knowledge, no result for this type of problems is available in the literature.

We make suitable assumptions on the coefficients in order that the linear part generates a Markov evolution operator $G(t,s)$ in $C_b(\R^d)$, the space of the bounded and continuous functions from $\R^d$ to $\R$. The coefficients of $\A(t)$ are smooth enough, namely locally $C^{\alpha/2, \alpha}$ for some $\alpha\in (0,1)$, the matrices $Q(t,x) = [q_{ij}(t,x)]_{i,j=1, \ldots d}$ are uniformly positive definite, and there exists a $C^2$ Lyapunov function  $\varphi:\R^d\mapsto [0, +\infty)$ such that
$$\lim_{|x|\to +\infty}\varphi(x)=+\infty,  \quad
(\mathcal{A}(t)\varphi)(x)\leq a-c\,\varphi(x),  \quad (t,x)\in I\times \Rd , $$
for some positive constants $a$ and $c$. Such assumption allows to use maximum principle arguments both in linear and in nonlinear equations; see e.g. the proof of Theorem \ref{Exlarge}. The evolution operator $G(t,s)$ is a contraction in $C_b(\R^d)$, namely
$$\|G(t,s)f\|_{\infty}\leq \|f\|_{\infty}, \quad f\in C_b(\R^d), $$
and for any $s\in I$, $(t,x)\mapsto (G(t,s)f)(x)\in C^{1,2}((s, +\infty)\times \R^d)\cap C ([s, +\infty)\times \R^d)$ is the unique bounded solution of
$$\left\{\begin{array}{l}
D_t v(t,x)=(\mathcal{A}(t)v)(t,x)  ,\quad  t>s, \;x\in  \Rd,\\
\\
v(s,x)= f(x),\quad  x \in \Rd .
\end{array} \right. $$
uniformly for $s<t$ in bounded intervals, say $a\leq s<t\leq b$.
The construction of the evolution operator $G(t,s)$ and its main properties are in \cite{KunLorLun09Non}.

By ``solution" to \eqref{C_P_intro} in an interval $[s, \tau]$ we mean a mild solution, namely a function that satisfies the identity
$$u(t, \cdot) = G(t,s)f + \int_s^t G(t,r)\psi(r, u(r, \cdot))dr, \quad s\leq t\leq \tau. $$

If  $f\in C_b(\R^d)$, the usual arguments for parabolic equations with standard linear part (e.g. \cite{LadSolUra68Lin,Hen81Geo}) are adapted to the present situation and lead to existence and uniqueness of a local mild solution, which is shown to be a classical solution under reasonable assumptions. To this aim  we prove regularity and asymptotic behavior results for mild solutions of linear nonhomogeneous Cauchy problems,
$$u(t, \cdot) = G(t,s)f + \int_s^t G(t,r)g(r,   \cdot)dr, \quad s\leq t\leq \tau. $$
While the case $g\equiv 0$ was thoroughly studied in \cite{KunLorLun09Non,AngLorLun},   the nonhomogeneous case was neglected. Here we prove local and global regularity results in Section 2 and an asymptotic behavior result in Section 4, that are used as tools in the nonlinear case.

The case of $L^p$ initial data is more difficult. Even in the linear autonomous case $\mathcal{A}(t)\equiv  \mathcal{A}$, the Cauchy problem may be not well posed in $L^p(\R^d, dx)$ if the coefficients of $\mathcal{A}$ are unbounded, unless the coefficients satisfy very restrictive growth assumptions. The only way to work in $L^p$ spaces is to replace the Lebesgue measure $dx$ by another measure, possibly a   weighted measure $\rho(x)dx$. The best situation in the autonomous case   is when there exists an invariant measure $\mu $, namely a Borel probability measure such that
$$\int_{\R^d}T(t)f\,d\mu = \int_{\R^d} f\,d\mu , \quad t>0, \;f\in C_b(\R^d), $$
where $T(t)$ is the Markov semigroup associated to $\mathcal{A}$ in $C_b(\Rd)$. Under reasonable assumptions, a unique invariant measure exists, it is absolutely continuous with respect to the Lebesgue measure,
and it is related to the asymptotic behavior of $T(t)$, since
$$\lim_{t\to +\infty} (T(t)f)(x) = \int_{\R^d} f\,d\mu, \quad f\in C_b(\R^d), \;x\in \R^d. $$
Moreover, the operators $T(t)$ are easily  extended to contractions in the spaces $L^p(\R^d, \mu)$ for every $p\in [1, +\infty)$.

The nonautonomous case is more complex. In general, a measure $\mu$ such that
$$\int_{\R^d}G(t,s)f\,d\mu = \int_{\R^d} f\,d\mu , \quad t>s, \;f\in C_b(\R^d), $$
does not exist. What plays the role of  invariant measures are the evolution systems of measures, namely families of Borel probability measures $\{ \mu_t:\;t\in I\}$ such that
$$\int_{\R^d}G(t,s)f\,d\mu_t = \int_{\R^d} f\,d\mu_s , \quad s\in I, \;t>s, \;f\in C_b(\R^d). $$
In this case, $G(t,s)$ can be extended to a contraction from $L^p(\R^d, \mu_s)$ to $L^p(\R^d, \mu_t)$ for $t>s$, for every $p\in [1, +\infty)$. However, in contrast to the autonomous case, where the invariant measure is unique under very weak assumptions, evolution systems of measures are not unique. Among all evolution systems of measures, the one related to the asymptotic behavior of $G(t,s)$ is the (unique) tight$^(\footnote{A set of Borel measures $\{\mu_t:\;t\in I\}$ in $\R^d$ is tight if for every $\varepsilon >0$ there exists $\rho>0$ such that $\mu_t(\R^d\setminus B(0, \rho))\leq \varepsilon$, for every $t\in I$. }^)$ evolution system of measures. See \cite{KunLorLun09Non,AngLorLun}.

In the paper \cite{KunLorLun09Non} a tight evolution system of measures $\{ \mu_t:\;t\in I\}$ was proved to exist. Here we set our nonlinear problem in the spaces $L^p(\R^d, \mu_t)$ where $\{ \mu_t:\;t\in I\}$ is such a tight evolution system of measures. As usual, to work in a $L^p$ context the nonlinearity is assumed to be Lipschitz continuous with respect to $u$. We introduce the measure $\nu$ in $I\times \R^d$, defined by
$$\nu(J\times \mathcal O):= \int_J\mu_t(\mathcal O)\,dt,$$
on Borel sets $J\subset I$, $\mathcal{O}\subset \Rd$ and canonically extended to the Borel sets of $I\times \R^{d}$. For every $f\in L^p(\R^d, \mu_s)$ we prove existence in the large and uniqueness of a solution $u$ to \eqref{C_P_intro} belonging to  $L^p((s, \tau)\times \R^d, \nu)$, for every $\tau >s$. Moreover,   $\sup_{s<t<\tau}\|u(t, \cdot)\|_{L^p(\R^d, \mu_t)} <\infty$.

Note that \eqref{C_P_intro} cannot be seen as an evolution equation in a fixed $L^p$ space, because our spaces $L^p(\R^d, \mu_t)$ may depend explicitly on $p$.

In Section 5 we turn to global estimates, asymptotic behavior and summability improving results. Assuming that $\psi(t,0)=0$ for every $t$, we prove a nonautonomous version of the principle of linearized stability in the space $C_b(\R^d)$. In addition, under a   dissipativity assumption on $\psi$,
$$\xi\,\psi(t,\xi)\le \psi_0\, \xi^2,\qquad\;\, t \in I,\,\xi \in \R,$$
with $\psi_0\in \R$, we prove that for every $f\in C_b(\R^d)$, the solution $u$ to \eqref{C_P_intro} satisfies
\begin{equation}\label{parma}
|u(t,x)|\leq e^{\psi_0(t-s)}\|f\|_{\infty}, \quad t>s, \;x\in \R^d.
\end{equation}
So, the null solution is globally stable if $\psi_0=0$, exponentially globally stable if $\psi_0<0$. The same assumption, together with some technical assumptions on the growth of the coefficients as $|x|\to \infty$, allows to prove a similar result in our $L^p$ context:
 for every $f\in L^p(\R^d, \mu_s)$, the solution $u$ to \eqref{C_P_intro} satisfies
\begin{equation}\label{sabato}
\|u(t,\cdot )\|_{L^p(\R^d, \mu_t)}\leq e^{\psi_0(t-s)}\|f\|_{L^p(\R^d, \mu_s)}, \quad t>s.
\end{equation}
If the measures $\mu_t$ satisfy a uniform logarithmic Sobolev type inequality with constant $K$,
\begin{equation}\label{LogSob}
\int_{\Rd} |g|^\gamma\log |g| \,d\mu_r\le  \|g\|_{L^\gamma(\Rd,\mu_r)}^\gamma \log\|g\|_{L^\gamma(\Rd,\mu_r)} + \gamma K\int_{\{g\neq 0\}}\!\!\!\!|g|^{\gamma-2}|\nabla g|^2 d\mu_r,
\end{equation}
for any $r\in I$, $g\in C^1_b(\R^d)$ and $\gamma\in (1,+\infty)$, then estimate \eqref{sabato} can be improved as follows,
\begin{equation}\label{parma1}
\|u(t,\cdot )\|_{L^{p(t)}(\R^d, \mu_t)}\leq e^{\psi_0(t-s)}\|f\|_{L^p(\R^d, \mu_s)},\quad t>s,
\end{equation}
where  $p(t) := e^{\eta_0K^{-1}(t-s)}(p-1)+1$, $\eta_0$ being the ellipticity constant.   So, we get a hypercontractivity property that is similar to the linear case (\cite{AngLorLun}) if $\psi_0=0$, hypercontractivity plus exponential decay if $\psi_0<0$.

Note that estimates \eqref{parma}, \eqref{sabato} and \eqref{parma1} are significant also if $\psi_0>0$.

Several examples of operators ${\mathcal A}(t)$ that satisfy our assumptions are in the papers \cite{KunLorLun09Non,AngLorLun} to which we refer for detailed proofs. In particular, we allow for time dependent Ornstein--Uhlenbeck operators
$${\mathcal A}(t)\zeta(x) =  \sum_{i,j=1}^d q_{ij}(t)D_{ij}\zeta(x) + \sum_{i, j=1}^d (b_{ij}(t)x_j+f_i(t))D_i \zeta(x)$$
with bounded and locally H\"older continuous $q_{ij}$, $b_i$, $f_i$ and uniformly elliptic diffusion part. In this case, we can take $\varphi(x) = |x|^2 $ if the matrices
$[b_{ij}(t)]_{i,j=1, \ldots ,d}$ are uniformly negative definite; the tight evolution system of measures is explicit and it consists of suitable Gaussian measures depending on $t$. See \cite{GeisLun08}, where the evolution operator $G(t,s)$ for nonautonomous  Ornstein--Uhlenbeck equations and the associated evolution systems of measures were studied under weaker assumptions than the present ones. This is the only nontrivial case such that the measures $\mu_t$ are explicitly known. In the other cases, several properties of the measures $\mu_t$ were proved in the above mentioned papers \cite{KunLorLun09Non,AngLorLun} and in \cite{LorLunZam10}, that dealt with  the time periodic case $q_{ij}(t+T, x)= q_{ij}(t, x)$, $b_{i}(t+T, x)= b_{i}(t, x)$. In that case,  the tight evolution system of measures is also $T$-periodic. Sufficient conditions for the occurrence of the logarithmic Sobolev  inequalities \eqref{LogSob} are in \cite{AngLorLun}.

\subsection*{Notations}

For $k\ge 0$, $d \ge 1$, by $C^k_b(\Rd)$ we mean the space of the functions  in $C^k(\Rd)$ which are bounded
together with all their derivatives up to the $[k]$-th order.
$C^k_b(\Rd)$ is endowed  with the norm $\|f\|_{C_b^k(\Rd)}=\sum_{|\alpha|\le k}\|D^\alpha f\|_{\infty}+\sum_{|\alpha|=[k]}|D^\alpha f|_{C_b^{k-[k]}(\Rd)}$ where $\|\cdot\|_\infty$ and $[k]$ denote respectively the sup-norm and the integer part of $k$.
When $k\notin \N$, we use the subscript ``loc'' to denote the space of all $f\in C^{[k]}(\Rd)$ such that $D^{[k]}f$ is $(k-[k])$- H\"older continuous in any compact subset of $\Rd$.
The space of bounded Lipschitz continuous functions is denoted by ${\rm Lip}_b(\Rd)$ and equipped with the norm $\|\cdot\|_{\infty}+|\cdot|_{{\rm Lip}(\Rd)}$.

For any interval $J\subset \R$,  $C^{\alpha/2,\alpha}(J\times \Rd)$ ($\alpha\in (0,1)$) denotes the usual parabolic H\"older space and the
subscript ``loc'' has the same meaning as above.

All these functional spaces are also used when $\Rd$ is replaced by any open set $\mathcal O \subset \Rd$, with  the same meaning as in the whole space.

We use the symbols $D_tf$, $D_i f$ and $D_{ij}f$ to denote respectively the time derivative $\frac{\partial f}{\partial t}$ and the spatial derivatives $\frac{\partial f}{\partial x_i}$ and $\frac{\partial^2f}{\partial x_i\partial x_j}$ for any $i,j=1, \ldots,d$. The gradient of $f$ is denoted by $
\nabla f$ and the Hessian matrix by $D^2f$.

We denote by $\textrm{Tr}(Q)$ and $\langle x,y\rangle$ the trace of the square matrix
$Q$ and the Euclidean scalar product
of the vectors $x,y\in\Rd$, respectively. The open ball in $\Rd$ centered at $ 0$ with radius $r>0$ and its closure are denoted by  $B_r$ and $\overline{B}_r$, respectively.
For any measurable set $A$ we denote by $\chi_A$ the characteristic function of $A$.

The integral over $\R^d$ of a function $f$ with respect to a measure $\mu$ will be denoted by $\int_{\R^d} f\,d\mu$ or by $\int_{\R^d} f(x)\, \mu(dx)$.

\section{Assumptions and preliminary results}
Let $I$ be either an open right-halfline, or    $I=\R$. Let $\mathcal{A}(t)$ be a family of linear second order differential
operators defined   by
\begin{align}\label{oper}
(\mathcal{A}(t)\zeta)(x)&=\sum_{i,j=1}^d q_{ij}(t,x)D_{ij}\zeta(x)+
\sum_{i=1}^d b_i(t,x)D_i\zeta(x)\\
&= \textrm{Tr}(Q(t,x)D^2\zeta(x))+ \langle b(t,x), \nabla \zeta(x)\rangle,\qquad\;\,t \in I,\,\, x\in \Rd.\nnm
\end{align}

Our standing assumptions on the coefficients of the operators $\A(t)$ are listed   below.

\begin{hyp}\label{base}
\begin{enumerate}[\rm (i)]
\item
$q_{ij}, b_{i}\in C^{\alpha/2,\alpha}_{\rm loc}(I\times \Rd)$ $(i,j=1,\dots,d)$ for
some $\alpha \in (0,1)$;
\item
for every $(t,x)\in I \times \Rd$, the matrix $Q(t,x)=[q_{ij}(t,x)]_{ij}$
is symmetric and uniformly positive definite, i.e.,
\begin{equation*}
\langle Q(t,x)\xi,\xi\rangle\geq \eta(t,x)|\xi|^2,\qquad\;\, (t,x)\in  I \times \Rd, \xi \in \Rd,
\end{equation*}
for some function $\eta:I\times\Rd\to \R$ such that
$$\inf_{(t,x)\in I \times \Rd}\eta(t,x)=\eta_0>0;$$

\item
there exists   $\varphi\in C^2(\R^d)$ with nonnegative values such that
\begin{equation*}
\;\;\;\;\;\qquad\lim_{|x|\to +\infty}\varphi(x)=+\infty \quad\textrm{and}\quad
(\mathcal{A}(t)\varphi)(x)\leq a-c\,\varphi(x),  \quad (t,x)\in I\times \Rd,
\end{equation*}
for some positive constants $a$ and $c$.
\end{enumerate}
\end{hyp}

\noindent
Under Hypothesis \ref{base}  it is possible to define a Markov evolution operator $\{G(t,s):\, t\geq s\in I\}$ in $C_b(\Rd)$  associated to the equation $D_t u=\mathcal{A}(t)u $, see \cite{KunLorLun09Non}. Here we recall its main properties.
For every $f\in C_b(\Rd)$ and any $s \in I$, the function $(t,x)\mapsto (G(t,s)f)(x)$ belongs to  $C_b([s,+\infty)\times \Rd)\cap C^{1,2}((s,+\infty)\times\Rd)$
and it is the unique bounded classical solution of the the Cauchy problem
\begin{eqnarray*}
\left\{
\begin{array}{lll}
D_tu(t,x)=\mathcal{A}(t)u(t,x), &\quad (t,x)\in(s,+\infty)\times\Rd,\\[1mm]
u(s,x)=f(x).
\end{array}
\right.
\end{eqnarray*}
We have
\begin{equation}\label{est_uni}
\|G(t,s)f\|_{\infty}\le \|f\|_{\infty},\qquad\,\,t\in (s,+\infty),\quad f\in C_b(\Rd),
\end{equation}
\noindent
and for any $s\in I$, $t>s$ and every $x \in \Rd$ there exists a unique Borel probability measure $p(t,s,x,\cdot)$ such that
\begin{equation}
(G(t,s)f)(x)=\int_{\Rd}f(y)p(t,s,x,dy),\quad f\in C_b(\Rd).
\label{baglioni-0}
\end{equation}
Moreover for each bounded interval $J \subset I$ and for any $r>0$ the family of the measures $\{p(t,s,x,dy):\, t, \,s\in J, \; t>s, \;x\in  B_r\}$ is tight, i.e., for any $\varepsilon>0$ there exists $\rho>0$ such that $p(t,s,x,\Rd\setminus B_\rho)\le \varepsilon$ for any $t>s\in J$, $x\in  B_r $ (\cite[Lemma 3.5]{KunLorLun09Non}).

\noindent
By \cite[Thm 5.4]{KunLorLun09Non}, there exists an evolution system of measures $\{\mu_t: t\in I\}$   for $G(t,s)$, i.e., for any $t \in I$, $\mu_t$ is a Borel probability measure and
\begin{equation*}
\int_{\Rd}(G(t,s)f)(x)\, \mu_t(dx)=\int_{\Rd}f(x)\,\mu_s(dx),\quad t>s, \;f\in C_b(\R^d).
\end{equation*}
 The Lyapunov function $\varphi$ of Hypothesis \ref{base} belongs to $L^1(\Rd, \mu_t)$ for any $t \in I$ and there exists a positive constant $M_\varphi$ such that
\begin{equation}
\label{nor_fi}
\int_{\Rd}\varphi(x)\mu_t(dx)\le M_\varphi, \qquad\; \, t \in I.
\end{equation}

The measures $\mu_t$ enjoy the following weak  continuity property,

\begin{lemm}
\label{continuita'}
For every $f\in C_b(\R^d)$, the function $t\mapsto \int_{\R^d}f\,d\mu_t$ is continuous in $I$.
\end{lemm}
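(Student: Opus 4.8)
The plan is to fix $t_0\in I$ and prove right‑ and left‑continuity of $F(t):=\int_{\Rd}f\,d\mu_t$ at $t_0$ separately, after two preliminary reductions. First I would record that the family $\{\mu_t:t\in I\}$ is \emph{uniformly tight}: since $\varphi\ge 0$, $\varphi(x)\to+\infty$ as $|x|\to+\infty$ and $\int_{\Rd}\varphi\,d\mu_t\le M_\varphi$ for every $t$ by \eqref{nor_fi}, Chebyshev's inequality gives $\mu_t(\Rd\setminus B_\rho)\le M_\varphi\big(\inf_{|x|\ge\rho}\varphi(x)\big)^{-1}$ for all $t\in I$ and $\rho>0$, a bound independent of $t$ that vanishes as $\rho\to+\infty$. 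From this, whenever $\phi_n\to f$ with $\sup_n\|\phi_n\|_\infty<+\infty$ and $\phi_n\to f$ uniformly on compact sets, one has $\int_{\Rd}\phi_n\,d\mu_t\to F(t)$ uniformly in $t$; choosing $\phi_n\in C_c^\infty(\Rd)$ built from $f$ by a cut‑off and a mollification, and using that a uniform limit of continuous functions is continuous, this reduces the lemma to the case $f=\phi\in C_c^\infty(\Rd)$.

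For such $\phi$ I would use the defining identity of $\{\mu_t\}$ to rewrite $F(t)-F(t_0)$ as a single integral. For $t>t_0$ the identity with $s=t_0$ gives
$$F(t)-F(t_0)=\int_{\Rd}\phi\,d\mu_t-\int_{\Rd}G(t,t_0)\phi\,d\mu_t=\int_{\Rd}\big(\phi-G(t,t_0)\phi\big)\,d\mu_t,$$
while for $\sigma_0\le t<t_0$, with $\sigma_0\in I$, $\sigma_0<t_0$ fixed, the identity with $s=t$ gives
$$F(t)-F(t_0)=\int_{\Rd}G(t_0,t)\phi\,d\mu_{t_0}-\int_{\Rd}\phi\,d\mu_{t_0}=\int_{\Rd}\big(G(t_0,t)\phi-\phi\big)\,d\mu_{t_0}.$$
Right‑continuity then follows easily: given $\eps>0$, choose $\rho$ with $\mu_t(\Rd\setminus B_\rho)\le\eps$ for all $t$; since $(t,x)\mapsto(G(t,t_0)\phi)(x)$ is continuous on $[t_0,+\infty)\times\Rd$ with $G(t_0,t_0)\phi=\phi$, it is uniformly continuous on $[t_0,t_0+1]\times\overline{B}_\rho$, so splitting the first integral over $B_\rho$ and its complement and using $\|\phi-G(t,t_0)\phi\|_\infty\le 2\|\phi\|_\infty$ yields $|F(t)-F(t_0)|\le\sup_{\overline{B}_\rho}|\phi-G(t,t_0)\phi|+2\|\phi\|_\infty\eps$ for $t_0<t<t_0+1$; letting $t\downarrow t_0$ and then $\eps\downarrow 0$ settles this case.

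For left‑continuity, since $|G(t_0,t)\phi-\phi|\le 2\|\phi\|_\infty\in L^1(\Rd,\mu_{t_0})$ and $\mu_{t_0}$ is a probability measure, dominated convergence reduces everything to showing $\|G(t_0,t)\phi-\phi\|_\infty\to 0$ as $t\uparrow t_0$. To prove this I would set $w(r,\cdot):=G(r,t)\phi-\phi$ for $r\in[t,t_0]$: since $v(r,\cdot):=G(r,t)\phi$ solves $D_rv=\A(r)v$ with $v(t,\cdot)=\phi$ and $\A(r)$ has no zero‑order term, $w$ is a bounded classical solution of $D_rw=\A(r)w+\A(r)\phi$ with $w(t,\cdot)=0$. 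Because $\phi\in C_c^\infty(\Rd)$, the forcing term $\A(r)\phi$ vanishes outside $\supp\phi$ and satisfies $|\A(r)\phi|\le M$ there, with $M$ a finite constant depending only on $\phi$ and on $\sup_{[\sigma_0,t_0]\times\supp\phi}(|Q|+|b|)$ (finite by continuity of the coefficients on compact sets). Comparing $w$ with the functions $(r,x)\mapsto\pm M(r-t)$, which are bounded solutions of $D_rW=\A(r)W\pm M$ on $[t,t_0]\times\Rd$ vanishing at $r=t$, the maximum principle available under Hypothesis \ref{base} (the same one responsible for the uniqueness of the bounded solution recalled above) yields $\|w(r,\cdot)\|_\infty\le M(r-t)$ on $[t,t_0]$; in particular $\|G(t_0,t)\phi-\phi\|_\infty\le M(t_0-t)\to 0$ as $t\uparrow t_0$, which together with the previous step completes the proof.

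I expect the left‑continuity step to be the main obstacle: the backward‑in‑time continuity $\|G(t_0,t)\phi-\phi\|_\infty\to 0$ as $t\uparrow t_0$ does not follow from the recalled regularity of $G(\cdot,s)$, in which the initial time $s$ is frozen, so it must be obtained by an independent argument. Reducing first to $\phi\in C_c^\infty(\Rd)$ is exactly what makes the elementary comparison above work, since it forces $\A(r)\phi$ to be globally bounded; without that reduction one would instead have to combine interior parabolic Schauder estimates for $(r,x)\mapsto(G(r,t)\phi)(x)$, uniform for $t$ in a left neighbourhood of $t_0$, with the tightness of the transition kernels $p(t_0,t,x,\cdot)$ recalled above, in order to control $G(t_0,t)\phi-\phi=\int_t^{t_0}\A(r)G(r,t)\phi\,dr$.
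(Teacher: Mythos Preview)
Your argument is correct. The reduction to $\phi\in C_c^\infty(\Rd)$ via the uniform tightness coming from \eqref{nor_fi} is clean, the two one–line identities for $F(t)-F(t_0)$ using the evolution system property are right, and the left–continuity step---which you correctly identify as the nontrivial part---goes through: with $\phi\in C_c^\infty(\Rd)$ the function $w(r,\cdot)=G(r,t)\phi-\phi$ is a bounded classical solution of $D_rw=\A(r)w+\A(r)\phi$ on $(t,t_0]$, continuous down to $r=t$ with $w(t,\cdot)=0$, and the comparison with $\pm M(r-t)$ follows from the same Lyapunov–function maximum principle (Hypothesis~\ref{base}(iii)) that yields uniqueness of bounded solutions; this gives the quantitative bound $\|G(t_0,t)\phi-\phi\|_\infty\le M(t_0-t)$. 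An equivalent route, avoiding the maximum principle, is to note that $w(t_0,\cdot)=\int_t^{t_0}G(t_0,\sigma)(\A(\sigma)\phi)\,d\sigma$ and use \eqref{est_uni}.

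As for comparison with the paper: the paper does not actually prove the lemma here but simply refers to \cite[Cor.~2.3]{LorLunZam10}, remarking that the argument given there in the time–periodic setting carries over. Your proof is therefore a self–contained substitute for that citation; the ingredients you use (tightness of $\{\mu_t\}$, the invariance identity, and continuity of $(t,x)\mapsto G(t,s)f(x)$) are exactly the ones available in the cited paper as well, so the approaches are almost certainly the same in spirit.
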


The proof given in \cite[Cor. 2.3]{LorLunZam10}, that deals with  the time periodic case, works as well in this general case.

The invariance property of the measures  $\{\mu_t\}$, the integral representation formula \eqref{baglioni-0}  and the density of $C_b(\Rd)$ in  $L^p(\Rd, \mu_s)$ for every $s \in I$  (\cite[Lemma 2.5]{AngLorLun}), allow to extend $G(t,s)$ to   $L^p(\R^d, \mu_s) $, see e.g. \cite[p. 2054]{AngLorLun}. Such  extension, still denoted by $G(t,s)$, is a contraction  from $L^p(\Rd, \mu_s)$ to $L^p(\Rd, \mu_t)$, that is,
\begin{equation*}
\|G(t,s)f\|_{L^p(\Rd,\mu_t)}\le \|f\|_{L^p(\Rd,\mu_s)},\qquad\;\, t>s,\,f\in L^p(\Rd,\mu_s).
\end{equation*}
\medskip

Hypothesis \ref{base} is enough to prove continuity properties of mild solutions to linear Cauchy problems, that will be used in the nonlinear case.
For any $[a,b]\subset I$ and any $g \in C_b((a,b)\times \Rd)$, we consider the   function
\begin{equation}
\label{convolution}
v(t,x):=\int_a^t (G(t,r)g(r,\cdot))(x)\,dr,\qquad\;\, t\in [a,b],\, x \in \Rd .
\end{equation}

\begin{lemm}\label{cont}
Let Hypothesis \ref{base} hold. For any $[a,b]\subset I$ and any $g \in C_b([a,b]\times \Rd)$, the function $\{ (t,r):\;a\leq r\leq t \leq b\}\times \R^d\mapsto \R$, $(t,r,x)\mapsto (G(t,r)g(r, \cdot))(x)$ is continuous and bounded.
\end{lemm}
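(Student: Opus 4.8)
The plan is to establish joint continuity of $(t,r,x)\mapsto (G(t,r)g(r,\cdot))(x)$ on the closed simplex $\{a\le r\le t\le b\}\times\R^d$ by combining the known joint continuity of the linear evolution operator on its domain of parameters with the continuity of $r\mapsto g(r,\cdot)$ in the sup-norm. Boundedness is immediate from the contractivity estimate \eqref{est_uni}, since $\|G(t,r)g(r,\cdot)\|_\infty\le\|g(r,\cdot)\|_\infty\le\|g\|_{C_b([a,b]\times\Rd)}$ uniformly in $a\le r\le t\le b$; so the real content is continuity.

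First I would reduce to the case $r<t$ strictly together with a separate check of the diagonal $r=t$: when $r=t$ one has $G(t,t)=\mathrm{Id}$, so $(G(t,r)g(r,\cdot))(x)=g(t,x)$, and continuity of $g$ on $[a,b]\times\Rd$ handles neighbourhoods of diagonal points after noting $\|G(t,r)g(r,\cdot)-g(t,\cdot)\|_\infty\le\|G(t,r)g(r,\cdot)-G(t,r)g(t,\cdot)\|_\infty+\|G(t,r)g(t,\cdot)-g(t,\cdot)\|_\infty$, where the first term is controlled by $\|g(r,\cdot)-g(t,\cdot)\|_\infty\to 0$ by uniform continuity of $g$ on the compact set $[a,b]\times\overline B_R$ combined with tightness, and the second tends to $0$ as $r\to t^-$ because $(t,x)\mapsto(G(t,r)h)(x)$ solves the Cauchy problem with datum $h=g(t,\cdot)$ and is continuous up to $t=r$. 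For the off-diagonal part, fix $(t_0,r_0,x_0)$ with $r_0<t_0$ and write, for $(t,r,x)$ nearby,
\begin{align*}
(G(t,r)g(r,\cdot))(x)-(G(t_0,r_0)g(r_0,\cdot))(x_0)
&=\big[(G(t,r)g(r,\cdot))(x)-(G(t,r)g(r_0,\cdot))(x)\big]\\
&\quad+\big[(G(t,r)g(r_0,\cdot))(x)-(G(t_0,r_0)g(r_0,\cdot))(x_0)\big].
\end{align*}
The first bracket is bounded in absolute value by $\|g(r,\cdot)-g(r_0,\cdot)\|_\infty$, which goes to $0$ by uniform continuity of $g$ (using tightness of the measures $p(t,r,x,dy)$ on the relevant compact parameter range to reduce the sup over $\Rd$ to a sup over a large ball). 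The second bracket tends to $0$ because, with the fixed datum $h:=g(r_0,\cdot)\in C_b(\Rd)$, the map $(t,r,x)\mapsto(G(t,r)h)(x)$ is continuous on $\{a\le r<t\le b\}\times\Rd$: this is a standard property of the evolution operator built in \cite{KunLorLun09Non}, following from the interior Schauder estimates for the solution of $D_tu=\A(t)u$ and the uniqueness/stability of bounded solutions, which give local equicontinuity in all three variables.

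The step I expect to be the main obstacle is making the passage from the sup-norm over $\R^d$ to a sup over a compact ball fully rigorous, i.e.\ controlling the "tail" contribution $\sup_x\int_{\Rd\setminus B_\rho}|g(r,y)-g(r_0,y)|\,p(t,r,x,dy)$: a priori $\|g(r,\cdot)-g(r_0,\cdot)\|_\infty$ need not be small without uniform continuity of $g$ in $r$ uniformly in $x$, which we do have since $g\in C_b([a,b]\times\Rd)$ only guarantees continuity, not uniform continuity, on the noncompact set $[a,b]\times\Rd$. Here the tightness of $\{p(t,r,x,dy):a\le r<t\le b,\ x\in\overline B_R\}$ from \cite[Lemma 3.5]{KunLorLun09Non} is exactly what is needed: choose $\rho$ so that $p(t,r,x,\Rd\setminus B_\rho)\le\varepsilon$ uniformly, split the integral, bound the tail by $2\|g\|_\infty\,\varepsilon$, and bound the main part by $\|g(r,\cdot)-g(r_0,\cdot)\|_{C(\overline B_\rho)}$, which does tend to $0$ by uniform continuity of $g$ on the compact set $[a,b]\times\overline B_\rho$. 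Once this localization device is in place the remaining estimates are routine, and the proof closes.
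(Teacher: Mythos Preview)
Your proposal is correct and matches the paper's proof essentially line for line: the same two-term splitting at a point $(t_0,r_0,x_0)$, the same tightness argument (the paper also invokes \cite[Lemma~3.5]{KunLorLun09Non}) to replace control over $\R^d$ by control over a large ball $B_\rho$, and the same appeal to the joint continuity of $(t,r,x)\mapsto(G(t,r)h)(x)$ for fixed $h\in C_b(\R^d)$ (the paper cites \cite[Thm.~3.7]{KunLorLun09Non} for this). The only imprecision is in your middle paragraph, where bounding the first bracket by $\|g(r,\cdot)-g(r_0,\cdot)\|_\infty$ is correct but unhelpful since that sup-norm need not tend to zero---however, your final paragraph correctly diagnoses and repairs this via the tightness-and-split estimate, which is exactly the argument the paper gives.
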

\begin{proof} Boundedness follows immediately from \eqref{est_uni}. We shall prove continuity in the set  $\Lambda_R:= \{ (t,r):\;a\leq r\leq t \leq b\}\times B_R$, for every $R>0$.

Fix $\eps >0$. By the tightness property of the measures $p(t,r,x,dy)$ there exists $\rho>0$ such that
$$\sup \{p(t,s,x,\Rd\setminus B_\rho):\; a\leq s<t\leq b,\;x \in  B_R\} \le \varepsilon .$$
Moreover there exists $\delta_0>0$ such that for $r_1$, $r_2\in [a,b]$ with $|r_1-r_2|\leq \delta_0$ and for every $y\in B_{\rho}$
we have  $|g(r_1,y)-g(r_2,y)|\leq \varepsilon$.

Fix $(t,r,x)$, $(t_0, r_0, x_0)\in \Lambda_R$, such that $t_0 > r_0$. For   $t>r$  (which is not restrictive, since we will let $t\to t_0$ and $r\to r_0$) we have
$$|(G(t,r)g(r,\cdot))(x)-(G(t_0,r_0)g(r_0,\cdot))(x_0)| \le $$
$$\leq |(G(t,r)(g(r,\cdot) -g(r_0, \cdot)  ))(x)| + |(G(t ,r )g(r_0,\cdot))(x )-(G(t_0,r_0)g(r_0,\cdot))(x_0)|$$
Let us estimate the first addendum. We have
$$\begin{array}{l}
|(G(t,r)(g(r,\cdot)- g(r_0,\cdot))(x)|  \le
\\
\\
\leq \ds \int_{B_\rho}|(g(r,y)-g(r_0,y))|p(t,r,x,dy)  + \int_{\Rd\setminus B_\rho}|g(r,y)-g(r_0,y)|p(t,r,x,dy)
\\
\\
\leq  \sup_{y \in B_\rho}|(g(r,y)-g(r_0,y))| p(t,r,x,B_{\rho})+ 2\|g\|_\infty p(t,r,x,\Rd\setminus B_\rho)
\\
\\
\leq \sup_{y \in B_\rho}|(g(r,y)-g(r_0,y))|  + 2\|g\|_\infty p(t,r,x,\Rd\setminus B_\rho),
\end{array}$$
so that, if  $|r-r_0|\leq \delta_0$,
$$ |(G(t,r)g(r,\cdot))(x)-(G(t,r)g(r_0,\cdot))(x)|\leq \varepsilon +2\|g\|_\infty \varepsilon . $$
Moreover, by   \cite[Thm. 3.7]{KunLorLun09Non}, the function $(t,r,x)\mapsto G(t,r)g(r_0, \cdot)(x)$ is continuous in $\{(t,r):\;  t\geq r\in I\}\times \R^d$. Therefore, there exists $\eta >0$ such that $|(G(t ,r )g(r_0,\cdot))(x )-(G(t_0,r_0)g(r_0,\cdot))(x_0)|\leq \eps$ if $|t-t_0| + |r-r_0| + \|x-x_0\|_{\R^d}\leq \eta$. So, $\lim_{(t,r,x)\to (t_0,r_0,x_0)}(G(t,r)g(r,\cdot))(x)= (G(t_0,r_0)g(r_0,\cdot))(x_0)$.

For $t_0 =r_0$, we have $G(t_0, r_0)=I$. If also $t=r$ we have $G(t,r)=I$ and the statement is reduced to the continuity of $g$ at $(r_0, x_0)$. If $t>r$  we argue as above.
\end{proof}

\begin{prop}\label{weak-con}
Let Hypothesis \ref{base} hold. For any $g \in C_b((a,b)\times \Rd)$, the function $v$ belongs to $C_b([a,b]\times\Rd)$.
\end{prop}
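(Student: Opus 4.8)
The plan is to show that the map $v(t,x)=\int_a^t (G(t,r)g(r,\cdot))(x)\,dr$ is bounded and continuous on $[a,b]\times\R^d$. Boundedness is immediate: by \eqref{est_uni} we have $|(G(t,r)g(r,\cdot))(x)|\le \|g\|_\infty$ for all $a\le r\le t\le b$ and $x\in\R^d$, whence $|v(t,x)|\le (b-a)\|g\|_\infty$. So the real content is continuity, and here the key tool is precisely Lemma \ref{cont}, which asserts that the integrand $\Phi(t,r,x):=(G(t,r)g(r,\cdot))(x)$ is continuous and bounded on the closed set $\{(t,r):a\le r\le t\le b\}\times\R^d$.

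The main step is therefore to pass from continuity of the integrand to continuity of the integral with variable upper limit. Fix $(t_0,x_0)\in[a,b]\times\R^d$ and let $(t,x)\to(t_0,x_0)$ with $(t,x)\in[a,b]\times\R^d$. Write
\[
v(t,x)-v(t_0,x_0)=\int_a^{t_0}\big(\Phi(t,r,x)-\Phi(t_0,r,x_0)\big)\,dr+\int_{t_0}^{t}\Phi(t,r,x)\,dr,
\]
with the usual sign convention when $t<t_0$. The second term is bounded in absolute value by $|t-t_0|\,\|g\|_\infty$ and hence tends to $0$. For the first term I would invoke uniform continuity: since $\Phi$ is continuous on the compact set $\{(t,r):a\le r\le t\le b\}\times\overline{B}_R$ for any fixed $R\ge |x_0|+1$ (note that for the relevant $(t,r,x)$ with $r\le t_0$ we do need $r\le t$, which holds once $t$ is close enough to $t_0$, or one extends $\Phi$ appropriately — this is a small point to handle carefully), $\Phi$ is uniformly continuous there, so $\sup_{a\le r\le t_0}|\Phi(t,r,x)-\Phi(t_0,r,x_0)|\to 0$ as $(t,x)\to(t_0,x_0)$, and the integral over $[a,t_0]$ of length $t_0-a$ then tends to $0$. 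Combining the two estimates gives $v(t,x)\to v(t_0,x_0)$.

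There is one technical wrinkle that deserves attention and is the main obstacle: in the first integral the argument $\Phi(t,r,x)$ is evaluated at pairs $(t,r)$ with $r\le t_0$ but possibly $r>t$ when $t<t_0$, which lies outside the domain of $\Phi$. The cleanest fix is to note that for $r$ in the thin strip $(t,t_0)$ of length at most $|t-t_0|$ the contribution to $v(t,x)$ is already absorbed into the second term, so one should instead split at $\min(t,t_0)$: write $v(t,x)-v(t_0,x_0)=\int_a^{\min(t,t_0)}(\Phi(t,r,x)-\Phi(t_0,r,x_0))\,dr \pm \int_{\min(t,t_0)}^{\max(t,t_0)}\Phi(\cdot,r,\cdot)\,dr$, where in the second integral $\Phi$ is evaluated with the larger of $t,t_0$ as first argument so that $r\le t$ indeed holds. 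Then the uniform continuity argument applies verbatim on the genuine domain $\{a\le r\le \min(t,t_0)\}$, and the proof closes. I would also remark that the same argument, applied on balls of increasing radius, shows $v\in C([a,b]\times\R^d)$ rather than merely separate continuity, and together with the sup bound we conclude $v\in C_b([a,b]\times\R^d)$.
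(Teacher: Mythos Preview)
Your argument is correct and uses the same splitting as the paper (which treats the cases $t\ge t_0$ and $t\le t_0$ separately, amounting exactly to your split at $\min(t,t_0)$). The one substantive difference is in how the main integral $\int_a^{\min(t,t_0)}(\Phi(t,r,x)-\Phi(t_0,r,x_0))\,dr$ is shown to vanish: you invoke Lemma~\ref{cont} for joint continuity of $\Phi$ on $\{a\le r\le t\le b\}\times\overline{B}_R$ and then use uniform continuity on this compact set, whereas the paper uses only the continuity of $(t,x)\mapsto(G(t,r)g(r,\cdot))(x)$ for each fixed $r\in(a,b)$ together with the Dominated Convergence Theorem (dominating function $2\|g\|_\infty$). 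Your route is slightly cleaner once Lemma~\ref{cont} is in hand; the paper's route requires a weaker input and in particular works directly under the stated hypothesis $g\in C_b((a,b)\times\Rd)$, while Lemma~\ref{cont} is formulated for $g\in C_b([a,b]\times\Rd)$ --- a minor mismatch in your argument, easily patched by excising an $\varepsilon$-slab near $r=a$ at cost $\varepsilon\|g\|_\infty$.
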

\begin{proof}
By estimate \eqref{est_uni},  $v$ is bounded in $[a,b]\times \Rd$. Let us prove that it is continuous. For $a \le t_0 \le t\le b$ and $x$, $x_0 \in \Rd$ we have
$$\begin{array}{ll}
|v(t, x)-v(t_0, x_0)|&\le \ds \left|\int_a^t (G(t,r)g(r,\cdot))(x)\, dr-\int_a^{t_0} (G(t,r)g(r,\cdot))(x)\, dr\right|
\\
\\
&+\ds \left|\int_a^{t_0} (G(t,r)g(r,\cdot))(x)\, dr-\int_a^{t_0} (G(t_0,r)g(r,\cdot))(x_0)\, dr\right|
\\
\\
& \ds \le \int_{t_0}^{t} |(G(t,r)g(r,\cdot))(x)|\, dr
\\
\\
&\ds +\int_a^{t_0} \left|(G(t,r)g(r,\cdot))(x)-(G(t_0,r)g(r,\cdot))(x_0)\right|\, dr.
\end{array}$$
By \eqref{est_uni}, the first integral does not exceed $(t-t_0)\|g\|_{\infty}$. Since for every $r\in(a,b)$ the function $(t,x)\mapsto G(t,r)g(r, \cdot)(x)$ is continuous, then
$$\lim_{(t,x)\to(t_0, x_0)} |(G(t,r)g(r,\cdot))(x)-(G(t_0,r)g(r,\cdot))(x_0)|=0.$$
Moreover, $|(G(t,r)g(r,\cdot))(x)-(G(t_0,r)g(r,\cdot))(x_0)|\leq 2\|g\|_{\infty}$, still by \eqref{est_uni}. By the Dominated Convergence Theorem, the second integral
 vanishes as $(t,x)\to (t_0,x_0)$ with $t\geq t_0 $. Then,  $|v(t,x)-v(t_0,x_0)|$ tends to $0$ as $(t,x)\to {(t_0,x_0)}$ with $t \ge t_0$.
Arguing similarly in the case $t\le t_0$ we get the claim.
\end{proof}

As in Proposition \ref{weak-con}, throughout the paper we shall deal with functions belonging to $C_b(J\times \Rd)$, where $J$ is an interval. Note that if $h\in C_b(J\times \Rd)$, the function $t\mapsto h(t, \cdot)$ is not necessarily continuous with values in $C_b(\R^d)$, as well as $t\mapsto \|h(t, \cdot)\|_{\infty}$. However, the latter function is measurable, as the next lemma shows.

\begin{lemm}\label{misu}
Let $J\subset \R$ be an interval. Then, for any continuous and bounded function $h:J\times \Rd\to \R$, the map $t\mapsto \|h(t, \cdot)\|_\infty$ belongs to $L^{\infty} (J)$.
\end{lemm}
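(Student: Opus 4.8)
The plan is to reduce the claim to a measurability statement about a countable family of continuous functions. Since $h$ is continuous on $J\times\Rd$ and bounded, say $\|h\|_\infty=:M<\infty$, the quantity $\|h(t,\cdot)\|_\infty=\sup_{x\in\Rd}|h(t,x)|$ is a well-defined finite number for every $t\in J$, and it is obviously bounded by $M$. So the only issue is measurability of the map $t\mapsto\|h(t,\cdot)\|_\infty$; once that is established, membership in $L^\infty(J)$ is immediate.

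The key step is to replace the supremum over the uncountable set $\Rd$ by a supremum over a countable dense subset. Let $\{x_n:n\in\N\}$ be a countable dense subset of $\Rd$ (e.g.\ $\Q^d$). For each fixed $t$, continuity of $h(t,\cdot)$ gives
$$
\|h(t,\cdot)\|_\infty=\sup_{x\in\Rd}|h(t,x)|=\sup_{n\in\N}|h(t,x_n)|,
$$
because every value $|h(t,x)|$ is approximated by values $|h(t,x_n)|$ with $x_n\to x$. Now for each fixed $n$, the map $t\mapsto h(t,x_n)$ is continuous on $J$ (it is the restriction of the continuous function $h$ to the line $J\times\{x_n\}$), hence Borel measurable, hence so is $t\mapsto|h(t,x_n)|$. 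A countable supremum of measurable functions is measurable, so $t\mapsto\|h(t,\cdot)\|_\infty=\sup_{n}|h(t,x_n)|$ is measurable on $J$. Combined with the uniform bound by $M$, this shows $t\mapsto\|h(t,\cdot)\|_\infty\in L^\infty(J)$.

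I do not expect any real obstacle here: the statement is a routine measure-theoretic fact. The one point that deserves to be written carefully is the identity $\sup_{x\in\Rd}|h(t,x)|=\sup_{n\in\N}|h(t,x_n)|$, which uses continuity of $h$ in the $x$-variable for each fixed $t$; the inequality $\geq$ is trivial and the inequality $\leq$ follows by taking, for given $x$ and $\eps>0$, an $x_n$ close enough to $x$ that $|h(t,x)|\le|h(t,x_n)|+\eps$. Everything else — countable sup of measurable functions is measurable, continuity implies measurability — is standard.
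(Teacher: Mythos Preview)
Your proof is correct and follows essentially the same idea as the paper's: both establish measurability by expressing $\|h(t,\cdot)\|_\infty$ as a supremum of continuous functions of $t$. The only difference is cosmetic --- the paper first shows that $t\mapsto\|h(t,\cdot)\|_{L^\infty(B_r)}$ is continuous for each fixed $r>0$ (via uniform continuity of $h$ on compact sets) and then takes the supremum over $r$, whereas you take the supremum directly over a countable dense set of points; your route is slightly more direct.
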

\begin{proof}
First we notice that for any $r>0$, the function $t\mapsto\|h(t,\cdot)\|_{L^\infty(B_r)}$ is continuous in $J$. Indeed, for any $t, t_0 \in J$, we have
\begin{equation}\label{combi}
\left|\|h(t,\cdot)\|_{L^\infty(B_r)}-\|h(t_0, \cdot)\|_{L^\infty(B_r)}\right|\le \|h(t,\cdot)-h(t_0,\cdot)\|_{L^\infty(B_r)}
\end{equation}
and the right hand side of  \eqref{combi} vanishes as $t\to t_0$, by the uniform continuity of $h$ on compact sets.
On the other hand, since
$$\| h(t, \cdot)\|_{\infty} = \sup_{r>0} \|h(t, \cdot)\|_{L^{\infty}(B_r)},\qquad\;\, t \in J,$$
and the supremum of continuous functions is measurable, then $t\mapsto\|h(t,\cdot)\|_\infty$ is measurable.
\end{proof}

Lemma \ref{misu} will be used to apply an $L^{\infty}$ version of the Gronwall Lemma to $h(t):= \|u(t, \cdot)\|_{\infty}$, where $u$ is the mild solution to \eqref{C_P_intro}. In fact, we will use two variants of the Gronwall Lemma.

\begin{lemm}\label{Gronwall}
Let $  a<b \in \R$.
\begin{itemize}
\item[(i)] Let
$w\in L^{\infty}(a,b)$ be a nonnegative  function, and let $h$, $k\ge 0$ be  such that
$$w(t)\leq k +h\int_a^t  w(s)\,ds,\qquad\;\, a.e.\;t \in [a,b]. $$
Then, $w(t)\le e^{h(t-a)}k$ for a.e. $t\in [a,b]$.

\item[(ii)] Let $w\in C([a,b])$ be a nonnegative function, and let $h\leq 0$ be such that
$$w(t)\leq w(s)  +h\int_s^t  w(r)\,dr,\qquad\;\, a\leq s \leq t\leq b. $$
Then, $w(t)\le e^{h(t-a)}w(a)$ for every  $t\in [a,b]$.
\end{itemize}
\end{lemm}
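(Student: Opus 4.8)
The plan is to prove the two statements separately, since they have different regularity assumptions (measurable vs. continuous) and different sign conventions for $h$.

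For part (i), I would use the standard iteration/fixed-point argument adapted to the $L^\infty$ setting. Set $\Phi(t):=\int_a^t w(s)\,ds$, which is absolutely continuous (hence a.e. differentiable) on $[a,b]$ because $w\in L^\infty(a,b)$, and $\Phi'(t)=w(t)$ for a.e. $t$. The hypothesis reads $\Phi'(t)\le k+h\Phi(t)$ a.e. Multiplying by the integrating factor $e^{-h(t-a)}$ gives $\frac{d}{dt}\bigl(e^{-h(t-a)}\Phi(t)\bigr)\le k\,e^{-h(t-a)}$ a.e.; integrating from $a$ to $t$ (legitimate since the left-hand side is absolutely continuous and $\Phi(a)=0$) yields $e^{-h(t-a)}\Phi(t)\le \frac{k}{h}\bigl(1-e^{-h(t-a)}\bigr)$ if $h>0$, i.e. $\Phi(t)\le \frac{k}{h}\bigl(e^{h(t-a)}-1\bigr)$ (and $\Phi(t)\le k(t-a)$ if $h=0$). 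Plugging this back into $w(t)\le k+h\Phi(t)$ gives $w(t)\le k\,e^{h(t-a)}$ for a.e. $t$. Alternatively, and perhaps cleaner to write, one iterates the inequality $n$ times to get $w(t)\le k\sum_{j=0}^{n-1}\frac{(h(t-a))^j}{j!}+h^n\int_a^t\frac{(t-s)^{n-1}}{(n-1)!}w(s)\,ds$ for a.e. $t$, and the remainder term is bounded by $\|w\|_{L^\infty(a,b)}\frac{(h(b-a))^n}{n!}\to 0$.

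For part (ii), since $w$ is continuous and $h\le 0$, I would argue directly. Fix $s\in[a,b]$ and consider $g(t):=e^{-h(t-s)}\int_s^t w(r)\,dr$ on $[s,b]$; this is $C^1$ with $g'(t)=e^{-h(t-s)}\bigl(w(t)-h\int_s^t w(r)\,dr\bigr)\le e^{-h(t-s)}w(s)$ by hypothesis, so integrating gives $\int_s^t w(r)\,dr\le -\frac{w(s)}{h}\bigl(e^{h(t-s)}-1\bigr)$ when $h<0$ (and $\le w(s)(t-s)$ when $h=0$). Substituting into $w(t)\le w(s)+h\int_s^t w(r)\,dr$ and using $h<0$ to flip the inequality on the integral term yields $w(t)\le w(s)e^{h(t-s)}$ for all $a\le s\le t\le b$. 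Taking $s=a$ gives the claim $w(t)\le e^{h(t-a)}w(a)$.

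The only mild subtlety — not really an obstacle — is the measurability/absolute-continuity bookkeeping in part (i): one must be careful that "$w(t)\le k+h\Phi(t)$ a.e." together with $\Phi$ absolutely continuous is exactly what is needed to run the integrating-factor computation, since $e^{-h(t-a)}\Phi(t)$ is absolutely continuous and its a.e. derivative is $\le k e^{-h(t-a)}$, whence the pointwise inequality for its values follows by the fundamental theorem of calculus for absolutely continuous functions. Everything else is routine. I would likely present part (i) via the iteration argument (which avoids even mentioning absolute continuity, using only that $s\mapsto w(s)$ is integrable and iterating Fubini) and part (ii) via the integrating factor, and note that in both cases the constant $e^{h(t-a)}$ can be replaced by $e^{h(t-s)}$ with the obvious modifications.
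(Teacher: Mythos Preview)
The paper states this lemma without proof, so there is no argument of the paper's own to compare against; your proof of part~(i) is correct in either variant (integrating factor or iteration).

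Part~(ii), however, has a genuine gap. You correctly derive (modulo a sign slip in the displayed formula) an \emph{upper} bound on the integral, namely $\int_s^t w\le \frac{w(s)}{h}\bigl(e^{h(t-s)}-1\bigr)$. But then you multiply by $h<0$, which flips the inequality and yields only the \emph{lower} bound $h\int_s^t w\ge w(s)\bigl(e^{h(t-s)}-1\bigr)$, hence $w(s)+h\int_s^t w\ge w(s)e^{h(t-s)}$. Combined with the hypothesis $w(t)\le w(s)+h\int_s^t w$, this says $w(t)$ lies below a quantity that lies \emph{above} $w(s)e^{h(t-s)}$, and no conclusion on $w(t)$ versus $w(s)e^{h(t-s)}$ follows. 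The difficulty is structural: for $h<0$ one needs a lower bound on $\int_s^t w$, not an upper bound, so the substitution step cannot close.

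One clean repair in the same spirit is to fix $t$ and vary $s$. Set $\Phi(s):=\int_s^t w(r)\,dr$, so $\Phi'(s)=-w(s)$ and $\Phi(t)=0$; the hypothesis rewrites as $\Phi'(s)-h\Phi(s)\le -w(t)$. Multiplying by $e^{-hs}$ and integrating from $a$ to $t$ gives $e^{-ha}\int_a^t w\ge w(t)\int_a^t e^{-hs}\,ds$, and after elementary algebra $h\int_a^t w\le w(t)\bigl(1-e^{-h(t-a)}\bigr)$. Substituting this into $w(t)\le w(a)+h\int_a^t w$ now works in the right direction and yields $w(t)e^{-h(t-a)}\le w(a)$. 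Alternatively, observe that the two-parameter hypothesis gives $D^+w(s)\le hw(s)$ for the right upper Dini derivative, whence $s\mapsto e^{-h(s-a)}w(s)$ is continuous with nonpositive $D^+$ and is therefore nonincreasing.
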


In the following we will need that the local mild solution of the problem \eqref{C_P_intro} with $f \in C_b(\Rd)$, is actually  classical. To this aim we shall use local estimates for the derivatives of $G(t,s)f$.

\begin{prop}
\label{puffpuff}
Let Hypothesis \ref{base} hold. Then for every  $a\in I$, $b>a$,
$R>0$ and $0<\eta \leq 2+\alpha$ there is $C_1= C_1(a,b, R, \eta)>0$ such that for every $f\in C_b(\R^d)$
\begin{equation}
\label{Friedman}
\|(G(t,s)f)_{|\overline{B}_R}\|_{C^{\eta}(\overline{B}_R)} \leq \frac{C_1}{(t-s)^{\eta/2}}\|f\|_{\infty}, \quad a\leq s<t\leq b.
\end{equation}
Moreover, for  $0<\theta \leq 1$, $\theta \leq \eta\leq 2+\alpha $   there is $C_2= C_2(a,b, R, \eta , \theta )>0$ such that for every $f\in C_b(\R^d)$ that is locally $\theta$-H\"older continuous we have
\begin{equation}
\label{noi1}
\|(G(t,s)f)_{|\overline{B}_R}\|_{C^{\eta}(\overline{B}_R)} \leq \frac{C_2}{(t-s)^{(\eta-\theta)/2}}\|f_{|\overline{B}_{R+1}}\|_{C^{\theta}(\overline{B}_{R+1})}, \quad a\leq s<t\leq b.
\end{equation}
\end{prop}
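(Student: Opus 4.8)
The plan is to derive both estimates from the known interior Schauder theory for nonautonomous parabolic equations, applied to the function $u(t,x):=(G(t,s)f)(x)$, which by the properties recalled after Hypothesis \ref{base} is the bounded classical solution of $D_tu=\mathcal{A}(t)u$ on $(s,+\infty)\times\R^d$ with $u(s,\cdot)=f$. The only two inputs needed are: (a) the a priori sup-bound $\|u(\tau,\cdot)\|_\infty\le\|f\|_\infty$ from \eqref{est_uni}, valid for every $\tau\in(s,b]$; (b) the local uniform ellipticity and the $C^{\alpha/2,\alpha}_{\rm loc}$ regularity of the coefficients from Hypothesis \ref{base}, which on a fixed cylinder $[a,b]\times\overline B_{R+2}$ give uniform ellipticity and uniform Hölder bounds on $q_{ij},b_i$ (these constants depend only on $a,b,R$). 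So the constants $C_1,C_2$ will depend only on $a,b,R,\eta$ (and $\theta$), but not on $s,t$ or $f$, once we track the scaling carefully.

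First I would prove \eqref{Friedman}. Fix $a\le s<t\le b$ and set $h:=t-s\in(0,b-a]$. On the cylinder $Q:=(s,t]\times\overline B_{R+1}$, $u$ solves a uniformly parabolic equation with bounded Hölder coefficients and $\|u\|_{L^\infty(Q)}\le\|f\|_\infty$. The difficulty is that the constant in a naive interior Schauder estimate degenerates as $t-s\to 0$; to capture the precise power $(t-s)^{-\eta/2}$ I would rescale. Introduce $v(\sigma,y):=u(s+h\sigma,\,x_0+\sqrt h\,y)$ for a center $x_0\in\overline B_R$, defined for $\sigma\in(0,1]$ and $|y|\le 1$ (using $h\le b-a$ and $\sqrt h\le\sqrt{b-a}$, one checks the arguments stay in $[a,b]\times\overline B_{R+1}$ if $R$ is enlarged by an absolute constant, or more safely by first reducing to $h\le 1$ and treating $h\in(1,b-a]$ by the standard non-degenerate interior estimate). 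Then $v$ solves $D_\sigma v=\tilde{\mathcal A}(\sigma)v$ where $\tilde{\mathcal A}$ has coefficients $\tilde q_{ij}(\sigma,y)=q_{ij}(s+h\sigma,x_0+\sqrt h y)$ and $\tilde b_i(\sigma,y)=\sqrt h\,b_i(s+h\sigma,x_0+\sqrt h y)$; these are uniformly elliptic with the same $\eta_0$ and have Hölder seminorms bounded uniformly in $h\le 1$, $s$, $x_0$ (the drift even improves). Interior parabolic Schauder estimates (e.g.\ \cite{LadSolUra68Lin}) applied on $(1/2,1]\times B_{1/2}$ versus $(0,1]\times B_1$ give $\|v(1,\cdot)\|_{C^{2+\alpha}(B_{1/2})}\le C\|v\|_{L^\infty((0,1]\times B_1)}\le C\|f\|_\infty$ with $C=C(a,b,\eta_0,\text{Hölder bounds})$. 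Undoing the scaling, $D^\beta_x u(t,x_0)=h^{-|\beta|/2}D^\beta_y v(1,0)$ and Hölder quotients of order $\alpha$ pick up a further $h^{-\alpha/2}$, whence $\|u(t,\cdot)\|_{C^\eta(\overline B_R)}\le C h^{-\eta/2}\|f\|_\infty$ for $0<\eta\le 2+\alpha$ by interpolating among the integer orders $|\beta|=0,1,2$ and the top Hölder term; covering $\overline B_R$ by finitely many balls $B_{1/2}(x_0)$ removes the dependence on $x_0$. This proves \eqref{Friedman}.

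For \eqref{noi1} the idea is the same but we exploit Hölder continuity of the datum. Now $f\in C^\theta_{\rm loc}$ with $\|f\|_{C^\theta(\overline B_{R+1})}$ finite, and we want to replace the loss $h^{-\eta/2}$ by $h^{-(\eta-\theta)/2}$. One clean route: first show the analogue of \eqref{Friedman} with $\|f\|_\infty$ replaced by $\|f\|_{C^\theta}$ and exponent $(\eta-\theta)/2$, by the same rescaling argument but now using that the \emph{initial datum} $v(0,\cdot)=f(x_0+\sqrt h\,\cdot)$ has $C^\theta(B_1)$-norm controlled by $\|f\|_\infty+h^{\theta/2}|f|_{C^\theta(\overline B_{R+1})}\le C\|f\|_{C^\theta(\overline B_{R+1})}$ (for $h\le 1$), and invoking the Schauder estimate \emph{up to the initial time} for parabolic problems with $C^\theta$ initial data, which gives $\|v\|_{C^{(\eta-\theta)... }}$ — more precisely one uses the representation $u(t,\cdot)=G(t,\sigma)(G(\sigma,s)f)$ with $\sigma=(s+t)/2$: apply \eqref{Friedman} on $[\sigma,t]$ but now feeding in the bound $\|G(\sigma,s)f\|_{C^\theta(\overline B_{R+1})}\le C(t-s)^{-0}$... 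Here I would instead argue directly: by \eqref{Friedman} with $\eta$ replaced by $\theta$, $\|G(\sigma,s)f\|_{C^\theta(\overline B_{R+1})}$ need not be small, so the better approach is the ``up to $t=s$'' Schauder estimate after rescaling. The main obstacle is precisely this: quoting or re-deriving the correct anisotropic interior-plus-initial Schauder estimate with the sharp time-weight, and checking that all constants produced by the rescaling depend only on $a,b,R,\eta,\theta$ and the structural constants $\eta_0,\alpha$, $\|q_{ij}\|_{C^{\alpha/2,\alpha}([a,b]\times\overline B_{R+2})}$, $\|b_i\|_{C^{\alpha/2,\alpha}([a,b]\times\overline B_{R+2})}$. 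Once that weighted estimate is in hand, undoing the change of variables as above and interpolating between integer orders yields \eqref{noi1}; the remaining range $t-s>1$ (or $>$ some fixed threshold) is handled by the ordinary non-degenerate interior Schauder estimate, absorbing the bounded factor into $C_2$.
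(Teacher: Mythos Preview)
Your argument for \eqref{Friedman} via parabolic rescaling is correct and standard; the paper simply quotes \cite[Thm.~4.6.3]{Fri64Par}, so on this part the two approaches agree in substance.

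For \eqref{noi1}, however, your proposal does not reach a proof. You cycle through three ideas---rescaling combined with an ``up to the initial time'' Schauder estimate for $C^\theta$ data, a bootstrap via $G(t,s)=G(t,\sigma)G(\sigma,s)$ with $\sigma=(s+t)/2$, and then back to rescaling---and you yourself identify the obstacle: you would need an interior-in-space, up-to-initial-time Schauder estimate carrying the sharp weight $(t-s)^{-(\eta-\theta)/2}$, and you neither produce nor cite one. The bootstrap route is circular as written, since the bound $\|G(\sigma,s)f\|_{C^\theta(\overline B_{R+1})}\le C$ with no blow-up as $\sigma\to s$ is precisely the case $\eta=\theta$ of \eqref{noi1}.

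The paper takes a different and more concrete route that sidesteps this difficulty. It localizes by a cutoff $\zeta\in C^\infty_c(\Rd)$ with $\zeta\equiv 1$ on $\overline B_R$ and $\zeta\equiv 0$ outside $\overline B_{R+1}$: the product $u_1:=u\zeta$ solves a Dirichlet problem in $B_{R+1}$ with source $g_1=-u\,\mathcal A(\cdot)\zeta-2\langle Q\nabla_x u,\nabla\zeta\rangle$, so that
\[
u_1(t,\cdot)=U(t,s)(f\zeta)+\int_s^t U(t,r)g_1(r,\cdot)\,dr,
\]
where $U(t,s)$ is the Dirichlet evolution operator on $C(\overline B_{R+1})$. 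The key smoothing estimate $\|U(t,s)\varphi\|_{C^\gamma}\le C(t-s)^{-(\gamma-\beta)/2}\|\varphi\|_{C^\beta}$, for $\varphi$ vanishing on $\partial B_{R+1}$, is established separately in the Appendix via the Acquistapace--Terreni abstract evolution framework and interpolation. The term $U(t,s)(f\zeta)$ then supplies the factor $(t-s)^{-(\eta-\theta)/2}\|f\zeta\|_{C^\theta}$ directly, while the integral term is controlled using \eqref{Friedman} on $g_1$ and contributes only a milder singularity. In short, the paper trades the elusive ``interior plus initial'' Schauder estimate you were seeking for a boundary-value problem in a fixed ball, where the required estimate is available through analytic-semigroup and interpolation methods.
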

\begin{proof} Estimates \eqref{Friedman} follow from \cite[Thm. 4.6.3]{Fri64Par}, taking $D=\overline{B}_{R+1}\times [s,b]$. To prove \eqref{noi1}
we
use similar estimates for parabolic equations in balls, and a standard localization procedure. We fix $R>0$ and we
denote by $U(t,s)$ the evolution operator associated to the family $\mathcal{A}(t)$, with homogeneous Dirichlet boundary condition, in $C(\overline{B}_{R+1})$. For $0\leq \beta \leq 1$, $\beta \leq \gamma \leq 1+\alpha $ there exists $C_3= C_3(a,b, \gamma, \beta, R)$, such that
\begin{equation}
\label{U1}
 \|U(t,s)\varphi \|_{C^{ \gamma}(\overline{B}_{R+1})} \leq \frac{C_3}{(t-s)^{ (\gamma-\beta)/2}}\|\varphi\|_{C^{\beta}(\overline{B}_{R+1})}, \quad a\leq s<t\leq b,
 \end{equation}
for every $\varphi\in C^{\beta}(\overline{B}_{R+1})$ that vanishes at $\partial B_{R+1}$.  Such estimates should be well known; to be complete we give a proof in the Appendix.

For $s\in I$ set
$$u(t,x): = G(t,s)f(x), \quad t\geq s, \;x\in \R^d.$$
Let $\zeta  \in C^{\infty}_c(\R^d)$ be such that $\zeta \equiv 1 $ in $\overline{B}_R$, $\zeta \equiv 0$ outside $\overline{B}_{R+1}$. The function  $u_1(t,x):= u(t,x)\zeta (x)$ satisfies
 $$\left\{\begin{array}{l}
 D_tu_1= \mathcal{A}(t)u_1 - u \mathcal{A}(t)\zeta  - 2\langle Q(t, \cdot)\nabla_xu, \nabla\zeta \rangle, \quad t>s, \;x\in \overline{B}_{R+1},
 \\
 \\
 u_1(s,x) =  f(x)\zeta (x), \quad x\in \overline{B}_{R+1},
\\
\\
u_1(t,x) =0, \quad t\geq s, \;x\in \partial  B_{R+1},
\end{array}\right. $$
and therefore it is given by
\begin{equation}
\label{eq:u}
u_1(t, \cdot) = U(t,s)(f\zeta) + \int_s^t U(t, r)g_1(r, \cdot)dr, \quad t\geq s,
\end{equation}
where
$$g_1(r,\cdot ) = - u(r,\cdot) \mathcal{A}( r)\zeta  - 2\langle Q(r , \cdot)\nabla_xu(r, \cdot), \nabla\zeta \rangle .$$
$g_1$ is continuous in $(s, +\infty)\times \overline{B}_{R+1}$, it vanishes at $(r,x)$ with $x\in \partial B_{R+1}$,
and for fixed $\sigma\in (0, 1)$ by estimates   \eqref{Friedman} there exists $C_4>0$ independent of $f$  such that
\begin{equation}
\label{eq:g}
\|g_1(r, \cdot)\|_{ C^{\sigma}(\overline{B}_{R+1})}\leq C_4\|f\|_{\infty}(r-s)^{-1/2-\sigma/2}, \quad a\leq s<r\leq b.
\end{equation}
Estimate \eqref{U1} with $\gamma=\eta$, $\beta=\theta$ gives
$$(t-s)^{(\eta-\theta)/2}\|U(t,s)(f\zeta)\|_{C^{\eta}( \overline{B}_{R+1})} \leq C_3 \|f\zeta \|_{C^{\theta}( \overline{B}_{R+1})}\leq C_5\|f\|_{C^{\theta}( \overline{B}_{R+1})} $$
with $C_5$ independent of $f$. Now we fix $\sigma \in (0, 1)\cap (\eta -2, \eta)$;
 estimates \eqref{eq:g} and  \eqref{U1} with $\gamma=\eta$, $\beta =\sigma $  yield
$$\begin{array}{l}
\ds \bigg\| \int_s^t U(t, r)g_1(r, \cdot)dr\bigg\|_{C^{\eta}( \overline{B}_{R+1})}\leq C_1\int_s^t \frac{1}{(t-r)^{(\eta-\sigma)/2} } \|g_1(r, \cdot)\|_{ C^{\sigma}(\overline{B}_{R+1})} dr
\\
\\
\leq  C_6(t-s)^{(1-\eta)/2}\|f\|_{\infty} \leq C_6(b-a)^{(1-\theta)/2} (t-s)^{(\theta-\eta)/2}\|f\|_{\infty},\end{array}$$
with $C_6$ independent of $f$. Recalling \eqref{eq:u}, we obtain
$$\|u_1\|_{C^{\eta}( \overline{B}_{R+1})}\leq C_7 (t-s)^{(\theta-\eta)/2}\|f\|_{C^{\theta}( \overline{B}_{R+1})}, $$
for some $C_7$ independent of $f$, and since $(G(t,s)f)(x)= u_1(t,x)$ for $x\in \overline{B}_R$ the statement follows.
\end{proof}

Taking $\eta=1$, \eqref{Friedman} gives local gradient estimates for $G(t,s)f$. Adding the following
  assumptions to the basic Hypothesis \ref{base}   global gradient estimates are available.

\begin{hyp}\label{smooth}
\begin{enumerate}[\rm (i)]
\item The first order spatial derivatives  of the coefficients $q_{ij}$ and $b_i$ ($i,j=1,\dots,d$)  belong to $C^{\alpha/2,\alpha}_{\rm loc}(I \times \Rd)$;
\item
there exists a continuous function $k:I\to [0,+\infty)$ such that
$$|\nabla_x q_{ij}(t,x)|\le k(t) \eta(t,x), \quad\; \, (t,x) \in I\times \Rd,$$
for any $i,j=1, \dots,d$;
\item there exists a continuous function $m:I\to \R$ such that
\begin{equation*}
\langle\nabla_x b(t,x)\xi,\xi\rangle \le m(t) |\xi|^2,\qquad\;\, \xi \in \Rd,\, (t,x) \in I\times \Rd.
\end{equation*}
\end{enumerate}
\end{hyp}

The  following   gradient estimates were proved in \cite[Thm. 4.11]{KunLorLun09Non}.

\begin{prop}\label{uni-es}
Assume that Hypotheses \ref{base} and \ref{smooth} hold. Then, for any $a  \in I$, $b>a$,  there are $K_1= K_1(a,b)$, $K_2= K_2(a,b)$ such that
\begin{equation}\label{C0-C1}
\|\nabla_x G(t,s)f\|_{\infty}\le \frac{K_1}{\sqrt{t-s}}\|f\|_{\infty},\qquad a\leq s<t\leq b, \;\, f \in C_b(\Rd),
\end{equation}
\begin{equation}\label{C1-C1}
\|\nabla_x G(t,s)f\|_{\infty}\le K_2\|f\|_{C^1_b(\Rd)},\qquad a\leq s<t\leq b ,\;\, f \in C^1_b(\Rd).
\end{equation}
\end{prop}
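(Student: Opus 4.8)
The natural route is Bernstein's method combined with the maximum principle that Hypothesis \ref{base}(iii) makes available. Fix $a\in I$, $b>a$, and for $f\in C_b(\Rd)$ (or $f\in C^1_b(\Rd)$) and $s\in[a,b)$ set $u(t,\cdot):=G(t,s)f$. By interior parabolic regularity, using Hypotheses \ref{base}(i) and \ref{smooth}(i), $u$ is smooth enough in $(s,+\infty)\times\Rd$ — of class, say, $C^{1,3}$ — so that the equation $D_tu=\mathcal{A}(t)u$ may be differentiated once in space and $D_hu$ is itself a classical solution of a parabolic equation; put $v:=|\nabla_x u|^2$.

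Two computations are the core. First, multiplying the equation by $2u$,
\[
D_t(u^2)-\mathcal{A}(t)(u^2)=-2\langle Q(t,\cdot)\nabla_xu,\nabla_xu\rangle\le -2\eta(t,\cdot)\,v .
\]
Second, differentiating the equation with respect to $x_h$, multiplying by $2D_hu$ and summing over $h$,
\[
D_tv-\mathcal{A}(t)v=-2\sum_{h}\langle Q\nabla_x(D_hu),\nabla_x(D_hu)\rangle+2\sum_{h}D_hu\Big(\sum_{i,j}(D_hq_{ij})D_{ij}u+\sum_i(D_hb_i)D_iu\Big).
\]
The first sum on the right is bounded below by $2\eta(t,\cdot)|D^2u|^2$; the term containing $\nabla_xb$ is controlled by $2m(t)v$ via Hypothesis \ref{smooth}(iii); the term containing $\nabla_xq_{ij}$ is bounded, using Hypothesis \ref{smooth}(ii) and the Cauchy--Schwarz and Young inequalities, by $\eta(t,\cdot)|D^2u|^2+c_dk(t)^2\eta(t,\cdot)\,v$ for a dimensional constant $c_d$. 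Hence, discarding the leftover $-\eta(t,\cdot)|D^2u|^2\le 0$,
\[
D_tv-\mathcal{A}(t)v\le \big(2m(t)+c_dk(t)^2\eta(t,\cdot)\big)v .
\]
Note that the only term carrying the possibly unbounded coefficient $\eta(t,\cdot)$ is $c_dk(t)^2\eta(t,\cdot)v$, and it is precisely matched by the $-2\eta(t,\cdot)v$ produced by the first computation; this is the point that makes the unboundedness of $\eta$ harmless.

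Now let $g\in C^1([a,b])$, $g\ge 0$, and set $w:=\beta u^2+g(t)\,v$ with $\beta>0$ to be fixed. Combining the two estimates,
\[
D_tw-\mathcal{A}(t)w\le \eta(t,\cdot)\big(c_dg(t)k(t)^2-2\beta\big)v+\big(g'(t)+2g(t)m(t)\big)v .
\]
Since $k,m,g,g'$ are bounded on $[a,b]$ and $\eta\ge\eta_0>0$, one may choose $\beta=\beta(a,b,g)$ so large that $c_dg(t)k(t)^2-2\beta\le-\delta$ on $[a,b]$ with $\delta\eta_0\ge\sup_{[a,b]}(g'+2gm)$; then the right-hand side is $\le 0$, i.e. $w$ is a bounded subsolution of $D_tw=\mathcal{A}(t)w$ in $(s,b]\times\Rd$. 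The maximum principle available under Hypothesis \ref{base}(iii) then gives $\sup_{\Rd}w(t,\cdot)\le\sup_{\Rd}w(s,\cdot)$ for $t\in(s,b]$. Taking $g(t)=t-s$, so that $w(s,\cdot)=\beta f^2$, yields $(t-s)|\nabla_xu(t,\cdot)|^2\le\beta\|f\|_\infty^2$, which is \eqref{C0-C1} with $K_1=\sqrt\beta$; taking $g\equiv1$ and $f\in C^1_b(\Rd)$, so that $w(s,\cdot)=\beta f^2+|\nabla f|^2$, yields $|\nabla_xu(t,\cdot)|^2\le(\beta+1)\|f\|_{C^1_b(\Rd)}^2$, which is \eqref{C1-C1} with $K_2=\sqrt{\beta+1}$. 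Since all the bounds used are independent of $s\in[a,b)$, the constants depend only on $a,b$.

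Two points need care. The computation above requires the extra spatial regularity of $u=G(t,s)f$, which holds for smooth data; the case of a general $f\in C_b(\Rd)$ (resp. $f\in C^1_b(\Rd)$) follows by approximating $f$ by functions $f_n\in C^\infty_b(\Rd)$ with $\|f_n\|_\infty\le\|f\|_\infty$ (resp. $\|f_n\|_{C^1_b(\Rd)}\le\|f\|_{C^1_b(\Rd)}$) converging to $f$ locally uniformly (together with $\nabla f_n\to\nabla f$ in the second case): then $G(t,s)f_n\to G(t,s)f$ by the representation \eqref{baglioni-0} and tightness, and, by interior Schauder estimates together with uniqueness of bounded solutions, also $\nabla_xG(t,s)f_n\to\nabla_xG(t,s)f$ locally uniformly on $(s,b]\times\Rd$, so one passes to the limit in \eqref{C0-C1}--\eqref{C1-C1}, which are pointwise in $x$. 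The genuinely delicate point, and \emph{the main obstacle}, is the rigorous application of the maximum principle: one must know that $w$ is bounded on $[s,b]\times\Rd$, which is not evident near $t=s$ for rough data. This is handled by running the whole argument first on the approximating Dirichlet problems in the balls $B_n$, where everything is smooth up to the boundary and the leading term $-\eta|D^2u|^2$ must be used more carefully to absorb the boundary contribution, and then letting $n\to+\infty$; this is the content of \cite[Thm.~4.11]{KunLorLun09Non}, to which we refer for the details.
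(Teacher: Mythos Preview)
The paper does not give its own proof of this proposition; it simply records that the estimates were established in \cite[Thm.~4.11]{KunLorLun09Non}. Your Bernstein-method sketch is correct and is exactly the argument carried out in that reference --- down to the auxiliary function $w=\beta u^2+g(t)|\nabla_xu|^2$ and the passage through Dirichlet problems on balls that you yourself invoke at the end --- so your proposal is in full agreement with the paper.
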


Global  estimates of the second and third order space derivatives of $G(t,s)f$ are available under stronger assumptions, arguing as in the autonomous case (e.g., \cite{L1,BL1,Lor11Opt}). However, they are not needed here. To prove that mild solutions are in fact classical, local smoothing properties of $G(t,s)$ are enough.

\begin{prop}\label{linear_reg}
Let Hypotheses \ref{base} hold. For $a\in I$, $b>a$,  $g\in C_b([a, b]\times \R^d)$, let
$v$ be the function defined in \eqref{convolution}. Then
\begin{itemize}
\item[(i)]
$v(t, \cdot)\in C^1 (\R^d)$ for $a\leq t\leq b$, and $\sup_{a\leq t\leq b, \, |x|\leq R} |D_jv(t, x)|<+\infty$ for $j=1, \ldots, d$ and for every $R>0$.
\item[(ii)]
If in addition  $g(t,\cdot) $ is  H\"older continuous in every ball, uniformly with respect to $t \in [a,b]$, then $D_{ij} v$, $D_tv$ exist and are continuous in $[a,b]\times\Rd$ for $i$, $j=1, \ldots d$. Moreover,   $D_t v= \A(t)v+g$ in $[a,b]\times\Rd$.
\item[(iii)] If Hypotheses \ref{base} and \ref{smooth} hold, then
$v(t, \cdot)\in C^1_b(\R^d)$ for $a\leq t\leq b$, and $\sup_{a\leq t\leq b, \, x\in \R^d} |D_jv(t, x)|<+\infty$ for $j=1, \ldots, d$.
\end{itemize}
\end{prop}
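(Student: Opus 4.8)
The plan is to prove the three parts of Proposition~\ref{linear_reg} by reducing the regularity of the convolution $v$ to the local and global smoothing estimates for $G(t,s)$ already collected in Propositions~\ref{puffpuff} and~\ref{uni-es}, together with Lemma~\ref{cont}, which guarantees that the integrand $(t,r,x)\mapsto (G(t,r)g(r,\cdot))(x)$ is jointly continuous and bounded. The overall strategy is the classical one for parabolic convolutions, adapted to the fact that here ``solution'' means mild solution and that the coefficients are unbounded: one never differentiates under the integral sign naively, but instead splits $v(t,\cdot)$ near a fixed time $t_0$ into a ``regular tail'' $\int_a^{t_0-\delta}$ and a ``small head'' $\int_{t_0-\delta}^{t}$, estimates the head by the integrable singularity coming from \eqref{Friedman}, and passes to the limit.

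For part (i), I would fix $R>0$ and $t\in[a,b]$, and use \eqref{Friedman} with $\eta=1$: for $a\le r<t\le b$,
\[
\|(G(t,r)g(r,\cdot))_{|\overline B_R}\|_{C^1(\overline B_R)}\le \frac{C_1}{(t-r)^{1/2}}\|g(r,\cdot)\|_\infty\le \frac{C_1}{(t-r)^{1/2}}\|g\|_\infty .
\]
Since $(t-r)^{-1/2}$ is integrable in $r$ on $(a,t)$, the integral defining $\nabla_x v(t,\cdot)$ converges absolutely and uniformly on $\overline B_R$; to see that $v(t,\cdot)\in C^1(\overline B_R)$ one checks that $x\mapsto \int_a^t (G(t,r)g(r,\cdot))(x)\,dr$ has a continuous gradient, which follows because each $(G(t,r)g(r,\cdot))$ is $C^1$ with gradient bounded by the integrable function above, so the dominated convergence theorem lets one differentiate under the integral sign on $\overline B_R$. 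The uniform bound $\sup_{a\le t\le b,\,|x|\le R}|D_jv(t,x)|\le 2C_1\sqrt{b-a}\,\|g\|_\infty$ is then immediate. For part (iii), the same argument is run globally using \eqref{C0-C1} in place of \eqref{Friedman}: under Hypotheses \ref{base} and \ref{smooth}, $\|\nabla_x G(t,r)g(r,\cdot)\|_\infty\le K_1(t-r)^{-1/2}\|g\|_\infty$, so $\nabla_x v(t,\cdot)$ is bounded on all of $\R^d$ uniformly for $t\in[a,b]$, and $v(t,\cdot)\in C^1_b(\R^d)$.

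Part (ii) is the substantive one and is where I expect the main obstacle. Here I would invoke the additional hypothesis that $g(t,\cdot)$ is $\theta$-H\"older in every ball uniformly in $t$, and split $v(t,x)=\int_a^{t_0-\delta}+\int_{t_0-\delta}^t$ near a reference point $(t_0,x_0)$ with $x_0\in B_R$. On the tail $[a,t_0-\delta]$, estimate \eqref{noi1} with $\eta=2$, $\theta$ as given, gives $\|G(t,r)g(r,\cdot)\|_{C^2(\overline B_R)}\le C_2(t-r)^{-(2-\theta)/2}\|g(r,\cdot)\|_{C^\theta(\overline B_{R+1})}$, integrable since $r$ stays at distance $\ge\delta$ from $t$, so the tail is $C^2$ in $x$ with a uniformly bounded second derivative; the time derivative of the tail is handled by differentiating under the integral and using $D_t G(t,r)g(r,\cdot)=\A(t)G(t,r)g(r,\cdot)$ together with the $C^2$ bound and the local boundedness of the coefficients of $\A(t)$ on $\overline B_R$. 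The head $\int_{t_0-\delta}^t$, together with the boundary term produced by the Leibniz rule $D_t\int_a^t=\;(G(t,t)g(t,\cdot))(x)+\int_a^t D_t(\cdots)\,dr = g(t,x)+\int_a^t \A(t)G(t,r)g(r,\cdot)\,dr$, is made arbitrarily small (in $C^1$, not $C^2$, on $\overline B_R$) by choosing $\delta$ small, using the integrable singularity $(t-r)^{-(2-\theta)/2}<1$ when $\theta\in(0,1)$ — note $(2-\theta)/2<1$, so the $C^2$-estimate on the head is genuinely integrable too, which is the point that makes the whole scheme work. Combining, $D_{ij}v$ and $D_tv$ exist, are continuous on $[a,b]\times\R^d$ (continuity in $t$ near $t_0=a$ uses $G(t_0,t_0)=I$ and Lemma~\ref{cont}), and the identity $D_tv=\A(t)v+g$ follows by passing $\delta\to0$. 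The delicate point to get right is the interchange of $D_t$ with $\int_a^t$ and the justification that $\int_{t_0-\delta}^t \A(t)G(t,r)g(r,\cdot)(x)\,dr\to0$ as $\delta\to0$ uniformly on $\overline B_R$; this is exactly where the H\"older continuity of $g$ is essential, since without it one only has the non-integrable bound $(t-r)^{-1}$ for the second derivatives.
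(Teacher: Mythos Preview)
Your treatment of parts (i) and (iii) is exactly what the paper does: differentiate under the integral sign using the integrable singularity $(t-r)^{-1/2}$ coming from \eqref{Friedman} (locally) or \eqref{C0-C1} (globally), and dominated convergence.

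For part (ii), your plan is correct in substance but organized differently from the paper, and one ingredient is missing from your sketch. First, the tail/head decomposition is superfluous for the \emph{existence} of $D_{ij}v$: since you yourself note that $(2-\theta)/2<1$, the $C^2$ bound from \eqref{noi1} is already integrable on the whole interval $(a,t)$, so the paper simply differentiates twice under the integral sign with no splitting. Second, and more importantly, the paper does not obtain \emph{joint continuity} of $D_{ij}v$ in $(t,x)$ by a tail/head argument. Instead it bootstraps: taking $\eta\in(2,\min\{2+\alpha,2+\theta\})$ in \eqref{noi1} gives $\sup_{t}\|v(t,\cdot)\|_{C^\eta(\overline B_R)}<\infty$, and then the interpolation inequality
\[
\|\varphi\|_{C^2(\overline B_R)}\le C\,\|\varphi\|_\infty^{\,1-2/\eta}\,\|\varphi\|_{C^\eta(\overline B_R)}^{\,2/\eta}
\]
applied to $\varphi=v(t,\cdot)-v(s,\cdot)$ converts the already-known continuity of $t\mapsto v(t,\cdot)$ in $C_b$ into H\"older continuity in $C^2(\overline B_R)$. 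Your outline does not say how joint continuity is obtained, and the tail/head route would require a separate dominated-convergence argument for each fixed $r$ that you do not spell out. Third, for $D_tv$ the paper does not invoke a Leibniz rule (which is indeed not directly applicable because the integrand's $t$-derivative is singular at $r=t$); it writes the difference quotient explicitly as
\[
h^{-1}(v(t+h,x)-v(t,x))=\int_a^t h^{-1}\bigl((G(t+h,r)-G(t,r))g(r,\cdot)\bigr)(x)\,dr+h^{-1}\!\int_t^{t+h}\!\!(G(t+h,r)g(r,\cdot))(x)\,dr,
\]
bounds the first integrand via $|f_{t,x}(h,r)|=|h^{-1}\int_0^h(\A(t+\sigma)G(t+\sigma,r)g(r,\cdot))(x)\,d\sigma|\le C(t-r)^{(\theta-2)/2}$ using \eqref{noi1}, applies dominated convergence, and handles the second term with Lemma~\ref{cont} and the mean value theorem. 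This is precisely the rigorous version of what you call ``the delicate point''; your formal Leibniz identity is the \emph{conclusion}, not the argument.
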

\begin{proof}
(i) Estimate \eqref{Friedman} with $\eta =1$ allows to differentiate $v$ with respect to $x_j$,   to obtain that $D_jv(t, \cdot)$ is continuous and
$|D_j v(t, x)| \le 2C_1(a,b, R,1) \sqrt{b-a}\|g\|_{\infty}$ for every $t\in [a,b]$, $|x|\leq R$, $j=1, \ldots, d$.

(ii) If $g(t,\cdot) \in C^{\theta}(B_R)$ uniformly with respect to $t \in [a,b]$ for every $R>0$, estimate \eqref{noi1} with $\eta =2$ allows to differentiate continuously $v(t, \cdot)$ twice with respect to the space variables, and to get $|D_{ij}v(t,x)|\leq 2\theta^{-1}C_2(a,b,R, 2, \theta)(b-a)^{\theta/2}\sup_{a\leq r\leq b}\|g(r, \cdot)\|_{C^{\theta}(B_{R+1})}$ for $i$, $j=1, \ldots d$, $x\in \overline{B}_R$.  Continuity in time of the   space derivatives is readily obtained by interpolation. Indeed, taking $\eta \in (2, \min \{ \alpha +2, \theta +2\})$ in  \eqref{noi1} we get $ v(t, \cdot)\in C^{\eta}(\overline{B}_R)$ and $\sup_{a \leq t\leq b}\| v(t, \cdot)\|_{C^{\eta}(\overline{B}_R)} <\infty$. Applying the interpolation inequality
$$\| \varphi\|_{C^2(\overline{B}_R)} \leq C \|\varphi\|_{\infty}^{1-2/\eta} \| \varphi\|_{C^\eta (\overline{B}_R)}^{2/\eta}, \quad \varphi\in C^\eta (\overline{B}_R)$$
to $  v(t, \cdot) - v(s, \cdot)$, we obtain that $t\mapsto v(t, \cdot)$ is continuous (in fact, H\"older continuous) with values in $C^2(\overline{B}_R)$, so that the derivatives $D_iv$, $D_{ij}v$ are continuous in time, uniformly with respect to the space variables in $B_R$. Therefore, $D_iv$, $D_{ij}v$ are continuous in $[a,b]\times\Rd$.

  To conclude, we have to show that $v$ is   differentiable with respect to time  and that $D_t v=\A(t)v+g$ in $[a,b]\times \Rd$.

Let us consider the right derivative. Fix
$t \in [a,b)$, $x \in B_R $ and  $h \in (0, b-t]$. Then,
\begin{align}\label{splitting}
h^{-1}\Big(v(t+h,x)-v(t,x)\Big)=&h^{-1}\int_a^{t}\left(\big(G(t+h,r)-G(t,r)\big)g(r,\cdot)\right)(x) dr\nonumber\\
&\quad + h^{-1}\int_t^{t+h}(G(t+h,r)g(r, \cdot))(x)\, dr\nnm\\
=& \int_a^{t} f_{t,x}(h,r)\, dr+  h^{-1}\int_t^{t+h}(G(t+h,r)g(r, \cdot))(x)dr,
\end{align}
where
$$f_{t,x}(h,r):=h^{-1}\left(\big(G(t+h,r)-G(t,r)\big)g(r,\cdot)\right)(x) .$$
Then
 $$\lim_{h\to 0^+} f_{t,x}(h, r)= \A(t)\big(G(t,r)g(r,\cdot)\big)(x) , \quad  r \in [a,  t). $$
Let us estimate $|f_{t,x}(h,r)|$. Let $K>0$  be such that
$$  |q_{ij}(t,x)|\leq K, \;|b_{i}(t,x)|\leq K, \quad a\leq t\leq b, \;x\in B_R. $$
Then,
using \eqref{noi1} with $\eta =2$ and \eqref{Friedman} with $\eta =1$   we obtain
$$\begin{array}{lll}
&|f_{t,x}(h,r)|  = \ds \bigg|\frac{1}{h} \int_0^h (\A(t+\sigma)G(t+\sigma, r)g(r, \cdot))(x)d\sigma \bigg|
\\
\\
& \leq  \ds \frac{K }{h} \int_0^h \bigg( \sum_{i,j=1}^d |D_{ij}(G(t+\sigma ,r)g(r,\cdot)(x)| +\sum_{i=1}^d|D_i(G(t+\sigma,r)g(r,\cdot)(x)| \bigg)\, dr
\\
\\
& \leq C(\sup_{r\in [a,b]}\|g(r,\cdot)\|_{C^{\theta}( \overline{B}_{R+1})} +\|g\|_{\infty}) (t -r)^{(\theta -1)/2}
\end{array}$$
for some positive constant $C=C(a,b,R,\theta)$.
Hence, by the Dominated Convergence Theorem,
$$\lim_{h \to 0^+}\int_a^{t} f_{t,x}(h,r)dr= \int_a^t \A(t)\big(G(t,r)g(r,\cdot)\big)(x)dr.$$
Concerning the second term in the right hand side of \eqref{splitting}, by Lemma \ref{cont} the function $r\mapsto (G(t+h,r)g(r, \cdot))(x)$ is continuous in $[t,t+h]$.  Therefore there exists $r_h \in (t,t+h)$ such that
$$\frac{1}{h}\int_t^{t+h}(G(t+h,r)g(r,\cdot))(x)\, dr =(G(t+h,r_h)g(r_h,\cdot))(x).$$
Still by Lemma \ref{cont},  the function $(t,r)\to (G(t,r)g(r,\cdot))(x)$ is continuous   for any $x \in \Rd$. Since  $r_h \to t$ as $h \to 0$, we get
$$\lim_{h \to 0^+}\frac{1}{h}\int_t^{t+h}(G(t+h,r)g(r,\cdot))(x)\, dr= g(t,x).$$
Since $R$ is arbitrary,
\begin{equation}\label{studio}
\lim_{h \to 0^+}h^{-1}\Big(v(t+h,x)-v(t,x)\Big)\!=\!\int_a^t \A(t)\big(G(t,r)g(r,\cdot)\big)(x) dr +g(t,x),
\end{equation}
for any $t \in [a,b)$ and $x \in \Rd$.

Let us consider the left derivative. For $t \in (a,b]$, $x \in B_R $ and  $h \in [a-t,0)$ we have
\begin{align*}
h^{-1}\Big(v(t+h,x)-v(t,x)\Big)
&=\!\int_a^{t+h} f_{t,x}(h,r)\, dr\!-\!  h^{-1}\!\int_{t+h}^t\!\!\!\!(G(t,r)g(r, \cdot))(x)\, dr
\end{align*}
and arguing as before we get \eqref{studio}, with $\lim_{h \to 0^-}$ instead of $\lim_{h \to 0^+}$.

Since $D_{ij}v \in C_b([a,b]\times \Rd)$ for $i, j=1, \ldots , d$, we conclude that $D_t v \in C([a,b]\times \Rd)$ and satisfies $D_t v=\mathcal{A}(t)v+g$ in $[a,b]\times \Rd$ as claimed.

(iii) If also Hypothesis \ref{smooth} holds, estimate \eqref{C0-C1} allows to differentiate $v$ with respect to $x_j$ and to obtain that $D_jv(t, \cdot)$ is continuous and satisfies
$\|D_j v(t, \cdot)\|_{\infty} \le 2K_1(a,b) \sqrt{b-a}\|g\|_{\infty}$ for every $t\in [a,b]$, $j=1, \ldots,d$.
\end{proof}


\section{Semilinear problems}


Fixed $s\in I$, we consider the semilinear parabolic problem
\begin{equation}\label{p_nonlin}
\left\{
\begin{array}{ll}
D_t u(t,x)={\mathcal{A}}(t)u(t,x)+\psi(t,u(t,x)),\quad\quad & (t,x)\in (s,+\infty)\times \Rd,\\[1mm]
u(s,x)= f(x),\quad\quad & x \in \Rd,
\end{array}\right.
\end{equation}
where $\A(\cdot)$ is defined in \eqref{oper}, $f\in X$, $X$ being either $L^p(\Rd,\mu_s)$, $p \in (1,+\infty)$ or $C_b(\Rd)$, and $\psi:I\times \R \to \R$ is a given  function.

\begin{defi} Let $\tau>s \in I$ and $f \in X$. A function $u:[s, \tau]\times \R^d\mapsto \R$ is called
\begin{itemize}
\item[(i)]   \emph{classical solution} of \eqref{p_nonlin} in the interval $[s,\tau]$, if $u\in C^{1,2}((s,\tau]\times \Rd))\cap C([s,\tau]\times \Rd)$ and $u$ satisfies \eqref{p_nonlin};
\item[(ii)]   \emph{mild solution} of \eqref{p_nonlin} in the interval $[s,\tau]$, if for a.e. $x\in \Rd$ and any $t\in (s,\tau]$, the function $r \mapsto \big(G(t,r)\psi(r,u(r,\cdot))\big)(x)$ is integrable in $(s,t)$ and
\begin{equation}\label{defi_mild}
u(t,x)= (G(t,s)u_0)(x)+\int_s^t \big(G(t,r)\psi(r,u(r,\cdot))\big)(x)\, dr,
\end{equation}
for any $t \in [s,\tau]$ and a.e. $x\in \Rd$.
\end{itemize}
\end{defi}


\subsection{Local Existence and Uniqueness of a mild solution}


\subsubsection{The case   $X=L^p(\Rd, \mu_s)$.}
This subsection is devoted to prove existence and uniqueness of a mild solution of   \eqref{p_nonlin} when the initial datum $f$ belongs to $L^p(\Rd,\mu_s)$. To this aim we require that the function $\psi:I \times\R\to \R$ satisfies the following assumptions.

\begin{hyp}\label{hyp1_F}  $\psi(t, \cdot)$ is Lipschitz continuous, uniformly with respect to $t$ in bounded subintervals of $I$, i.e. for any $s \in I$ and $\tau>s$  there exists $L>0$ such that
$$|\psi(t,\xi )-\psi(t,\eta )|\le L|\xi -\eta |,\qquad\;\, t\in [s,\tau],\, \xi, \;\eta  \in \R.$$
Moreover, $t\mapsto \psi(t, 0)$ is in $L^1_{loc}(I)$.
\end{hyp}

In the sequel we will consider the measure
$$\nu(J\times \mathcal O):= \int_J\mu_t(\mathcal O)\,dt,$$
defined on Borel sets $J\subset I$, $\mathcal{O}\subset \Rd$ and canonically extended to the Borel sets of $I\times \R^{d}$.

\begin{thm}\label{loc-exis}
Let   Hypotheses \ref{base}  and  \ref{hyp1_F}  be satisfied.
Then, for any $s \in I$, $\tau >s$ and  $f\in L^p(\Rd,\mu_s)$ there exists a unique mild solution $u_f$ of \eqref{p_nonlin} in $[s,\tau ]$, such that  $u_f$ belongs to $L^{p}([s,\tau ]\times\Rd,\nu)$. There exists $K>0$,  depending only on $s$ and $\tau$,  such that
for every $f$,  $g \in L^p(\Rd,\mu_s)$ we have
\begin{equation}\label{terremoto}
\sup_{s\leq t\leq \tau} \|u_f(t, \cdot) -u_g(t, \cdot) \|_{L^p(\R^d, \mu_t)}\leq   K \|f-g\|_{L^p(\Rd,\mu_s)}.
\end{equation}
\end{thm}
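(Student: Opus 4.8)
The plan is to set up a fixed point argument in the space $Y_\tau := L^p([s,\tau]\times\Rd,\nu)$, using the contraction mapping principle on subintervals of $[s,\tau]$ and then patching the local solutions together to obtain a solution on the whole interval. For $u\in Y_\tau$ define the operator
\[
(\Gamma u)(t,x) := (G(t,s)f)(x) + \int_s^t \bigl(G(t,r)\psi(r,u(r,\cdot))\bigr)(x)\,dr .
\]
The first task is to check that $\Gamma$ maps $Y_\tau$ (or a suitable ball in it) into itself: by Hypothesis \ref{hyp1_F}, $|\psi(r,u(r,\cdot))|\le |\psi(r,0)| + L|u(r,\cdot)|$, and since $G(t,r)$ is a contraction from $L^p(\Rd,\mu_r)$ to $L^p(\Rd,\mu_t)$, Minkowski's integral inequality gives
\[
\|(\Gamma u)(t,\cdot)\|_{L^p(\Rd,\mu_t)} \le \|f\|_{L^p(\Rd,\mu_s)} + \int_s^t\bigl(|\psi(r,0)| + L\|u(r,\cdot)\|_{L^p(\Rd,\mu_r)}\bigr)dr .
\]
Raising to the $p$-th power and integrating in $t$ over $[s,\tau]$ controls $\|\Gamma u\|_{Y_\tau}$ in terms of $\|f\|_{L^p(\Rd,\mu_s)}$, $\|\psi(\cdot,0)\|_{L^1(s,\tau)}$, and $\|u\|_{Y_\tau}$; one must also verify measurability of $(t,x)\mapsto(\Gamma u)(t,x)$, which follows from Fubini together with the representation \eqref{baglioni-0} and a density/approximation argument using that $C_b(\Rd)$ is dense in $L^p(\Rd,\mu_s)$.

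Next comes the contraction estimate. For $u,v\in Y_\tau$, again using the contraction property of $G(t,r)$ and the Lipschitz bound,
\[
\|(\Gamma u)(t,\cdot)-(\Gamma v)(t,\cdot)\|_{L^p(\Rd,\mu_t)} \le L\int_s^t \|u(r,\cdot)-v(r,\cdot)\|_{L^p(\Rd,\mu_r)}\,dr .
\]
Denoting $w(r):=\|u(r,\cdot)-v(r,\cdot)\|_{L^p(\Rd,\mu_r)}$ one gets $\|\Gamma u - \Gamma v\|_{Y_\tau}^p \le L^p\int_s^\tau\bigl(\int_s^t w(r)\,dr\bigr)^p dt$; by H\"older in the inner integral this is bounded by $L^p (\tau-s)^{p}(\tau - s)^{-1}\cdots$, and more cleanly, on a short interval $[s,s+\delta]$ one obtains $\|\Gamma u-\Gamma v\|_{Y_{s+\delta}} \le L\,\delta\,\|u-v\|_{Y_{s+\delta}}$ (or a similar bound with a power of $\delta$), which is a strict contraction once $\delta$ is small enough that $L\delta<1$. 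The Banach fixed point theorem then yields a unique mild solution on $[s,s+\delta]$; since $\delta$ depends only on $L$ (hence only on $s,\tau$ through Hypothesis \ref{hyp1_F}) and not on the size of the data, one repeats the argument on $[s+\delta,s+2\delta]$ with new initial datum $u(s+\delta,\cdot)\in L^p(\Rd,\mu_{s+\delta})$ — here one uses that the evolution system of measures structure is consistent, i.e.\ $G(t,r) = G(t,\sigma)G(\sigma,r)$ and that the mild solution formula concatenates correctly across $\sigma$ — and after finitely many steps covers all of $[s,\tau]$.

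Finally, \eqref{terremoto} follows by running the same Gronwall-type estimate globally: with $w(t):=\|u_f(t,\cdot)-u_g(t,\cdot)\|_{L^p(\Rd,\mu_t)}$, the mild formulation and the contraction property of $G$ give
\[
w(t) \le \|f-g\|_{L^p(\Rd,\mu_s)} + L\int_s^t w(r)\,dr ,
\]
and Lemma \ref{Gronwall}(i) (after checking $w\in L^\infty(s,\tau)$, which comes from the a priori bound on $\sup_t\|u(t,\cdot)\|_{L^p(\Rd,\mu_t)}$ established in the first step) yields $w(t)\le e^{L(t-s)}\|f-g\|_{L^p(\Rd,\mu_s)}$, so $K = e^{L(\tau-s)}$. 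The main obstacle I anticipate is not any single inequality but the measurability and consistency bookkeeping: one must be careful that $u_f$, a priori only an element of $L^p([s,\tau]\times\Rd,\nu)$, has well-defined slices $u_f(t,\cdot)\in L^p(\Rd,\mu_t)$ for \emph{every} $t$ (not just a.e.\ $t$) so that the pointwise-in-$t$ estimates and the patching across subintervals make sense — this requires showing that the right-hand side of \eqref{defi_mild}, built from $G(t,s)f$ and the convolution term, is genuinely continuous (or at least well-defined) in $t$ with values in $L^p(\Rd,\mu_t)$, relying on the weak continuity of the measures (Lemma \ref{continuita'}) and the regularizing/continuity properties of $G(t,r)$ recorded in Lemma \ref{cont} and Proposition \ref{weak-con}.
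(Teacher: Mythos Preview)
Your approach is workable but differs from the paper's in two organizational choices, and the obstacle you flag at the end is precisely the one the paper's setup is designed to avoid.

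The paper does \emph{not} run a short-time contraction in $L^p([s,s+\delta]\times\Rd,\nu)$ and patch. Instead it works directly on the whole interval $[s,\tau]$ in the smaller space
\[
Y=\Bigl\{u:\ u\ \nu\text{-measurable},\ \sup_{s\le t\le\tau}\|u(t,\cdot)\|_{L^p(\Rd,\mu_t)}<\infty\Bigr\},
\]
equipped with the exponentially weighted norm $\|u\|_Y=\sup_{s\le t\le\tau}e^{-\omega t}\|u(t,\cdot)\|_{L^p(\Rd,\mu_t)}$. Choosing $\omega\ge 2L$ makes $\Gamma$ a $\tfrac12$-contraction on all of $Y$ in one shot, so no concatenation across subintervals is needed and the slice $u_f(t,\cdot)$ is well-defined for \emph{every} $t$ from the outset. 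Estimate \eqref{terremoto} then drops out of the contraction inequality itself ($\|u_f-u_g\|_Y\le\|f-g\|+\tfrac12\|u_f-u_g\|_Y$) rather than from a separate Gronwall argument. Uniqueness in the larger space $L^p([s,\tau]\times\Rd,\nu)$ is proved afterwards by a second weighted-norm estimate.

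Your route---local contraction plus patching, Gronwall for \eqref{terremoto}---is standard and can be completed, but the bookkeeping you anticipate is genuine and you have not carried it out: in $L^p([s,\tau]\times\Rd,\nu)$ the slice $u(s+\delta,\cdot)$ is only an a.e.\ object, so to restart on $[s+\delta,s+2\delta]$ you must first argue that the fixed point has a representative with $u(t,\cdot)\in L^p(\Rd,\mu_t)$ for every $t$ (by re-reading $u=\Gamma u$ pointwise in $t$), and likewise your Gronwall step needs $w\in L^\infty(s,\tau)$, which does not follow from $u\in L^p(\nu)$ alone but again from the pointwise-in-$t$ bound on $(\Gamma u)(t,\cdot)$. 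The paper's choice of $Y$ with a $\sup_t$-norm eliminates both issues; your choice of $L^p(\nu)$ as the fixed-point space forces you to recover that structure after the fact.
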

\begin{proof}
We look for a   mild solution in the space
$$Y:= \{ u:[s,\tau]\times\Rd\mapsto \R:\; u \;{\rm is}\;\nu-{\rm measurable}, \;\sup_{s\leq t\leq \tau} \|u(t, \cdot)\|_{L^p(\R^d, \mu_t)} <\infty\}$$
We consider the nonlinear operator $\Gamma$ defined on $Y$ by
\begin{equation}
\label{mild_gamma}
(\Gamma u)(t,x)= G(t,s)f(x)+\int_s^t (G(t,r)\psi(r, u(r, \cdot)))(x)\, dr,\qquad\;\, s\leq t\leq \tau ,
\end{equation}
and we look for a fixed point of $\Gamma$.  To this aim we prove that $\Gamma$  maps $Y$ into itself and it is a contraction provided $Y$ is endowed with the norm
$$\|u\|_Y = \sup_{s\leq t\leq \tau} e^{-\omega t}\|u(t, \cdot)\|_{L^p(\R^d, \mu_t)}  $$
with suitable $\omega >0$.  Note that $Y$ is a Banach space with the norm $\|\cdot \|_Y$, and $Y$ is continuously embedded in $L^p([s,\tau]\times \R^d, \nu)$.

Let $v_1, v_2 \in Y$. Then for $s\leq r\leq \tau$ we have
$$\|\psi(r, v_1(r, \cdot)) -\psi(r, v_2(r, \cdot))\|_{L^p(\R^d, \mu_r)} \leq L \|v_1(r, \cdot) - v_2(r, \cdot)\|_{L^p(\R^d, \mu_r)}, $$
where $L$ is the Lipschitz constant in Hypothesis \ref{hyp1_F}. Since $G(t,r)$ is a contraction from $L^p(\R^d, \mu_r)$ to $L^p(\R^d, \mu_t)$, then
$$ \begin{array}{lll}
e^{-\omega t}\|\Gamma v_1(t, \cdot) -\Gamma v_2(t, \cdot)\|_{L^p(\R^d, \mu_t)} \!\!\!\!& \leq &\!\!\!\!
\ds e^{-\omega t}\!L\int_s^t \|v_1(r, \cdot) - v_2(r, \cdot)\|_{L^p(\R^d, \mu_r)}dr
\\
\\
& \leq &
\omega^{-1}L\|v_1-v_2\|_Y.
\end{array}$$
Therefore, $\Gamma $ is a $1/2$-contraction   if $\omega  \geq 2L$. To prove that the range of $\Gamma$ is contained in $Y$ it is enough to check that $\Gamma(0)\in Y$. This is true since
$$\Gamma(0)(t, x) = (G(t,s)f)(x) + \int_s^t \psi(r, 0)dr, \quad s\leq t\leq \tau, \;x\in \R^d, $$
 so that
$$\|\Gamma (0)(t, \cdot) \|_{L^p(\R^d, \mu_t)} \leq \|f\|_{L^p(\R^d, \mu_s)} +  \bigg|\int_s^t \psi(r, 0)dr\bigg| , $$
which is bounded in $[s, \tau]$ since $\psi(\cdot, 0)\in L^1_{{\rm{loc}}}(I)$.

Let us prove the statement about dependence on the initial datum. For $f$, $g\in L^p(\R^d, \mu_s)$ we have
$$u_f(t, \cdot )-u_g(t, \cdot )= G(t,s)(f-g)+\int_s^t G(t, r)(\psi(r,u_f(r, \cdot )) -\psi(r,u_g(r, \cdot )))dr$$
so that
$$\|u_f -u_g \|_Y \leq  \|f-g\|_{L^p(\R^d, \mu_s)} + \frac{1}{2}\|u_f -u_g \|_Y $$
which implies
$$\|u_f -u_g \|_Y \leq  2\|f-g\|_{L^p(\R^d, \mu_s)}$$
and therefore
$$\sup_{s\leq t\leq \tau} \|u_f(t, \cdot) -u_g(t, \cdot) \|_{L^p(\R^d, \mu_t)}\leq 2e^{\omega \tau  } \|f-g\|_{L^p(\R^d, \mu_s)}$$
which yields \eqref{terremoto}.

Last, we prove uniqueness of the mild solution in $L^p([s, \tau]\times \R^d, \nu)$. We use the same trick as above, namely we endow $L^p([s, \tau]\times \R^d, \nu)$ with the norm
$$\|u\| := \bigg( \int_s^\tau \int_{\R^d} e^{-\omega t }|u(t, x)|^p \mu_t(dx)dt\bigg)^{1/p}, $$
with $\omega  $ large, precisely $\omega >  (\tau -s)^{p-1} L^p$.

If $u_1$, $u_2$ are two mild solutions of \eqref{p_nonlin} belonging to  $L^p([s,\tau]\times \R^d, \nu)$, we have
$$\begin{array}{lll}
&\|u_1-u_2\|^p  =  \ds \int_s^{\tau} e^{-\omega t}\left\| \int_s^t G(t, r)( \psi(r, u_1(r, \cdot)) -  \psi(r, u_2(r, \cdot)))dr\right\|_{L^p(\R^d, \mu_t)}^p dt
\\
\\
&\quad \leq \ds \int_s^{\tau} e^{-\omega t} (t-s)^{p-1} \int_s^t \|G(t, r)( \psi(r, u_1(r, \cdot)) -  \psi(r, u_2(r, \cdot)))\|_{L^p(\R^d, \mu_t)}^p dr\, dt
\\
\\
&\quad \leq \ds \int_s^{\tau} e^{-\omega t} (t-s)^{p-1} \int_s^t \| \psi(r, u_1(r, \cdot)) -  \psi(r, u_2(r, \cdot))\|_{L^p(\R^d, \mu_r)}^p dr\,dt
\\
\\
&\quad \leq \ds \int_s^{\tau} e^{-\omega t} (\tau -s)^{p-1} L^p \int_s^t \|  u_1(r, \cdot) - u_2(r, \cdot)\|_{L^p(\R^d, \mu_r)}^p dr\,dt
\\
\\
& \quad=  \ds  (\tau -s)^{p-1} L^p \int_s^{\tau} e^{-\omega r} \|  u_1(r, \cdot)- u_2(r, \cdot)\|_{L^p(\R^d, \mu_r)}^p \int_r^\tau e^{-\omega( t-r)}dt \,dr
\\
\\
& \quad=    (\tau -s)^{p-1} L^p \omega^{-1} \|u_1-u_2\|^p
\end{array}$$
Since $\omega >  (\tau -s)^{p-1} L^p$, then $\|u_1-u_2\| =0$ so that $u_1=u_2$.
\end{proof}

\subsubsection{The case  $X=C_b(\Rd)$}
Here we prove existence and uniqueness of a local mild solution to
\eqref{p_nonlin} when $f \in C_b(\Rd)$. In this setting, we weaken a part of Hypothesis \ref{hyp1_F} requiring just local Lipschitz continuity of the nonlinearity.

\begin{hyp}\label{hyp_c_b}
The function $\psi$ is continuous and $\psi(t,\cdot)$ is locally Lipschitz continuous, uniformly with respect to $t$ on bounded subintervals of $I$, i.e., for any $s\in I$  and $R>0$ there exists $L_R>0$ such that
\begin{equation}\label{Lip}
|\psi(t,\xi )-\psi(t,\eta )|\le L_R|\xi -\eta |, \qquad\;\, x,y\in [-R,R], t \in [s,s+1] .
\end{equation}
\end{hyp}

\begin{thm}\label{exi_Cb}
Under Hypotheses \ref{base} and \ref{hyp_c_b}, for any $ s \in I$ and any $\overline{f}\in C_b(\Rd)$ there are $r,\delta>0$ such that if $\|f-\overline{f}\|_{\infty}\le r$ then there exists a unique mild solution $u_f \in C_b([s,s+\delta]\times \Rd)$. If  $g \in C_b(\Rd)$ is such that $\|g-\overline{f}\|_{\infty}\le r$, then
\begin{equation}\label{dip_dati}
\|u_f(t,\cdot)-u_g(t,\cdot)\|_{\infty}\le 2 \|f-g\|_{\infty}, \quad\;\, t\in [s,s+\delta].
\end{equation}
\end{thm}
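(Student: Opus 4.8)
The plan is to set up a standard fixed-point argument for the operator $\Gamma$ of \eqref{mild_gamma} in the Banach space $C_b([s,s+\delta]\times\Rd)$ — which is legitimate by Proposition \ref{weak-con} and Lemma \ref{cont}, since these guarantee that $\Gamma$ maps $C_b([s,s+\delta]\times\Rd)$ into itself — on a small ball around the constant-in-time function $t\mapsto \overline{f}$. First I would fix $R:=\|\overline f\|_\infty+1$ and let $L_R$ be the local Lipschitz constant from Hypothesis \ref{hyp_c_b} on $[-R,R]$ for $t\in[s,s+1]$; set $M:=\sup_{t\in[s,s+1]}|\psi(t,0)|$ (finite by continuity of $\psi$, or one could just absorb $\psi(t,0)$ into the Lipschitz bound by writing $|\psi(t,\xi)|\le L_R|\xi|+M$ for $|\xi|\le R$). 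Then I would choose $r\in(0,1)$ and $\delta\in(0,1]$ small, to be fixed by the two inequalities below. Define the closed ball
\[
\mathcal{B}:=\{u\in C_b([s,s+\delta]\times\Rd):\ \|u-\overline f\|_{C_b([s,s+\delta]\times\Rd)}\le r\},
\]
where with a slight abuse $\overline f$ denotes the function $(t,x)\mapsto\overline f(x)$.

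The two things to check are invariance and contraction. For invariance, take $u\in\mathcal B$ and $f$ with $\|f-\overline f\|_\infty\le r$; then $\|u(t,\cdot)\|_\infty\le R$ for all $t$, so $\|\psi(t,u(t,\cdot))\|_\infty\le L_R R+M=:C_0$, and using the contractivity estimate \eqref{est_uni} together with $G(t,s)\overline f=\overline f$ when... (no — $\overline f$ need not be constant, so instead) I would write
\[
(\Gamma u)(t,\cdot)-\overline f=\big(G(t,s)f-\overline f\big)+\big(G(t,s)\overline f-\overline f\big)-\big(G(t,s)\overline f-G(t,s)f\big)^{\!}\ \text{— cleaner:}
\]
\[
(\Gamma u)(t,\cdot)-\overline f=G(t,s)(f-\overline f)+\big(G(t,s)\overline f-\overline f\big)+\int_s^t G(t,r)\psi(r,u(r,\cdot))\,dr.
\]
By \eqref{est_uni} the first term has sup-norm $\le r$, the last term $\le \delta C_0$; for the middle term, $\|G(t,s)\overline f-\overline f\|_\infty\to 0$ as $t\to s^+$ (strong continuity at $t=s$, which follows from $G(\cdot,s)\overline f\in C_b([s,+\infty)\times\Rd)$), so it is $\le r/3$ for $\delta$ small, say — and then shrinking $r,\delta$ one arranges $\|(\Gamma u)(t,\cdot)-\overline f\|_\infty\le r$ for all $t\in[s,s+\delta]$, giving $\Gamma(\mathcal B)\subset\mathcal B$. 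For the contraction, for $u_1,u_2\in\mathcal B$ one has $\|\psi(r,u_1(r,\cdot))-\psi(r,u_2(r,\cdot))\|_\infty\le L_R\|u_1(r,\cdot)-u_2(r,\cdot)\|_\infty$ since both arguments lie in $[-R,R]$, hence by \eqref{est_uni},
\[
\|(\Gamma u_1)(t,\cdot)-(\Gamma u_2)(t,\cdot)\|_\infty\le \delta L_R\,\|u_1-u_2\|_{C_b([s,s+\delta]\times\Rd)},
\]
so $\Gamma$ is a $\tfrac12$-contraction once $\delta L_R\le\tfrac12$. The contraction mapping theorem then yields a unique fixed point $u_f\in\mathcal B$, which is the desired mild solution; uniqueness within $C_b([s,s+\delta]\times\Rd)$ (not just within $\mathcal B$) follows because any mild solution is automatically continuous, starts at $f$, hence stays in $[-R,R]$ after possibly shrinking $\delta$, i.e. lands in $\mathcal B$ — alternatively one argues uniqueness directly via Gronwall (Lemma \ref{Gronwall}(i) applied to $t\mapsto\|u_1(t,\cdot)-u_2(t,\cdot)\|_\infty$, which is measurable by Lemma \ref{misu}).

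Finally, for the dependence estimate \eqref{dip_dati}: with $f,g$ both within $r$ of $\overline f$, subtract the two fixed-point identities and use \eqref{est_uni} and the Lipschitz bound to get, for $t\in[s,s+\delta]$,
\[
\|u_f(t,\cdot)-u_g(t,\cdot)\|_\infty\le \|f-g\|_\infty+\delta L_R\sup_{s\le r\le s+\delta}\|u_f(r,\cdot)-u_g(r,\cdot)\|_\infty,
\]
and taking the sup over $t$ on the left, since $\delta L_R\le\tfrac12$, absorb the last term to obtain $\sup_{t}\|u_f(t,\cdot)-u_g(t,\cdot)\|_\infty\le 2\|f-g\|_\infty$, which is \eqref{dip_dati}. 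The only mildly delicate point — the main obstacle — is the invariance step: one must handle the "initial layer" term $G(t,s)\overline f-\overline f$, which is not small uniformly in $\overline f$ but is small for $\delta$ small by strong continuity of $t\mapsto G(t,s)\overline f$ at $t=s$ (guaranteed by $G(\cdot,s)\overline f\in C_b([s,+\infty)\times\Rd)$ together with a compactness/tightness argument as in Lemma \ref{cont}); everything else is routine. One should also note that the choice of $\delta$ depends on $\overline f$ through $R=\|\overline f\|_\infty+1$ and through the modulus of continuity of $G(\cdot,s)\overline f$ at $s$, which is exactly the kind of local-in-time statement claimed.
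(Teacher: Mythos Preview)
Your approach has a genuine gap in the invariance step. You claim that the ``initial layer'' term satisfies $\|G(t,s)\overline f-\overline f\|_\infty\to 0$ as $t\to s^+$, justifying this by strong continuity of $t\mapsto G(t,s)\overline f$. But this is false in general for $\overline f\in C_b(\Rd)$: the Markov evolution operator $G(t,s)$ is \emph{not} strongly continuous on $C_b(\Rd)$ at $t=s$ when the coefficients are unbounded (think of the Ornstein--Uhlenbeck case with $\overline f(x)=\sin(|x|^2)$). The membership $G(\cdot,s)\overline f\in C_b([s,+\infty)\times\Rd)$ only gives joint continuity, hence locally uniform convergence $G(t,s)\overline f\to\overline f$, not uniform convergence on $\Rd$; and the tightness in Lemma \ref{cont} is for $x$ ranging over \emph{bounded} sets, which does not help here. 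So your ball $\mathcal B$ centered at $\overline f$ need not be mapped into itself. (There is also a minor arithmetic issue: your first term $\|G(t,s)(f-\overline f)\|_\infty$ is bounded by $r$ but can \emph{equal} $r$, so adding anything positive already exceeds $r$; this could be repaired by taking different radii for $\mathcal B$ and for the constraint on $f$, but the strong-continuity problem remains.)

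The paper sidesteps this entirely by centering the fixed-point ball at $0$, not at $\overline f$: it takes $R\ge 8\|\overline f\|_\infty$, sets $r:=R/8$, and works in $Y_R=\{u:\|u\|_\infty\le R\}$. Invariance is then checked via $\|\Gamma v\|_\infty\le\|\Gamma v-\Gamma 0\|_\infty+\|\Gamma 0\|_\infty\le R/2+\|G(\cdot,s)f\|_\infty+\delta\sup|\psi(r,0)|\le R/2+R/4+R/4$ for $\delta$ small, using only the contractivity \eqref{est_uni}; no initial-layer term appears. Your contraction estimate, your Gronwall alternative for uniqueness, and your proof of \eqref{dip_dati} are all correct and match the paper. (Your first uniqueness alternative---shrinking $\delta$ so that any mild solution stays in $[-R,R]$---has the same strong-continuity obstruction; stick with Gronwall.)
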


\begin{proof}
Fix $R>0$ such that $R \ge 8\|\overline{f}\|_{\infty}$. If $\|f-\overline{f}\|_{\infty}\le r:=R/8$, then
$$\|G(t,s)f\|_{\infty}\le R/4,\qquad\;\, t>s.$$
We look for a local mild solution in the space
$$Y_R=\{u \in C_b([s,s+\delta]\times \Rd):\,\, \|u\|_{C_b([s,s+\delta]\times \Rd)}\le R\}$$
where $\delta \in (0, 1]$ has to be determined.
We consider the nonlinear operator $\Gamma$ defined on $Y_R$ by  \eqref{mild_gamma}
and we prove that $\Gamma$ is a contraction which maps $Y_R$ into itself, if $\delta$ is small enough.

First of all, for every $v\in Y_R$ the function $(r,x)\mapsto \psi(r, v(r, x))$ is continuous and bounded in $ [s,s+\delta]\times \Rd$; hence by Proposition \ref{weak-con}, $\Gamma v\in C_b([s,s+\delta]\times \Rd)$.

Let $v_1, v_2 \in Y_R$. Then  $\|v_1(t)\|_{\infty}\le R$ and $\|v_2(t)\|_{\infty}\le R$ for any $t \in [s,s+\delta]$ and
\begin{align*}
\|\Gamma(v_1)-\Gamma(v_2) \|_{\infty}
&=\sup_{t\in [s,s+\delta]\atop x\in \Rd}\left|\int_s^t (G(t,r)[\psi(r,v_1(r,\cdot))-\psi(r,v_2(r,\cdot))])(x)\,dr\right|\\
& \le \sup_{t\in [s,s+\delta]}\int_s^{t} \|G(t,r)[\psi(r,v_1(r,\cdot))-\psi(r,v_2(r,\cdot))]\|_{\infty} \,dr\\
& \le \int_s^{s+\delta}\|\psi(r,v_1(r,\cdot))-\psi(r,v_2(r,\cdot))\|_{\infty}\,dr\\
&\le \delta L_R \|v_1-v_2\|_{\infty},
\end{align*}
where $L_R$ denotes the Lipschitz constant in \eqref{Lip}. (Note that the functions $r\mapsto \|G(t,r)[\psi(r,v_1(r,\cdot))-\psi(r,v_2(r,\cdot))]\|_{\infty}$ and $r\mapsto \|\psi(r,v_1(r,\cdot))-\psi(r,v_2(r,\cdot))\|_{\infty}$ are measurable in $(s,t)$ and in $(s, s+\delta)$ respectively, by Lemma \ref{misu}).
Then, choosing $\delta \le \delta_0:= \min\{1, (2 L_R)^{-1}\}$, we obtain that $\Gamma$ is a $\displaystyle{\frac{1}{2}}$-contraction. \\

Let now $v \in Y_R$. If $\delta \le \delta_0$ we have
\begin{align*}
\|\Gamma(v)\|_{\infty}&\le\|\Gamma(v)-\Gamma(0)\|_{\infty}+\|\Gamma(0)\|_{\infty}\\
& \le \frac{R}{2}+\|G(\cdot,s)f\|_{\infty}+ \delta \sup_{r \in [s,T]}|\psi(r,0)|\\
& \le \frac{R}{2}+ \frac{R}{4}+\delta \sup_{r \in [s,T]}|\psi(r,0)|.
\end{align*}
Thus, if $\delta \le \delta_R:=\min\{\delta_0,\delta_1\}$ where $\delta_1=1$ if $\psi(r,0)=0$ for every $r\in[s, s+1]$, $\delta_1:=(\sup_{r \in [s,s+1]}|\psi(r,0)|)^{-1} R/4$ otherwise,
  $\Gamma$ maps $Y_R$ into itself so that it has a unique fixed point in $Y_R$, that is a mild solution of \eqref{p_nonlin}.

To get uniqueness of the mild solution in $C_b([s,s+\delta]\times \Rd)$ we argue by contradiction. Let us assume that $u_1,u_2\in C_b([s,s+\delta]\times \Rd)$ be two mild solutions of \eqref{p_nonlin} and set $R'=\max\{\|u_1 \|_{\infty},\|u_2 \|_{\infty}\}$. For any $t \in [s,s+\delta]$, recalling that
the functions $r\mapsto \|\psi(r,u_1(r,\cdot))-\psi(r,u_2(r,\cdot))\|_\infty$ and $r\mapsto \|u_1(r, \cdot)-u_2(r, \cdot)\|_{\infty}$ are measurable in $(s,t)$ by Lemma \ref{misu},
we have
\begin{align*}
\|u_1(t)-u_2(t)\|_{\infty}&= \left\|\int_s^t \Big(G(t,r)\big(\psi(r,u_1(r,\cdot))-\psi(r,u_2(r,\cdot))\big)\Big)(x)\,dr\right\|_\infty\\
& \le \int_s^t \|\psi(r,u_1(r,\cdot))-\psi(r,u_2(r,\cdot))\|_\infty\,dr\\
&\le L_{R'}\int_s^t\|u_1(r, \cdot)-u_2(r, \cdot)\|_{\infty}\, dr.
\end{align*}
Since $t \mapsto \|u_1(t,\cdot)-u_2(t,\cdot)\|_\infty $ belongs to  $L^{\infty}((s,s+\delta))$, we can apply the  Gronwall Lemma \ref{Gronwall}(i), to deduce that $u_1(t,x)=u_2(t,x)$ for a.e. $t \in [s,s+\delta]$ and for every $x\in\R^d$. Since $u_1$ and $u_2$ are continuous, then $u_1(t,x)=u_2(t,x)$ for every $t \in [s,s+\delta]$,    $x\in\R^d$.

To conclude we prove \eqref{dip_dati}. Let $f$, $g\in B(\overline{f}, r)\subset C_b(\R^d)$. Then $u_f$ and $u_g$ belong to $Y_R$ and
$$u_f(t)-u_g(t)= G(t,s)(f-g)+(\Gamma u_f)(t)-(\Gamma u_g)(t),\quad\;\, t\in [s,s+\delta].$$
Since $\Gamma$ is a $\frac{1}{2}$-contraction in $Y_R$, then
\begin{align*}
\|u_f-u_g\|_{C_b([s,s+\delta]\times \Rd)}\le  2\|G(\cdot,s)(f-g)\|_{C_b([s,s+\delta]\times \Rd)}\le 2\|f-g\|_{\infty}.
\end{align*}
\end{proof}


\section{Regularity and Global existence}\label{sec_reg}


This section is devoted to  the regularity of the mild solution given by Theorem \ref{exi_Cb} and to its existence in large. Further regularity properties will be proved  under Hypothesis \ref{smooth}.
First, we show that for every $f\in C_b(\R^d)$ the local mild solution  of problem \eqref{p_nonlin} is actually a classical solution.

\begin{thm}\label{classical}
Assume that Hypotheses \ref{base}  and \ref{hyp_c_b} are satisfied. Then for any $f\in C_b(\R^d)$ the mild solution $u_f \in C_b([s,\tau]\times \Rd)$ of   problem \eqref{p_nonlin}  is a  classical solution. If in addition  Hypothesis \ref{smooth} holds, then
\begin{equation}
\label{uno}
\sup_{s<t\leq \tau, \,x\in\R^d}(t-s)^{1/2}|\nabla_xu_f(t,x)|<\infty , \quad {\it if}\;f\in C_b(\R^d),
\end{equation}
and
\begin{equation}
\label{due}
\sup_{s<t\leq \tau, \,x\in\R^d} |\nabla_xu_f(t,x)|<\infty , \quad {\it if}\; f\in C^1_b(\R^d).
\end{equation}
\end{thm}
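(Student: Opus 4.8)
The mild solution satisfies $u_f(t,\cdot)=G(t,s)f+v_f(t,\cdot)$, where $v_f(t,\cdot):=\int_s^t G(t,r)\psi(r,u_f(r,\cdot))\,dr$ is exactly the convolution \eqref{convolution} associated with $g(t,x):=\psi(t,u_f(t,x))$. Since $(t,x)\mapsto (G(t,s)f)(x)$ lies in $C^{1,2}((s,\tau]\times\R^d)\cap C([s,\tau]\times\R^d)$ and solves $D_tw=\A(t)w$, everything reduces to a regularity statement for $v_f$, which I would obtain by a two-step bootstrap based on Proposition \ref{linear_reg}. Step one: as $u_f\in C_b([s,\tau]\times\R^d)$ and $\psi$ is continuous, $g\in C_b([s,\tau]\times\R^d)$, so Proposition \ref{linear_reg}(i) gives $v_f(t,\cdot)\in C^1(\R^d)$ for every $t\in[s,\tau]$ together with $\sup_{s\le t\le\tau,\,|x|\le R}|\nabla_x v_f(t,x)|<\infty$ for every $R>0$; combined with the smoothness of $G(\cdot,s)f$ for $t>s$ this yields $u_f(t,\cdot)\in C^1(\R^d)$ for all $t\in(s,\tau]$.

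Step two: I would upgrade $g$ to be Lipschitz in balls, uniformly in time away from $s$. Fix $s_1\in(s,\tau)$. By estimate \eqref{Friedman} with $\eta=1$ one has $\sup_{s_1\le t\le\tau}\|\nabla_x G(t,s)f\|_{L^\infty(B_R)}<\infty$, so together with Step one $\sup_{s_1\le t\le\tau}\|\nabla_x u_f(t,\cdot)\|_{L^\infty(B_R)}<\infty$; since $\psi(t,\cdot)$ is locally Lipschitz uniformly with respect to $t\in[s,\tau]$ (Hypothesis \ref{hyp_c_b}, after covering $[s,\tau]$ by finitely many unit intervals) and $\|u_f\|_\infty<\infty$, it follows that $g(t,\cdot)$ is Lipschitz continuous in $\overline{B}_R$, uniformly with respect to $t\in[s_1,\tau]$. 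Writing, for $t\ge s_1$, $v_f(t,\cdot)=G(t,s_1)v_f(s_1,\cdot)+\int_{s_1}^t G(t,r)g(r,\cdot)\,dr$ via the evolution law $G(t,r)=G(t,s_1)G(s_1,r)$, the first term is $C^{1,2}$ on $(s_1,\tau]\times\R^d$ and solves $D_tw=\A(t)w$ (here $v_f(s_1,\cdot)\in C_b(\R^d)$ by Proposition \ref{weak-con}), while the second term, by Proposition \ref{linear_reg}(ii) with $a=s_1$, has continuous $D_{ij}$ and $D_t$ on $[s_1,\tau]\times\R^d$ and satisfies $D_tw=\A(t)w+g$. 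Hence $u_f\in C^{1,2}((s_1,\tau]\times\R^d)$ with $D_t u_f=\A(t)u_f+\psi(\cdot,u_f)$ there; since $s_1>s$ is arbitrary and $u_f\in C([s,\tau]\times\R^d)$, $u_f$ is a classical solution.

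For the gradient bounds under Hypothesis \ref{smooth} I would estimate the two summands of $u_f=G(\cdot,s)f+v_f$ separately. The function $g=\psi(\cdot,u_f)$ lies in $C_b([s,\tau]\times\R^d)$ (its sup-norm is controlled through $\sup_{[s,\tau]}|\psi(\cdot,0)|$, $\|u_f\|_\infty$ and the uniform local Lipschitz bound on $\psi$), so Proposition \ref{linear_reg}(iii) gives $\sup_{s\le t\le\tau,\,x\in\R^d}|\nabla_x v_f(t,x)|<\infty$, in particular $(t-s)^{1/2}|\nabla_x v_f(t,x)|$ is bounded on $(s,\tau]\times\R^d$. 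For the linear part, \eqref{C0-C1} gives $(t-s)^{1/2}\|\nabla_x G(t,s)f\|_\infty\le K_1\|f\|_\infty$ when $f\in C_b(\R^d)$, which proves \eqref{uno}, and \eqref{C1-C1} gives $\|\nabla_x G(t,s)f\|_\infty\le K_2\|f\|_{C^1_b(\R^d)}$ when $f\in C^1_b(\R^d)$, which proves \eqref{due}.

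I expect the delicate point to be the behaviour near $t=s$: for merely bounded $f$ the function $G(\cdot,s)f$ is not uniformly $C^1$ up to $t=s$, so $g=\psi(\cdot,u_f)$ cannot be shown to be Hölder continuous uniformly on all of $[s,\tau]$ and Proposition \ref{linear_reg}(ii) cannot be applied directly with $a=s$; this forces the restart at $s_1>s$ and the use of the evolution law to peel off the smoothing term $G(t,s_1)v_f(s_1,\cdot)$. Obtaining the uniform-in-$t$ spatial Lipschitz estimate for $u_f$ on $[s_1,\tau]$ (Step one together with \eqref{Friedman}) is the real technical core; the remaining parts are bookkeeping with the estimates already established in Propositions \ref{puffpuff}, \ref{uni-es} and \ref{linear_reg}.
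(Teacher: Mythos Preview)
Your proof is correct and follows essentially the same strategy as the paper: decompose $u_f=G(\cdot,s)f+v_f$, use Proposition \ref{linear_reg}(i) to gain one spatial derivative, restart at an intermediate time to make $g=\psi(\cdot,u_f)$ uniformly Lipschitz in balls, and then invoke Proposition \ref{linear_reg}(ii). The only organizational difference is that the paper first treats $f\in C^1_b(\R^d)$ as a separate Step 1 (so that Proposition \ref{linear_reg}(ii) applies directly on $[s,\tau]$) and in Step 2 restarts the mild solution $u_f$ itself at $s+\varepsilon$ via uniqueness, writing $u_f(t,\cdot)=G(t,s+\varepsilon)u_f(s+\varepsilon,\cdot)+\int_{s+\varepsilon}^t G(t,r)g(r,\cdot)\,dr$, whereas you restart the convolution $v_f$ at $s_1$ through the evolution law; the two restart mechanisms are equivalent and the gradient estimates are obtained in the same way from Propositions \ref{uni-es} and \ref{linear_reg}(iii).
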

\begin{proof}
We split the proof in two steps. In the first step we assume that $f \in C^1_b(\Rd)$. In the second step we complete the proof.

{\em Step 1.} Let $f \in C^1_b(\Rd)$. For any  $t \in (s,\tau]$ and $x \in \Rd$ we set
$g(t,x):=\psi(t,u_f(t,x))$ and we define $v$ as in \eqref{convolution}.
Thus, $u_f(t,x)=(G(t,s)f)(x)+v(t,x)$ for any $t \in [s, \tau]$ and $x \in \Rd$.

Let us notice that, since $u_f$ belongs to $C_b([s,\tau]\times \Rd)$ and $\psi $ satisfies Hypothesis \ref{hyp_c_b}, then  $g \in C_b([s,\tau]\times \Rd)$, hence Proposition \ref{linear_reg}  yields that $ v(t, \cdot)\in C^1 (\R^d)$ for every $t\in [s, \tau ]$, and $\sup_{s\leq t\leq \tau}\|\, |\nabla_xv(t, \cdot)|\,\|_{L^{\infty}(B_R)} < +\infty$ for every $R>0$.  If also Hypothesis \ref{smooth} holds, still Proposition \ref{linear_reg}  yields that $ v(t, \cdot)\in C^1_b (\R^d)$ for every $t\in [s, \tau ]$, and $\sup_{s\leq t\leq \tau}|\nabla_xv(t, \cdot)|_{\infty} < +\infty$.

By \eqref{Friedman} with $\eta =1$ (if only Hypothesis \ref{base} holds) and by \eqref{C1-C1} (if also Hypothesis \ref{smooth} holds)
$ G(t,s)f$ enjoys the same properties, and so does $u_f$. Therefore, \eqref{due} holds if both Hypotheses \ref{base} and \ref{smooth} hold, and it is replaced by
$$\sup_{s\leq t\leq \tau}\|\, |\nabla_xu_f(t, \cdot)|_{L^{\infty}(B_R)} < +\infty, \quad \forall R>0$$
if only Hypothesis \ref{base} holds. In both cases,
 $g(t,\cdot)$ is Lipschitz continuous (hence, $\theta$-H\"older continuous) in each ball $B_R$ uniformly with respect to $t \in [s,\tau]$. In fact,   for any $x,y \in B_R$,
\begin{align*}
|g(t,x)-g(t,y)|&=|\psi(t,u_f(t,x))-\psi(t,u_f(t,y))|\\
&\le L  |u_f(t,x)-u_f(t,y)|\\
& \le L  \sup_{t \in [s,\tau]}\|\,|\nabla_x u_f(t,\cdot)|\, \|_{L^\infty(B_R)}|x-y|,
\end{align*}
where
$$L= \sup\{ |\psi(t,\xi)-\psi(t,\eta)|/|\xi-\eta|: \;s\leq t\leq \tau, \;|\xi|, \;|\eta|\leq \|u_f\|_{\infty}, \;\xi\neq \eta\}$$
is finite by Hypothesis \ref{hyp_c_b}.
Thus, Proposition \ref{linear_reg}  yields that   $D_{ij} v$ belongs to $C ([s,s+\delta]\times \Rd)$ for $i, j=1, \ldots, d$ and that $v_t=\mathcal{A}(t)v+\psi (t,u_f)$ in $[s,s+\delta]\times \Rd$. Consequently, $u_f$ has the same regularity of $v$ and it is a classical solution of   \eqref{p_nonlin} in $[s,s+\delta]$.

{\em Step 2.} Now, let $f \in C_b(\Rd)$. As before, $ v(t, \cdot)\in C^1 (\R^d)$ for every $t\in [s, \tau ]$, and $\sup_{s\leq t\leq \tau}|\nabla_x v(t, \cdot)|_{L^{\infty}(B_R)} < +\infty$ for every $R>0$.  By \eqref{Friedman}, $ G(t,s)f$ enjoys the same properties, and \eqref{uno} follows.

Fix $\varepsilon \in (0,\tau -s)$.
Since  the mild bounded solution in $[s+\varepsilon,  \tau]$ is unique, then
\begin{equation*}
u_f(t,x)= (G(t,s+\varepsilon)u_f(s+\varepsilon,\cdot))(x)+ \int_{s+\varepsilon}^t (G(t,r)\psi(r, u_f(r, \cdot)))(x)\, dr,\
\end{equation*}
for $s+\varepsilon \leq t\leq \tau$, $x\in \R^d$. Since $u_f(s+\varepsilon, \cdot) \in C^{1}_b(\Rd)$, by step 1 applied in the interval $[s+\varepsilon,\tau]$ the restriction of $u_f$ to the interval $[s+\varepsilon,\tau]$ belongs to $ C^{1,2}((s+\varepsilon,\tau]\times \Rd)$ and it is a bounded classical solution of   problem \eqref{p_nonlin} in $[s+\varepsilon,\tau]$. The claim follows.
\end{proof}

\begin{rmk}
Let  hypotheses \ref{base}, \ref{hyp_c_b}, \ref{hyp1_F}  be satisfied. Theorem \ref{classical} and estimate \eqref{terremoto} imply that for every $f\in L^p(\R^d, \mu_s)$ the mild solution of to \eqref{p_nonlin} given by Theorem \ref{loc-exis} is a strong solution, in the sense that $u_f $ is the limit of a sequence of classical solutions $(u_{f_n})$ in $L^p([s, \tau]\times \R^d, \nu)$. It is sufficient to approach $f$ in $L^p(\R^d, \mu_s)$ by a sequence of functions $f_n\in C_b(\R^d)$. More precisely, estimate  \eqref{terremoto}  implies that
$$\lim_{n\to \infty} \sup_{s\leq t\leq \tau} \|u_f(t, \cdot) - u_{f_n}(t, \cdot)\|_{L^p(\R^d, \mu_t)} =0.$$
\end{rmk}

Let $f\in C_b(\Rd)$. The maximal interval of existence of a mild solution to \eqref{p_nonlin} is
$$I(f):=\cup\{[s, s+a]:\, a>0,\, \eqref{p_nonlin}\,\,{\rm has\,\, a\,\,unique\,\, mild\,\, solution\,\,} u_a \in C_b([s,s+a]\times \Rd) \}$$
and the maximally defined solution $u_f:I(f)\times \R^d\to \R$ to \eqref{p_nonlin} is  defined by
$$u_f(t,x) = u_a(t,x), \quad t\in I(f), \;0\leq t\leq a, \;x\in \R^d. $$
Moreover we set
$$\tau_f:= \sup I(f).$$
Thanks to Proposition \ref{weak-con}, the standard procedure to show that either $I(f) = [s, +\infty)$ or  $\|u_f(t, \cdot )\|_{\infty}$ blows up as $t\to \tau_f$ works as well in our situation.
For the sake of completeness we write down a proof.

\begin{lemm}\label{unboun}
Assume that Hypotheses \ref{base} and \ref{hyp_c_b} are satisfied, and let  $f \in C_b(\Rd)$. If $\tau_f<+\infty$, then  $\lim_{t\to \tau_f }\|u_f(t, \cdot)\|_\infty = +\infty$.
\end{lemm}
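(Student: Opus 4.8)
The plan is to argue by contradiction in the usual blow-up style for semilinear parabolic equations. Suppose $\tau_f<+\infty$ but $\|u_f(t,\cdot)\|_\infty$ does \emph{not} tend to $+\infty$ as $t\to\tau_f$. Then there is a sequence $t_n\uparrow\tau_f$ and a constant $M>0$ with $\|u_f(t_n,\cdot)\|_\infty\le M$ for all $n$. My first goal is to upgrade this to a uniform bound $\|u_f(t,\cdot)\|_\infty\le M'$ for \emph{all} $t\in[s,\tau_f)$. This follows from a Gronwall argument: using the mild formula \eqref{defi_mild} between $t_n$ and $t\in[t_n,\tau_f)$, the contraction estimate \eqref{est_uni}, the local Lipschitz bound \eqref{Lip} on $\psi$ (with $L_R$ for $R$ slightly larger than $M$) and the sublinear bound $|\psi(r,\xi)|\le |\psi(r,0)|+L_R|\xi|$, together with Lemma \ref{misu} (to know $t\mapsto\|u_f(t,\cdot)\|_\infty$ is measurable) and Lemma \ref{Gronwall}(i), one gets $\|u_f(t,\cdot)\|_\infty\le (M+\|\psi(\cdot,0)\|_{L^1(s,\tau_f)})e^{L_R(\tau_f-s)}=:M'$ on $[t_n,\tau_f)$; letting $n\to\infty$ covers all of $[s,\tau_f)$.

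Next I would use this uniform bound to extend the solution past $\tau_f$, contradicting maximality. The key point is that the existence time $\delta$ produced by Theorem \ref{exi_Cb} depends on the initial datum only through a bound on its sup-norm: given $\|\overline f\|_\infty$ one takes $R\ge 8\|\overline f\|_\infty$ and $\delta_R$ depending only on $R$, $L_R$ and $\sup_{[s,s+1]}|\psi(\cdot,0)|$. Concretely, fix $\sigma\in(0,\delta_{R_0})$ with $R_0:=8M'$, pick $t_n$ with $\tau_f-t_n<\sigma/2$, and apply Theorem \ref{exi_Cb} with initial time $t_n$ and initial datum $u_f(t_n,\cdot)\in C_b(\R^d)$, whose norm is $\le M'$. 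This yields a mild solution on $[t_n,t_n+\delta]$ with $\delta$ bounded below independently of $n$, hence on an interval reaching strictly beyond $\tau_f$. By the uniqueness part of Theorem \ref{exi_Cb} this new solution agrees with $u_f$ on the overlap $[t_n,\tau_f)$, so it glues with $u_f$ to give a mild solution in $C_b([s,t_n+\delta]\times\R^d)$ with $t_n+\delta>\tau_f$. This contradicts $\tau_f=\sup I(f)$.

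The main obstacle is the first step — establishing the \emph{uniform} (not merely subsequential) a priori bound on $[s,\tau_f)$ — because here one must be careful that the Lipschitz/growth constant $L_R$ stays fixed: one cannot a priori take $R$ depending on the unknown bound, so the argument is: \emph{first} note that near $\tau_f$ (on each $[t_n,\tau_f)$) the solution cannot jump, using the structure of the Gronwall inequality to get a closed bound that is independent of $n$; then let $n\to\infty$. A small technical nuisance is measurability of the relevant sup-norm functions, but this is exactly what Lemma \ref{misu} provides. Once the uniform bound is in hand, the continuation step is completely routine thanks to the $R$-uniform lower bound on the existence time in Theorem \ref{exi_Cb}.
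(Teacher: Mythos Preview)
Your Step 1 has a gap. The bound $|\psi(r,\xi)|\le |\psi(r,0)|+L_R|\xi|$ is valid only for $|\xi|\le R$, so applying it with $\xi=u_f(r,x)$ on $[t_n,t]$ already presupposes $\|u_f(r,\cdot)\|_\infty\le R$ there --- which is exactly what you are trying to establish. The remark that ``the solution cannot jump'' does not resolve this circularity; the map $t\mapsto\|u_f(t,\cdot)\|_\infty$ is only known to be measurable (Lemma \ref{misu}), not continuous, so a naive open--closed bootstrap is not available.

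Fortunately Step 1 is unnecessary: your Step 2 already does the work with $M$ in place of $M'$. Since the existence time in Theorem \ref{exi_Cb} depends on the initial datum only through a bound on its sup-norm, applying it at initial time $t_n$ with datum $u_f(t_n,\cdot)$ (norm $\le M$) yields a mild solution in $C_b([t_n,t_n+\delta]\times\R^d)$ with $\delta>0$ independent of $n$. For $n$ large, $t_n+\delta>\tau_f$; uniqueness on each $[t_n,t]$ with $t<\tau_f$ shows this solution agrees with $u_f$ on $[t_n,\tau_f)$ and hence glues with $u_f|_{[s,t_n]}$ to a mild solution in $C_b([s,t_n+\delta]\times\R^d)$, contradicting the maximality of $\tau_f$. (As a byproduct this also gives the uniform bound $\|u_f(t,\cdot)\|_\infty\le 8M$ on $[t_n,\tau_f)$ --- the local solution lives in $Y_R$ with $R=8M$ --- so the conclusion of Step 1 comes for free.)

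The paper argues differently. It takes as contradiction hypothesis that $\|u_f(t,\cdot)\|_\infty$ is bounded on all of $[s,\tau_f)$ --- strictly speaking this is only the negation of $\limsup_{t\to\tau_f}\|u_f(t,\cdot)\|_\infty=+\infty$, not of $\lim=+\infty$. From this it bounds $(t,x)\mapsto\psi(t,u_f(t,x))$, invokes Proposition \ref{weak-con} to extend $u_f$ continuously up to $t=\tau_f$, and then restarts Theorem \ref{exi_Cb} at the endpoint $\tau_f$ itself. Your route --- restarting at $t_n$ with a uniform lower bound on the existence time --- bypasses the continuity-extension step and, once Step 1 is dropped, actually proves the stated $\lim$ (not merely $\limsup$) version directly.
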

\begin{proof}
Assume by contradiction that $\|u_f(t, \cdot)\|_\infty$ is bounded. Then the function $(t,x) \mapsto \psi(t, u_f(t,x))$ belongs to $C_b([s,\tau_f)\times \Rd)$, indeed it is continuous and
\begin{align}\label{pianta}
\|\psi(t,u_f(t, \cdot))\|_\infty&\le \|\psi(t,u_f(t,\cdot))-\psi(t,0)\|_\infty+ |\psi(t,0)|\nnm\\
&\le L \|u_f(t,\cdot)\|_\infty+|\psi(t,0)|.
\end{align}
and the right-hand side of \eqref{pianta} is bounded in $I(f)$.
Using Lemma \ref{weak-con} we  extend the mild solution $u_f$ by continuity at $t=\tau_f$. By Theorem  \ref{exi_Cb} there exists $\delta>0$ such that the problem
\begin{equation*}
\left\{
\begin{array}{ll}
D_t v(t)={\mathcal{A}}(t)v(t)+\psi(t,v(t)),\quad\quad & t\in (\tau_f,+\infty),\\[1mm]
v(\tau_f)=u(\tau_f),
\end{array}\right.
\end{equation*}
has a unique mild solution $v \in C_b([\tau_f,\tau_f+\delta]\times \Rd)$. The  function
\begin{equation*}
w(t,x)=\left\{
\begin{array}{ll}
u_f(t,x),\quad\quad & t\in [s,\tau_f),\;x\in \R^d,\\[1mm]
v(t,x),\quad\quad & t\in [\tau_f,\tau_f+\delta], \;x\in \R^d,
\end{array}\right.
\end{equation*}
is a mild solution of \eqref{p_nonlin} belonging to $C_b([s,\tau_f+\delta]\times \Rd)$, contradicting the definition of $\tau_f$. Hence the claim is proved.
\end{proof}

\begin{prop}\label{global}
Assume that Hypotheses \ref{base} and \ref{hyp_c_b} are satisfied,  and that for every $s\in I$, $\tau >s$ there exists a positive constant $h$ such that
\begin{equation}\label{suff_glob}
|\psi(t,\xi)|\le h(1+|\xi|), \qquad\,\, t \in [s,\tau ],\, \xi \in \R.
\end{equation}
Then $I(f) = [s, +\infty)$ for any $f \in C_b(\Rd)$.
\end{prop}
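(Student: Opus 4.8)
The plan is to argue by contradiction via the blow-up alternative of Lemma~\ref{unboun}. Suppose $\tau_f<+\infty$; then $\lim_{t\to\tau_f}\|u_f(t,\cdot)\|_\infty=+\infty$, so it suffices to show that $t\mapsto\|u_f(t,\cdot)\|_\infty$ stays bounded on $[s,\tau_f)$, which will be the desired contradiction.

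Fix $\tau\in(s,\tau_f)$, so that $u_f\in C_b([s,\tau]\times\Rd)$ satisfies the mild equation \eqref{defi_mild} on $[s,\tau]$. First I would take the sup-norm in the space variable in \eqref{defi_mild}, use the contractivity estimate \eqref{est_uni} for the operators $G(t,r)$, and then invoke the linear growth bound \eqref{suff_glob}: this gives, for every $t\in[s,\tau]$,
$$\|u_f(t,\cdot)\|_\infty\le\|f\|_\infty+\int_s^t\|\psi(r,u_f(r,\cdot))\|_\infty\,dr\le\|f\|_\infty+h(\tau-s)+h\int_s^t\|u_f(r,\cdot)\|_\infty\,dr.$$
The functions $r\mapsto\|\psi(r,u_f(r,\cdot))\|_\infty$ and $r\mapsto\|u_f(r,\cdot)\|_\infty$ appearing here belong to $L^\infty(s,\tau)$ by Lemma~\ref{misu}, so these integrals are well defined.

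Next I would set $w(t):=\|u_f(t,\cdot)\|_\infty$ and apply the $L^\infty$ Gronwall Lemma~\ref{Gronwall}(i) with $k:=\|f\|_\infty+h(\tau-s)$ and the constant $h$ from \eqref{suff_glob}, obtaining $w(t)\le e^{h(\tau-s)}\big(\|f\|_\infty+h(\tau-s)\big)$ for a.e.\ $t\in[s,\tau]$. Since the right-hand side is nondecreasing in $\tau$, this bound holds, for a.e.\ $t\in[s,\tau_f)$, with $\tau$ replaced by $\tau_f$; in particular $w$ is essentially bounded on $[s,\tau_f)$. On the other hand Lemma~\ref{unboun} forces $\|u_f(t,\cdot)\|_\infty$ to be arbitrarily large on a full left-neighbourhood of $\tau_f$, hence on sets of positive measure — a contradiction. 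Therefore $\tau_f=+\infty$, i.e.\ $I(f)=[s,+\infty)$.

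This is a standard a priori estimate plus Gronwall plus the blow-up alternative, and I do not expect a genuine obstacle. The only point requiring a little care is the measurability of $t\mapsto\|u_f(t,\cdot)\|_\infty$ (and of $t\mapsto\|\psi(t,u_f(t,\cdot))\|_\infty$), which is exactly what Lemma~\ref{misu} was isolated for; beyond that, everything is routine.
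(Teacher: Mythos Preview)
Your proof is correct and follows essentially the same route as the paper: contradiction via Lemma~\ref{unboun}, the sup-norm estimate from the mild formulation combined with~\eqref{est_uni} and~\eqref{suff_glob}, then the $L^\infty$ Gronwall Lemma~\ref{Gronwall}(i) with measurability supplied by Lemma~\ref{misu}. The only cosmetic difference is that the paper takes $\tau=\tau_f$ directly in~\eqref{suff_glob} rather than first fixing $\tau<\tau_f$ and then passing to the limit; your extra care with the ``a.e.'' conclusion of Gronwall versus the everywhere blow-up from Lemma~\ref{unboun} is a welcome clarification.
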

\begin{proof}
Assume by contradiction that $\tau_f<+\infty$, and take $\tau = \tau_f$ in \eqref{suff_glob}. By Lemma \ref{misu} the function $r\mapsto \|u_f(r,\cdot)\|_{\infty}$ is measurable in $I(f)$, and  using \eqref{suff_glob} we get
$$
\|u_f(t,\cdot)\|_\infty \le \|f\|_\infty+ h(\tau_f-s)+  h\int_s^t\|u_f(r,\cdot)\|_{\infty}\,dr,\quad t \in I(f).
$$
Hence, Lemmas \ref{misu} and  \ref{Gronwall} yield
$$\|u_f(t,\cdot)\|_{\infty}\le c (\|f\|_\infty+(\tau_f-s)),\qquad\;\, t \in [s,\tau_f),$$
for some positive constant $c$ independent of $f$. So, $u_f$ is bounded, contradicting Lemma \ref{unboun}.
\end{proof}

As in the case of bounded coefficients, condition \eqref{suff_glob} may be considerably weakened (namely, replaced by a one-sided condition)  if the mild solution is classical. The key assumption here is Hypothesis \ref{base}(iii), that allows to extend the usual maximum principle arguments (e.g., \cite[Thm. 2.9, Ch. 1]{LadSolUra68Lin}) to our situation.

\begin{thm}
\label{Exlarge}
Let Hypotheses \ref{base}  and \ref{hyp_c_b}
 hold. Moreover, assume that for every $s  \in I$, $\tau >s$ there exists $k>0$ such that
\begin{equation}\label{alge}
\xi \psi(t,\xi )\le k(1+\xi^2),\qquad\;\, t\in [s,\tau],\,\xi \in \R .
\end{equation}
Then $I(f) = [s,   +\infty)$ for every
$f \in C_b(\Rd)$.
\end{thm}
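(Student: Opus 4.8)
The plan is to argue by contradiction. Suppose $\tau_f<+\infty$; since $I$ is a right halfline or $\R$ and $s<\tau_f<+\infty$, the interval $[s,\tau_f]$ is contained in $I$, so \eqref{alge} provides a constant $k>0$ valid for $t\in[s,\tau_f]$. I will establish the a priori estimate
$$u_f(t,x)^2\le e^{2k(\tau_f-s)}\bigl(\|f\|_\infty^2+1\bigr),\qquad t\in[s,\tau_f),\ x\in\Rd,$$
which contradicts Lemma \ref{unboun}. Fix $T\in(s,\tau_f)$. By Theorem \ref{classical} (whose hypotheses \ref{base} and \ref{hyp_c_b} are in force) the restriction of $u:=u_f$ to $[s,T]$ is a classical solution, so $u\in C^{1,2}((s,T]\times\Rd)\cap C_b([s,T]\times\Rd)$, and the bound I obtain will not depend on $T$.

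First I would test the equation against $u$ itself. Set $v:=u^2$, so that $v\in C^{1,2}((s,T]\times\Rd)\cap C_b([s,T]\times\Rd)$, and use the elementary identity $\A(t)(u^2)=2u\,\A(t)u+2\langle Q(t,\cdot)\nabla u,\nabla u\rangle$ together with $D_tv=2u\,D_tu=2u\bigl(\A(t)u+\psi(t,u)\bigr)$ to get
$$D_tv-\A(t)v=-2\langle Q(t,\cdot)\nabla u,\nabla u\rangle+2u\psi(t,u)\le 2k(1+v)\qquad\text{on }(s,T]\times\Rd,$$
where I used that $Q(t,x)$ is positive semidefinite (Hypothesis \ref{base}(ii)) and the one-sided bound \eqref{alge}. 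To absorb the reaction term I pass to
$$\hat v(t,x):=e^{-2k(t-s)}v(t,x)-\bigl(1-e^{-2k(t-s)}\bigr),\qquad (t,x)\in[s,T]\times\Rd.$$
Since $e^{-2k(t-s)}$ and $1-e^{-2k(t-s)}$ are independent of $x$, a direct computation gives $D_t\hat v-\A(t)\hat v=e^{-2k(t-s)}\bigl(D_tv-\A(t)v-2k(1+v)\bigr)\le 0$ on $(s,T]\times\Rd$, while $\hat v(s,\cdot)=f^2\le M^2$ with $M:=\|f\|_\infty$, and $\hat v$ is bounded on $[s,T]\times\Rd$.

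Next I would apply the maximum principle to the bounded subsolution $\hat v$ of $D_tw=\A(t)w$, which is where Hypothesis \ref{base}(iii) enters. For $\eps,\delta>0$ consider $w:=\hat v-M^2-\eps\varphi-\eps(a+\delta)(t-s)$. Using $\A(t)\varphi\le a-c\varphi$ and $D_t\hat v-\A(t)\hat v\le 0$ one gets $D_tw-\A(t)w\le -\eps\delta-\eps c\varphi<0$ on $(s,T]\times\Rd$; moreover $w(s,\cdot)\le -\eps\varphi\le 0$, and since $\hat v$ is bounded and $\varphi(x)\to+\infty$ as $|x|\to+\infty$, we have $w(t,x)\to-\infty$ as $|x|\to+\infty$, uniformly for $t\in[s,T]$. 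Hence $w$ attains its maximum over $[s,T]\times\Rd$ at some point $(t_0,x_0)$. If $t_0>s$, then at $(t_0,x_0)$ one has $D_tw\ge 0$, $\nabla w=0$ and $D^2w\le 0$, so $\A(t_0)w(t_0,x_0)=\mathrm{Tr}\bigl(Q(t_0,x_0)D^2w(t_0,x_0)\bigr)\le 0$ (because $Q\ge 0$), contradicting $D_tw-\A(t)w<0$ there; therefore $t_0=s$ and $\max w=w(s,x_0)\le 0$. Thus $\hat v\le M^2+\eps\varphi+\eps(a+\delta)(t-s)$ on $[s,T]\times\Rd$, and letting $\delta\to 0$ and then $\eps\to 0$ gives $\hat v\le M^2$, i.e.
$$u_f(t,x)^2=v(t,x)\le e^{2k(t-s)}\bigl(M^2+1-e^{-2k(t-s)}\bigr)\le e^{2k(\tau_f-s)}\bigl(\|f\|_\infty^2+1\bigr)$$
on $[s,T]\times\Rd$. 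Since $T\in(s,\tau_f)$ is arbitrary this holds on $[s,\tau_f)\times\Rd$, so $\sup_{s\le t<\tau_f}\|u_f(t,\cdot)\|_\infty<+\infty$, contradicting Lemma \ref{unboun}. Hence $\tau_f=+\infty$, that is $I(f)=[s,+\infty)$.

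I expect the main obstacle to be the maximum principle on the unbounded domain $\Rd$: one must guarantee that the perturbed function $w$ actually attains its maximum and that the first- and second-order information at the maximizer has the expected sign. This is precisely what the coercivity of the Lyapunov function $\varphi$ in Hypothesis \ref{base}(iii) is meant to handle; the algebraic identity for $\A(t)(u^2)$ and the exponential rescaling that removes the reaction term are routine. One could alternatively invoke a comparison principle for $G(t,s)$, but since none is recorded above in the form needed, carrying out the $\eps\varphi$ argument directly is preferable.
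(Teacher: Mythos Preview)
Your proof is correct. Both your argument and the paper's proceed by contradiction via Lemma \ref{unboun} and use the Lyapunov function $\varphi$ from Hypothesis \ref{base}(iii) to force a maximum principle on the unbounded domain; the essential mechanism is the same. The genuine difference is that you work with $v=u^2$, so that the one-sided condition \eqref{alge} enters directly through $2u\psi(t,u)\le 2k(1+v)$ and a single application of the maximum principle bounds $|u|$ from both sides at once. The paper instead applies the maximum principle to $e^{-\lambda(t-s)}u-\varphi/n$ (and then to $e^{-\lambda(t-s)}u+\varphi/n$), multiplying the equation at the maximizer by a positive factor in order to invoke \eqref{alge}, and thus needs two symmetric passes to control $u$ from above and from below. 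Your squaring trick is a clean shortcut; the paper's version, on the other hand, gives separate control of $\sup u$ and $\inf u$ in terms of $\sup f$ and $\inf f$, which is slightly sharper information and is reused later (Theorem \ref{globalC0}) where a pointwise bound $|u_f|\le e^{\psi_0(t-s)}\|f\|_\infty$ is wanted rather than merely boundedness.
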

\begin{proof}
Assume that $\tau_f$ is finite, and let $k$ be the constant in \eqref{alge} with $\tau = \tau_f$.
In view of Lemma \ref{unboun}, it suffices to prove that $t \mapsto \|u_f(t)\|_\infty$ is bounded in $I(f)$. First we prove that $u_f$ is bounded from above. To this aim, we fix $b\in (0, \tau_f-s)$ and $\lambda >k$, and we set
\begin{equation}
\label{vn}
v_n(t,x)= e^{-\lambda(t-s)}u_f(t,x) -\frac{\varphi(x)}{n},\quad\;\, t\in [s,s+b],\, x \in \Rd.
\end{equation}
Then,
\begin{equation}
\label{multa}
\begin{array}{l}
D_t v_n(t,x) -(\A(t) v_n)(t, x) =  \ds -\lambda\bigg(v_n (t,x)+ \frac{\varphi(x)}{n}\bigg)
\\
\\
+\ds  \frac{(\A(t) \varphi)(x)}{n} + e^{-\lambda(t-s)}\psi(t, (v_n(t, x) +\varphi(x)/n)e^{\lambda(t-s)})
\end{array}
\end{equation}
for $s< t\leq s+b$, $x\in \R^d$. Since $u_f$ is bounded and $\lim_{|x|\to \infty}\varphi(x) = +\infty$, then $v_n$ has a maximum point $(t_n, x_n)$. If
$v_n(t_n, x_n) \leq 0$ for every $n$, then $ u_f(t,x) \leq 0$ for every $(t,x)\in  [s,s+b]\times \R^d$. Assume that  $v_n(t_n, x_n) > 0$ for some $n$. If $t_n=s$, then $v_n(t_n, x_n) \leq \sup f$. If $t_n>s$, $D_t v_n(t_n,x_n) -(\A(t_n) v_n)(t_n, x_n)\geq 0$ so that, multiplying both sides of \eqref{multa} at $(t_n,x_n)$ by
 $v_n(t_n,x_n)+ \varphi(x_n)/n>0$ and using Hypohesis \ref{base}(iii)  and \eqref{alge},
 we get
$$\begin{array}{lll}
0 & \leq & \ds  -\lambda\bigg(v_n (t_n,x_n)+ \frac{\varphi(x_n)}{n}\bigg)^2 + \frac{a-c \varphi(x_n)}{n}\bigg(v_n (t_n,x_n)+ \frac{\varphi(x_n)}{n}\bigg)
\\
\\
& & \ds + k\bigg( 1+ \bigg(v_n (t_n,x_n)+ \frac{\varphi(x_n)}{n}\bigg)^2\bigg)
\end{array}$$
which implies
$$(\lambda -k) \bigg(v_n (t_n,x_n)+ \frac{\varphi(x_n)}{n}\bigg)^2 -\frac{a}{n} \bigg(v_n (t_n,x_n)+ \frac{\varphi(x_n)}{n}\bigg) \leq k.$$
Therefore, $v_n (t_n,x_n)+ \varphi(x_n)/n \leq \overline{\xi}_n$, where $\overline{\xi}_n$ is the positive solution to $(\lambda -k)\xi^2 -a \xi /n=k$.
So, we get
$$v_n(t,x) \leq \max\{ 0, \sup f, \overline{\xi}_n\}, \quad s\leq t\leq t+b, \;x\in \R^d, $$
and letting  $n\to \infty$,
$$ u_f(t,x)\leq e^{\lambda (\tau_f-s)} \max\{ 0, \sup f, \sqrt{k/(\lambda -k)}\}, \quad s\leq t\leq t+b, \;x\in \R^d, $$
which is an upper bound for $ u_f$, independent of $b$. The same procedure, with $v_n$ replaced  by $  e^{-\lambda(t-s)}u_f(t,x) +  \varphi(x)/n$, gives a similar lower bound. Since $b$ is arbitrary, we get $\|u_f \|_{C_b([s,\tau(f))\times \R^d)}<+\infty$, and the claim is so proved by contradiction.
\end{proof}

\section{Stability of the null solution}

In this section we  assume that $\psi(t, 0)=0$ for every $t\in I$, and we study the stability of the null solution to
\begin{equation}\label{eq}
  D_t u(t,x)=\mathcal{A}(t)u(t,x)+\psi(t, u(t,x)),\qquad\;\, t \in (s,+\infty), x \in \Rd,
\end{equation}
 in the space $C_b(\Rd)$ and in the  spaces   $L^p(\R^d, \mu_t)$.

The definition of stability, instability and asymptotic stability in $C_b(\R^d)$ is the usual one; the definition of stability in our time dependent $L^p$ spaces is less standard.

\begin{defi}\label{stabili_b}
Let $I(f)=[s,+\infty)$. We say that the trivial solution $u(t)\equiv 0$ of the equation \eqref{eq} is
\begin{enumerate}[\rm (i)]
\item
{\emph{stable in}} $C_b(\Rd)$ if for any $\varepsilon>0$ and $s\in I$ there exists $\delta>0$ such that if $f\in C_b(\Rd)$ satisfies $\|f\|_\infty\leq \delta$ then $\|u_f(t,\cdot)\|_\infty\leq \varepsilon$ for any $t\ge s$;
\item
{\emph{stable in }}$L^p(\Rd,\mu_t)$ if for any $\varepsilon>0$ and $s\in I$ there exists $\delta>0$ such that if $f\in L^p(\Rd,\mu_s)$ satisfies $\|f\|_{L^p(\Rd,\mu_s)}\leq \delta$ then $\|u_f(t,\cdot)\|_{L^p(\Rd,\mu_t)}\leq \varepsilon$ for any $t\ge s$;

\item{\emph{asymptotically stable in}} $C_b(\Rd)$ if (i) holds and there exists $\delta>0$ such that for any $f\in C_b(\Rd)$ with $\|f\|_\infty\leq \delta$ the function $u_f(t,\cdot)$ converges to $0$ uniformly in $\Rd$ as $t \to +\infty$;
    \item{\emph{asymptotically stable in}} $L^p(\Rd,\mu_t)$ if (ii) holds and there exists $\delta>0$ such that if $f\in L^p(\Rd,\mu_s)$ satisfies $\|f\|_{L^p(\Rd,\mu_s)}\leq \delta$ then $\|u_f(t,\cdot)\|_{L^p(\Rd,\mu_t)}$ converges to $0$ as $t \to +\infty$;
\
\item{\emph{unstable in }} $C_b(\Rd)$ $($resp. $L^p(\Rd,\mu_t))$ if it is not stable in $C_b(\Rd)$ $($resp. $L^p(\Rd, \mu_t))$.
\end{enumerate}
\end{defi}

\begin{rmk}  It is clear that each sufficient condition which guarantees that the trivial solution of the ordinary differential equation $u'=\psi(t, u)$ is unstable, also guarantees that the trivial solution of the partial differential equation $D_t u=\A(t)u+\psi(t, u)$ is unstable.
\end{rmk}

\medskip
In next Theorems \ref{asystable} and \ref{martina} we shall give sufficient conditions for the stability of the trivial solution $u\equiv 0$  in $C_b(\Rd)$ and in $L^p(\Rd, \mu_t)$ respectively.
To this aim we will consider the following assumptions.

\begin{hyp}\label{reg}
\begin{enumerate}[\rm(i)]
\item the function $\psi(t, \cdot)$ is continuously differentiable at $0$ and the function $\partial_y\psi(t,0)=\frac{\partial}{\partial y}\psi(t,y)_{|y=0}$ belongs to $C^{\alpha/2}_{\rm loc}(I)$;
\item $ \sup_{t\in I} \partial_{\xi}\psi(t, 0) =: -\omega_0<0$.
    \end{enumerate}
\end{hyp}

In view of Hypotheses \ref{reg}, we   write   equation \eqref{eq} as
\begin{equation*}
D_tu(t,x)= \mathcal{B}(t)u(t,x)+\Phi(t,u(t,x)),\qquad\;\,t \in (s,+\infty),\, x \in \Rd,
\end{equation*}
where
\begin{equation*}
\mathcal{B}(t)v(x) :=\mathcal{A}(t)v(x)+\partial_y\psi(t,0))v(x), \quad t\in I, \;x\in \R^d,
\end{equation*}
\medskip
and
\begin{equation}\label{fit}
\Phi(t,\xi)= \psi(t,\xi)-\partial_y\psi(t,0)\xi,\quad t\in I, \;\xi \in \R.
\end{equation}

We denote by $G_{\mathcal B}(t,s)$ the evolution operator associated to the family of operators $\mathcal{B}(t)$ in $C_b(\Rd)$. It is easy to show that $G_{\mathcal B}(t,s)$ can be written in terms of   $G(t,s)$ as
\begin{equation}\label{explicit}
G_{\mathcal B}(t,s)f= \exp\Big(\int_s^t \partial_y\psi(\sigma,0)\, d \sigma\Big)G(t,s)f, \qquad\;\, f \in C_b(\Rd), s\in I, \;t\geq s.
\end{equation}
Estimate \eqref{est_uni} yields
\begin{equation*}
\|G_{\mathcal B}(t,s)f\|_{\infty}\le \exp\Big(\int_s^t \partial_y\psi(\sigma,0)\, d \sigma\Big)\|f\|_{\infty},
\end{equation*}
for any $ f\in C_b(\Rd)$ and $t\ge s\in I$.\\

As usual, it will be useful to consider   exponentially weighted $C_b$ spaces. For any $\omega \in \R$ and $s\in I$ we define $C_{\omega}([s,+\infty)\times \Rd)$ as the set of the continuous   functions $v:[s,+\infty) \to C_b(\Rd)$ such that
\begin{equation*}
\|v\|_{C_{\omega}([s,+\infty)\times \Rd)}:=\sup_{t\in[s,+\infty)}e^{\omega (t-s)}\|v(t,\cdot)\|_{\infty}<+\infty.
\end{equation*}
Clearly, $C_{\omega}([s,+\infty)\times \Rd)\subset C_b([s,+\infty)\times \Rd)$ if $\omega \ge 0$.

\begin{prop}\label{prop_lin}
Let Hypotheses \ref{base}  and \ref{reg} hold. Fix $\omega \in [0, \omega_0)$. For any
 $f \in C_b(\Rd)$  and $g \in C_{\omega}([s,+\infty)\times \Rd)$, let $z$ be the unique mild solution of the problem
\begin{equation*}
\left\{
\begin{array}{ll}
D_t z(t,x)=\mathcal{B}(t)z(t,x)+g(t,x),\quad\quad & t\in (s,+\infty),\,x\in \Rd\\[1mm]
z(s,x)= f(x),\quad\quad & x\in \Rd,
\end{array}
\right.
\end{equation*}
Then $z$ belongs to $C_{\omega}([s,+\infty)\times \Rd)$ and
\begin{equation}
\label{omega}
\|z\|_{C_{\omega}([s,+\infty)\times \Rd)}\le  \|f\|_{\infty}+ \frac{1}{\omega_0-\omega} \|g\|_{C_{\omega}([s,+\infty)\times \Rd)}.
\end{equation}
\end{prop}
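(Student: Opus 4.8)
The plan is to start from the variation-of-constants representation of $z$ and to use the explicit formula \eqref{explicit} to transfer all the work to the operator $G(t,s)$, for which contractivity \eqref{est_uni} and the continuity properties of Lemma \ref{cont} and Proposition \ref{weak-con} are already available.

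\textbf{Representation and the estimate.} First I would recall that the mild solution of the linear inhomogeneous problem with operator $\mathcal B(t)$ is
\[
z(t,\cdot)=G_{\mathcal B}(t,s)f+\int_s^t G_{\mathcal B}(t,r)g(r,\cdot)\,dr,\qquad t\ge s,
\]
existence and uniqueness of a bounded solution on every bounded interval being obtained exactly as in Theorem \ref{exi_Cb} (here the role of the nonlinearity is played by the prescribed forcing term $g$). By \eqref{explicit}, $G_{\mathcal B}(t,r)h=\exp\big(\int_r^t\partial_y\psi(\sigma,0)\,d\sigma\big)G(t,r)h$, and Hypothesis \ref{reg}(ii) gives $\int_r^t\partial_y\psi(\sigma,0)\,d\sigma\le-\omega_0(t-r)$; combined with \eqref{est_uni} this yields $\|G_{\mathcal B}(t,r)h\|_\infty\le e^{-\omega_0(t-r)}\|h\|_\infty$. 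Inserting $\|g(r,\cdot)\|_\infty\le e^{-\omega(r-s)}\|g\|_{C_\omega([s,+\infty)\times\Rd)}$ and evaluating the elementary integral $\int_s^t e^{-\omega_0(t-r)}e^{-\omega(r-s)}\,dr=\big(e^{-\omega(t-s)}-e^{-\omega_0(t-s)}\big)/(\omega_0-\omega)$, where $\omega<\omega_0$ is used, one gets
\[
\|z(t,\cdot)\|_\infty\le e^{-\omega_0(t-s)}\|f\|_\infty+\frac{e^{-\omega(t-s)}-e^{-\omega_0(t-s)}}{\omega_0-\omega}\,\|g\|_{C_\omega([s,+\infty)\times\Rd)}.
\]
Multiplying by $e^{\omega(t-s)}$, using $e^{-(\omega_0-\omega)(t-s)}\le1$ and taking the supremum over $t\ge s$ produces exactly \eqref{omega}, and in particular shows that $z$ is bounded with the correct exponential weight.

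\textbf{Continuity.} It remains to check that $z$ is continuous on $[s,+\infty)\times\Rd$. The first term $G_{\mathcal B}(\cdot,s)f$ is the product of the scalar continuous function $t\mapsto\exp\big(\int_s^t\partial_y\psi(\sigma,0)\,d\sigma\big)$ and of $(t,x)\mapsto(G(t,s)f)(x)$, which is continuous and bounded on $[s,+\infty)\times\Rd$, so it lies in $C_b([s,+\infty)\times\Rd)$. For the convolution term I expect the only real difficulty: after using \eqref{explicit}, the scalar weight $\exp\big(\int_r^t\partial_y\psi(\sigma,0)\,d\sigma\big)$ depends on \emph{both} $r$ and $t$, so Proposition \ref{weak-con} does not apply directly. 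The plan is to factor $\exp\big(\int_r^t\cdot\big)=\exp\big(\int_s^t\cdot\big)\exp\big(-\int_s^r\cdot\big)$ and to absorb the $r$-dependent factor into $g$: setting $h(r,x):=\exp\big(-\int_s^r\partial_y\psi(\sigma,0)\,d\sigma\big)g(r,x)$ one has
\[
\int_s^t G_{\mathcal B}(t,r)g(r,\cdot)\,dr=\exp\!\Big(\int_s^t\partial_y\psi(\sigma,0)\,d\sigma\Big)\int_s^t\big(G(t,r)h(r,\cdot)\big)\,dr .
\]
On every slab $[s,b]\times\Rd$ the function $h$ is continuous and bounded (the scalar prefactor is continuous, hence bounded, on $[s,b]$, and $g\in C_b([s,b]\times\Rd)$ since $\omega\ge0$), so Proposition \ref{weak-con} gives that $(t,x)\mapsto\int_s^t(G(t,r)h(r,\cdot))(x)\,dr$ belongs to $C_b([s,b]\times\Rd)$; multiplying back by the continuous scalar factor and letting $b\to+\infty$ yields continuity of the convolution term on $[s,+\infty)\times\Rd$. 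Together with the estimate above this shows $z\in C_\omega([s,+\infty)\times\Rd)$.

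The main obstacle is thus the $r,t$-coupling of the exponential weight in the convolution term; once it is disentangled by the factorization trick, the proposition reduces to the elementary integral computation of the first step and to Proposition \ref{weak-con}.
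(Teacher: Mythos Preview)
Your proof is correct and follows essentially the same route as the paper: variation of constants for $z$, the bound $\|G_{\mathcal B}(t,r)h\|_\infty\le e^{-\omega_0(t-r)}\|h\|_\infty$ coming from \eqref{explicit} and Hypothesis~\ref{reg}(ii), and the elementary integral $\int_s^t e^{-\omega_0(t-r)}e^{-\omega(r-s)}\,dr\le (\omega_0-\omega)^{-1}e^{-\omega(t-s)}$. The only cosmetic difference is that the paper first substitutes $v(t,x)=e^{\omega(t-s)}z(t,x)$ and estimates $\|v(t,\cdot)\|_\infty$ directly, whereas you estimate $\|z(t,\cdot)\|_\infty$ and then multiply by $e^{\omega(t-s)}$; the computations are identical. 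Your explicit continuity verification via the factorization $\exp\big(\int_r^t\big)=\exp\big(\int_s^t\big)\exp\big(-\int_s^r\big)$ and Proposition~\ref{weak-con} is a welcome addition that the paper leaves implicit.
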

\begin{proof}
The function $v(t,x):=e^{\omega(t-s)}z(t,x)$ is the unique mild solution of the problem
\begin{equation*}
\left\{
\begin{array}{ll}
D_t v(t,x)=(\mathcal{B}(t)+\omega)v(t,x)+e^{\omega(t-s)}g(t,x),\quad\quad & (t,x)\in (s,+\infty)\times \Rd,\\[1mm]
v(s,x)= f(x),\quad\quad & x \in \Rd,
\end{array}
\right.
\end{equation*}
so that it  is given by the variation of constants formula
$$v(t,x)= e^{\omega(t-s)}(G_{\mathcal B}(t,s)f)(x)+\int_s^t e^{\omega(t-r)}(G_{\mathcal B}(t,r)g_\omega(r, \cdot))(x)\,dr\quad\;t>s,\, x \in \Rd,$$
where   $g_\omega(r,x)=e^{\omega(r-s)}g(r,x)$ for any $s< r < t$ and $x \in \Rd$.
Since
$$e^{\omega(t-s)}\|G_{\mathcal B}(t,s)\|_{\mathcal{L}(C_b(\Rd))}\le e^{(\omega+h_0)(t-s)}\le 1, \quad s\in I, \;t\geq s, $$
then
$$\begin{array}{ll}
\|v(t, \cdot)\|_{\infty}& \ds \le \|f\|_{\infty}+ \int_s^t e^{(\omega+h_0)(t-r)}\|g_\omega(r, \cdot)\|_{\infty}\, dr
\\
\\
& \ds \le \|f\|_{\infty}+ \|g\|_{C_{\omega}([s,+\infty)\times \Rd)}\int_s^t e^{(\omega-\omega_0)(t-r)}\, dr
\\
\\
& \ds \le \|f\|_{\infty}+\frac{1}{\omega_0-\omega} \|g\|_{C_{\omega}([s,+\infty)\times \Rd)},
\end{array}$$
for any $t \in [s,+\infty)$. Taking the supremum with respect to $t \in [s,+\infty)$, \eqref{omega} follows.
\end{proof}

Proposition \ref{prop_lin} is used to prove a nonautonomous version of the principle of linearized stability, in the spirit of
\cite{Hen81Geo}.

\medskip

\begin{thm}\label{asystable}
Let Hypotheses \ref{base}, \ref{hyp_c_b}, \ref{reg} hold. Fix $s\in I$ and assume in addition that
the function $\partial_\xi\psi(t, \cdot)$ be continuous in a neighborhood $\mathcal{U}_0$ of $0$ uniformly with respect to $t \geq s$. Then, for any $\omega \in [0,\omega_0)$ there exists $ r_\omega>0$ such that if $f\in C_b(\Rd)$ and $\|f\|_{\infty}\le r_\omega$ then $\tau(f)=+\infty$ and the unique mild solution $u_f$ of   problem \eqref{p_nonlin} satisfies
\begin{equation}\label{aim1}
\|u_f(t,\cdot)\|_{\infty}\le 2e^{-\omega(t-s)}\|f\|_{\infty},\qquad\; t \in [s, +\infty).
\end{equation}
In particular, the trivial solution is asymptotically stable in $C_b(\Rd)$.
\end{thm}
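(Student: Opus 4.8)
The plan is to run a standard linearized-stability fixed point argument in the exponentially weighted space $C_\omega([s,+\infty)\times\Rd)$, using Proposition \ref{prop_lin} as the linear estimate and the smallness of the nonlinearity $\Phi$ near $0$. First I would fix $\omega\in[0,\omega_0)$ and, using the extra hypothesis that $\partial_\xi\psi(t,\cdot)$ is continuous near $0$ uniformly in $t\ge s$ together with $\Phi(t,0)=0$ and $\partial_\xi\Phi(t,0)=0$ (from \eqref{fit}), choose $\rho>0$ so small that $|\Phi(t,\xi)-\Phi(t,\eta)|\le\eps_0|\xi-\eta|$ for all $t\ge s$ and $|\xi|,|\eta|\le\rho$, where $\eps_0(\omega_0-\omega)<1/2$, say $\eps_0:=(\omega_0-\omega)/4$. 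Then set $r_\omega:=\rho/2$ and, for $\|f\|_\infty\le r_\omega$, look for a fixed point of
\begin{equation*}
(\Gamma z)(t,\cdot)=G_{\mathcal B}(t,s)f+\int_s^t G_{\mathcal B}(t,r)\Phi(r,z(r,\cdot))\,dr
\end{equation*}
in the closed ball $\mathcal{B}_\rho:=\{z\in C_\omega([s,+\infty)\times\Rd):\|z\|_{C_\omega}\le\rho\}$. Note a mild solution $u$ of \eqref{p_nonlin} on $[s,\tau]$ that stays bounded by $\rho$ coincides with a fixed point of $\Gamma$, because $\mathcal B(t)=\mathcal A(t)+\partial_y\psi(t,0)$ and $\psi(t,\xi)=\partial_y\psi(t,0)\xi+\Phi(t,\xi)$, and the variation-of-constants formula for $G_{\mathcal B}$ in terms of $G$ is \eqref{explicit}.

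Next I would check $\Gamma$ maps $\mathcal{B}_\rho$ into itself and is a contraction there. Since $\Phi(r,0)=0$, for $z\in\mathcal{B}_\rho$ we have $\|\Phi(r,z(r,\cdot))\|_\infty\le\eps_0\|z(r,\cdot)\|_\infty$, so $g:=\Phi(\cdot,z(\cdot,\cdot))$ lies in $C_\omega([s,+\infty)\times\Rd)$ with $\|g\|_{C_\omega}\le\eps_0\|z\|_{C_\omega}$; applying \eqref{omega} from Proposition \ref{prop_lin} gives
\begin{equation*}
\|\Gamma z\|_{C_\omega}\le\|f\|_\infty+\frac{\eps_0}{\omega_0-\omega}\|z\|_{C_\omega}\le\frac{\rho}{2}+\frac{\rho}{4}<\rho,
\end{equation*}
and similarly $\|\Gamma z_1-\Gamma z_2\|_{C_\omega}\le\frac{\eps_0}{\omega_0-\omega}\|z_1-z_2\|_{C_\omega}\le\frac14\|z_1-z_2\|_{C_\omega}$, using that $\Phi(r,z_1(r,\cdot))-\Phi(r,z_2(r,\cdot))$ is bounded by $\eps_0|z_1(r,\cdot)-z_2(r,\cdot)|$ pointwise. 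By the contraction mapping theorem there is a unique fixed point $u_f\in\mathcal{B}_\rho$, which satisfies $\|u_f(t,\cdot)\|_\infty\le\rho e^{-\omega(t-s)}$, and in particular $\|u_f(t,\cdot)\|_\infty\le\rho<+\infty$ on every bounded interval.

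Then I would upgrade the weighted bound to \eqref{aim1} and deduce global existence. Since the fixed point $u_f$ is bounded on every $[s,\tau]$ by Lemma \ref{unboun} it cannot blow up, hence $\tau(f)=+\infty$ and $u_f$ is the maximally defined mild solution of \eqref{p_nonlin}; by Theorem \ref{classical} it is classical. To get the constant $2$ rather than $\rho/\|f\|_\infty$, re-run the argument noting that $u_f$ solves $u_f(t,\cdot)=G_{\mathcal B}(t,s)f+\int_s^t G_{\mathcal B}(t,r)\Phi(r,u_f(r,\cdot))\,dr$ with the bound $\|\Phi(r,u_f(r,\cdot))\|_\infty\le\eps_0\|u_f(r,\cdot)\|_\infty$ now valid for all $r\ge s$ (since $\|u_f(r,\cdot)\|_\infty\le\rho$), so that $w(t):=e^{\omega(t-s)}\|u_f(t,\cdot)\|_\infty$ satisfies
\begin{equation*}
w(t)\le\|f\|_\infty+\frac{\eps_0}{\omega_0-\omega}\sup_{s\le r\le t}w(r)=\|f\|_\infty+\tfrac14\sup_{s\le r\le t}w(r),
\end{equation*}
whence $\sup_{r\ge s}w(r)\le\frac43\|f\|_\infty\le2\|f\|_\infty$, which is \eqref{aim1}. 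Asymptotic stability in $C_b(\Rd)$ is then immediate: stability follows by choosing $\delta=\min\{r_\omega,\eps/2\}$, and since $\omega>0$ can be taken strictly positive whenever $\omega_0>0$, \eqref{aim1} forces $\|u_f(t,\cdot)\|_\infty\to0$ as $t\to+\infty$. The main obstacle is the very first step — getting the uniform Lipschitz estimate $|\Phi(t,\xi)-\Phi(t,\eta)|\le\eps_0|\xi-\eta|$ for small $\xi,\eta$ \emph{uniformly in $t\ge s$}; this is exactly where the added hypothesis on uniform continuity of $\partial_\xi\psi(t,\cdot)$ near $0$ is needed, via $\Phi(t,\xi)-\Phi(t,\eta)=\int_0^1(\partial_\xi\psi(t,\eta+\sigma(\xi-\eta))-\partial_\xi\psi(t,0))\,d\sigma\,(\xi-\eta)$.
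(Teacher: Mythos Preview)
Your argument is correct and follows essentially the same route as the paper: a contraction mapping argument for the operator $\Gamma$ in the weighted space $C_\omega([s,+\infty)\times\Rd)$, using Proposition \ref{prop_lin} as the linear input and the uniform smallness of the Lipschitz constant of $\Phi$ near zero. The only cosmetic differences are your choice $\eps_0=(\omega_0-\omega)/4$ (the paper takes $(\omega_0-\omega)/2$, yielding a $\tfrac12$-contraction) and your way of extracting the constant $2$ in \eqref{aim1}: you bound $w(t)=e^{\omega(t-s)}\|u_f(t,\cdot)\|_\infty$ directly, whereas the paper writes $\|u_f\|_{C_\omega}=\|\Gamma u_f\|_{C_\omega}\le\|\Gamma u_f-\Gamma(0)\|_{C_\omega}+\|\Gamma(0)\|_{C_\omega}\le\tfrac12\|u_f\|_{C_\omega}+\|f\|_\infty$ and solves for $\|u_f\|_{C_\omega}$; both give the same conclusion.
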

\begin{proof}
First of all we claim that the function
$$K_t(\rho)= \sup\left\{\frac{|\Phi(t,\xi)-\Phi(t,\eta)|}{|\xi-\eta|}:\quad\, \xi,\eta \in [-\rho,\rho] \right\},$$
where $\Phi$ is defined in \eqref{fit}, goes to $0$ as $\rho \to 0^+$ uniformly with respect to $t \in [s,+\infty)$.
Indeed,   $\Phi(t, \cdot)$ is continuously differentiable in a neighborhood of $0$ and
\begin{equation}\label{es}
K_t(\rho) \le \sup_{r \in (s,+\infty)}\sup_{\sigma\in [-\rho,\rho] }\Big(\partial_\xi\psi(r, \sigma )-\partial_\xi\psi(r, 0)\Big),
\end{equation}
for any $t \in [s,+\infty)$, $\rho>0$. The right hand side in \eqref{es} goes to $0$ as $\rho \to 0^+$, and the claim follows.

Now we show that if $\|f\|_{\infty}$ is small enough, the   solution $u_f$ of   \eqref{p_nonlin} is also the unique fixed point of the operator $\Gamma$ defined on
$$Y_\rho=\left\{v\in C_b([s,+\infty)\times \Rd):\;\,\sup_{t\ge s} e^{\omega(t-s)}\|v(t,\cdot)\|_{\infty}\leq \rho\right\},$$
by setting
\begin{equation*}
(\Gamma v)(t)= G_{\mathcal B}(t,s)f+\int_s^t G_{\mathcal B}(t,r)\Phi(r, v(r))\, dr,\qquad\;\, t \ge s,\, v\in Y_\rho.
\end{equation*}
(See the notation after Hypothesis  \ref{reg}).  Lemma \ref{weak-con} and formula \eqref{explicit} imply  that $\Gamma v \in C_{\omega}([s,+\infty)\times \Rd)$. Moreover, if $v \in Y_\rho$, then
\begin{align}\label{fi}
\|\Phi(t, v(t, \cdot))\|_{\infty}&=\|\Phi(t, v(t, \cdot))-\Phi(t,0)\|_{\infty}\nnm\\
&\le K_t(\rho)\|v(t,\cdot)\|_{\infty}\nnm\\
& \le K_t(\rho) e^{-\omega(t-s)}\rho,\quad\;\, t \in (s,+\infty),
\end{align}
so that the function $(t,x)\mapsto g(t,x):=\Phi(t, v(t,x))$ belongs to $C_{\omega}([s,+\infty)\times \Rd)$, and  $\|g\|_{C_\omega([s,+\infty)\times \Rd)}\le \rho K_t(\rho)$. Applying   Proposition \ref{prop_lin} we obtain that $\Gamma v \in C_{\omega}([s,+\infty)\times \Rd)$ and
\begin{equation}\label{da_dim}
\|\Gamma v\|_{C_{\omega}([s,+\infty)\times \Rd)}\le  \|f\|_{\infty}+ \frac{1}{\omega_0-\omega}\|g\|_{C_{\omega}([s,+\infty)\times \Rd)} .
\end{equation}
Choosing $\rho>0$ small enough such that for any $t \in [s,+\infty)$, $K_t(\rho)\le (\omega_0-\omega)/2 $ and $\|f\|_{\infty}\le r_{\omega}:=   \rho/2$, we obtain $\Gamma v \in Y_\rho$.
Moreover, for any $v_1,v_2 \in Y_\rho$ we have
$$(\Gamma v_1)(t)-(\Gamma v_2)(t)=\int_s^t G_{\mathcal B}(t,r)(\Phi(r,v_1(r))-\Phi(r, v_2(r)))\, dr,$$
hence, by \eqref{omega},
$$\|\Gamma v_1-\Gamma v_2\|_{C_{\omega}((s,+\infty)\times \Rd)}\le \frac{1}{\omega_0-\omega} \|\Phi(\cdot,v_1(\cdot))-\Phi(\cdot, v_2(\cdot))\|_{C_{\omega}([s,+\infty)\times \Rd)}.$$
On the other hand,  if $v \in Y_\rho$ then $\sup_{t \in [s,+\infty)}\|v(t,\cdot)\|_{\infty}\le \rho$, so that
\begin{align*}
\|\Phi(t,v_1(t,\cdot))-\Phi(t, v_2(t,\cdot))\|_{\infty}\le K_t(\rho)\|v_1(t,\cdot)-v_2(t,\cdot)\|_{\infty}
\end{align*}
and
\begin{align*}
\|\Gamma v_1-\Gamma v_2\|_{C_{\omega}([s,+\infty)\times \Rd)}&\le  \frac{1}{\omega_0-\omega} \sup_{t \in [s,+\infty)}e^{\omega(t-s)}K_t(\rho)\|v_1(t,\cdot)-v_2(t,\cdot)\|_{\infty} \\
& \le \frac{1}{2}\| v_1- v_2\|_{C_{\omega}([s,+\infty)\times \Rd)}.
\end{align*}
Hence $\Gamma$ is a $\frac{1}{2}$- contraction on $Y_\rho$, and it admits a unique fixed point $\overline{v}\in Y_\rho$ that is a mild solution of \eqref{p_nonlin},  and therefore it coincides with $u_f$. In particular $u_f \in Y_{\rho}$, and   using \eqref{fi} and \eqref{da_dim} we get
\begin{align*}
\|u_f\|_{C_{\omega}([s,+\infty)\times \Rd)}& = \|\Gamma u_f\|_{C_{\omega}([s,+\infty)\times \Rd)}
\\
&\le \|\Gamma u_f-\Gamma(0)\|_{C_{\omega}([s,+\infty)\times \Rd)} + \|\Gamma(0)\|_{C_{\omega}([s,+\infty)\times \Rd)} .
\end{align*}
Since $\Gamma(0)(t,x)= (G_{\mathcal B}(t,s)f)(x)$, and $\|G_{\mathcal B}(\cdot ,s)f\|_{C_{\omega}([s,+\infty)\times \Rd)}\le   \|f\|_{\infty}$,
  \eqref{aim1} follows.
\end{proof}

\begin{rmk}
Looking at the proof of Theorem \ref{asystable}, we see that if $\partial_\xi\psi(t, \cdot)$ is continuous in a neighborhood $\mathcal{U}_0$ of $0$ uniformly with respect to $t \in I$, then $r_\omega$ does not depend on $s$.
\end{rmk}

If we strenghten condition \eqref{alge}, replacing it by
\begin{equation}
\label{psi0}
\xi\,\psi(t,\xi)\le \psi_0\, \xi^2,\qquad\;\, t \in I,\,\xi \in \R,
\end{equation}
we obtain better estimates, that yield a global stability result if  $\psi_0\leq 0$.

\begin{thm}
\label{globalC0}
Let Hypotheses \ref{base} and \ref{hyp_c_b} hold. If there exists $\psi_0\in \R$ such that \eqref{psi0} holds, then for every $s\in I$ and $f\in C_b(\R^d)$ we have
$I(f) = [s, +\infty)$, and
\begin{equation}
\label{expC0}
|u_f(t, x)|\leq e^{\psi_0(t-s)}\|f\|_{\infty}, \quad t\geq s, \;x\in \R^d.
\end{equation}
\end{thm}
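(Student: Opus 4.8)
The plan is to reduce global existence to Theorem~\ref{Exlarge} and then to prove \eqref{expC0} by a maximum principle argument modelled on the proof of that theorem. Since \eqref{psi0} implies \eqref{alge} (choose $k=\psi_0$ when $\psi_0>0$ and $k=1$ when $\psi_0\le0$), Theorem~\ref{Exlarge} gives $I(f)=[s,+\infty)$ for every $f\in C_b(\Rd)$, and by Theorem~\ref{classical} the solution $u_f$ is classical; in particular $u_f\in C^{1,2}((s,+\infty)\times\Rd)$ and $u_f$ is bounded on $[s,\tau]\times\Rd$ for every $\tau>s$, which makes the pointwise computations below legitimate.

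To get the upper estimate, I would fix $b>0$ and $\lambda>\psi_0$ and define $v_n$ as in \eqref{vn},
\[
v_n(t,x):=e^{-\lambda(t-s)}u_f(t,x)-\frac{\varphi(x)}{n},\qquad t\in[s,s+b],\ x\in\Rd.
\]
Since $u_f$ is bounded on $[s,s+b]\times\Rd$ and $\varphi(x)\to+\infty$ as $|x|\to+\infty$, $v_n$ attains its maximum over $[s,s+b]\times\Rd$ at some point $(t_n,x_n)$, and a computation with \eqref{p_nonlin} entirely analogous to \eqref{multa} yields an identity for $D_tv_n-\A(t)v_n$. If $v_n(t_n,x_n)\le0$ for every $n$, then $u_f\le0$ on $[s,s+b]\times\Rd$ and there is nothing to prove. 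Otherwise, for some $n$ we have $v_n(t_n,x_n)>0$, hence $e^{-\lambda(t_n-s)}u_f(t_n,x_n)=v_n(t_n,x_n)+\varphi(x_n)/n>0$: when $t_n=s$ one estimates $v_n(t_n,x_n)=f(x_n)-\varphi(x_n)/n\le\|f\|_\infty$, while when $t_n>s$ one uses that $D_tv_n(t_n,x_n)-\A(t_n)v_n(t_n,x_n)\ge0$ (because $x_n$ is a spatial maximum and $Q(t_n,x_n)$ is positive definite), that $(\A(t_n)\varphi)(x_n)\le a$ by Hypothesis~\ref{base}(iii), and that $\psi(t_n,\xi)\le\psi_0\xi$ for $\xi>0$ by \eqref{psi0}, to get $(\lambda-\psi_0)\big(v_n(t_n,x_n)+\varphi(x_n)/n\big)\le a/n$. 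In all cases $v_n(t_n,x_n)\le\max\{\|f\|_\infty,\,a/(n(\lambda-\psi_0))\}$, so that $e^{-\lambda(t-s)}u_f(t,x)\le\varphi(x)/n+\max\{\|f\|_\infty,\,a/(n(\lambda-\psi_0))\}$ on $[s,s+b]\times\Rd$. Letting $n\to+\infty$ with $(t,x)$ fixed gives $u_f(t,x)\le e^{\lambda(t-s)}\|f\|_\infty$; then letting $\lambda\downarrow\psi_0$, and using the arbitrariness of $b$, we obtain $u_f(t,x)\le e^{\psi_0(t-s)}\|f\|_\infty$ for all $t\ge s$, $x\in\Rd$.

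For the lower estimate it suffices to run the same argument with $v_n$ replaced by $e^{-\lambda(t-s)}u_f+\varphi/n$ and the maximum replaced by the minimum, now using that \eqref{psi0} gives $\psi(t,\xi)\ge\psi_0\xi$ for $\xi<0$; equivalently, one may apply the upper estimate to $-u_f$, which is the classical solution of the problem obtained from \eqref{p_nonlin} by replacing $f$ with $-f$ and $\psi$ with $\widetilde\psi(t,\xi):=-\psi(t,-\xi)$, a nonlinearity that still satisfies Hypothesis~\ref{hyp_c_b} and \eqref{psi0}. Combining the two one-sided estimates yields \eqref{expC0}.

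I do not expect a serious obstacle: the proof is essentially a rerun of the proof of Theorem~\ref{Exlarge}, the only point deserving attention being the sign of $\A(t_n)v_n(t_n,x_n)$ at a spatial extremum, which follows from the positive definiteness of $Q$ (Hypothesis~\ref{base}(ii)). The simplification with respect to Theorem~\ref{Exlarge} is that the purely quadratic bound $\psi_0\xi^2$ in \eqref{psi0} makes the final case analysis cleaner and produces the sharp exponential rate $e^{\psi_0(t-s)}$.
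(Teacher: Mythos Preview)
Your proof is correct and follows essentially the same maximum-principle route as the paper: reduce global existence to Theorem~\ref{Exlarge}, introduce $v_n=e^{-\lambda(t-s)}u_f-\varphi/n$ with $\lambda>\psi_0$, locate the maximum, and combine \eqref{multa} with Hypothesis~\ref{base}(iii) and \eqref{psi0}. The only differences are cosmetic: you use the linear consequence $\psi(t,\xi)\le\psi_0\xi$ for $\xi>0$ directly (the paper first multiplies \eqref{multa} by $v_n+\varphi/n$ and uses the quadratic form of \eqref{psi0}, arriving at the same inequality after division), and you work on $[s,s+b]$ with $b$ finite and pass to the limit at the end, whereas the paper writes ``$b=+\infty$'' from the start---your version is arguably the more careful one, since global-in-time boundedness of $u_f$ is not yet available at that point.
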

\begin{proof}
For every $s\in I$ and $\tau >s$,   \eqref{alge} is satisfied, and therefore, by Theorem \ref{Exlarge}, $I(f) = +\infty$ for every $f\in C_b(\R^d)$.
To obtain estimate \eqref{expC0} we modify the proof of Theorem \ref{Exlarge}. We define $v_n$ by \eqref{vn}
taking now $\lambda >\psi_0$ and $b=+\infty$. Since $u_f$ is bounded and $\lim_{|x|\to \infty}\varphi(x) = +\infty$, then $v_n$ has a maximum point $(t_n, x_n)$. If
$v_n(t_n, x_n) \leq 0$ for every $n$, then $ u_f(t,x) \leq 0$ for every $(t,x)\in  [s,+\infty)\times \R^d$. Assume that  $v_n(t_n, x_n) > 0$ for some $n$. If $t_n=s$, then $v_n(t_n, x_n) \leq \sup f$. If $t_n>s$, $D_t v_n(t_n,x_n) -(\A(t_n) v_n)(t_n, x_n)\geq 0$ so that, multiplying both sides of \eqref{multa} at $(t_n,x_n)$ by
 $v_n(t_n,x_n)+ \varphi(x_n)/n>0$ and using Hypohesis \ref{base}(iii)  and \eqref{psi0},
 we get
$$\begin{array}{lll}
0 & \leq & \ds  -\lambda\bigg(v_n (t_n,x_n)+ \frac{\varphi(x_n)}{n}\bigg)^2 + \frac{a-c \varphi(x_n)}{n}\bigg(v_n (t_n,x_n)+ \frac{\varphi(x_n)}{n}\bigg)
\\
\\
& & \ds +   \psi_0  \bigg(v_n (t_n,x_n)+ \frac{\varphi(x_n)}{n}\bigg)^2
\end{array}$$
which implies
$$(\lambda -\psi_0) \bigg(v_n (t_n,x_n)+ \frac{\varphi(x_n)}{n}\bigg)^2 -\frac{a}{n} \bigg(v_n (t_n,x_n)+ \frac{\varphi(x_n)}{n}\bigg) \leq 0.$$
Therefore, $v_n (t_n,x_n)+ \varphi(x_n)/n \leq a(\lambda-\psi_0)^{-1}$, that yields
$$v_n(t,x) \leq \max\bigg\{ 0, \sup f, \;\frac{a}{n(\lambda-\psi_0)}\bigg\}, \quad t\geq s, \;x\in \R^d. $$
Coming back to $u_f$ we get
$$e^{-\lambda(t-s)}u_f(t,x)-\frac{\varphi(x)}{n} \leq  \max\bigg\{ 0, \;\sup f, \;\frac{a}{n(\lambda-\psi_0)}\bigg\}, \quad t\geq s, \;x\in \R^d. $$
Letting $n\to \infty$, we obtain
$$e^{-\lambda(t-s)}u_f(t,x)  \leq  \max\{ 0, \;\sup f \}, \quad t\geq s, \;x\in \R^d,  $$
and letting $\lambda \to \psi_0$,
$$e^{-\psi_0(t-s)}u_f(t,x)  \leq  \max\{ 0, \;\sup f\}, \quad t\geq s, \;x\in \R^d. $$
Arguing similarly, with $v_n$ defined now  by $  e^{-\lambda(t-s)}u_f(t,x) +  \varphi(x)/n$, we obtain
$$e^{-\psi_0(t-s)}u_f(t,x)  \geq  \min\{ 0, \; \inf f \}, \quad t\geq s, \;x\in \R^d, $$
and \eqref{expC0} follows.
\end{proof}

Condition \eqref{psi0} allows to obtain   global estimates also in the context of our $L^p$ spaces.
We would like to follow the standard method to get $L^p$ estimates of classical solutions for a fixed measure, together with the heuristic formula
\begin{equation}
\label{heuristic}
 D_t \int_{\R^d} g(x)  \mu_t(dx) = -\int_{\R^d}(\A(t)g )(x) \mu_t(dx),
\end{equation}
that would give (with $u=u_f$)
$$\begin{array}{l}
\ds D_t \int_{\R^d} |u(t,x)|^p \mu_t(dx) =   \int_{\R^d} (D_t |u(t,x)|^p- (\A(t)(|u(t,\cdot )|^p))(x))\mu_t(dx)
\\
\\
= \ds \int_{\R^d}p |u(t,x)|^{p-2}(u(t,x)\psi(t, u(t,x)) -  (p-1) \langle Q(t,x)\nabla_xu(t,x), \nabla_xu(t,x)\rangle )\mu_t(dx)
\\
\\
 \leq  \psi_0 p \ds \int_{\R^d} |u(t,x)|^p \mu_t(dx)
\end{array}$$
and the statement would follow. However, \eqref{heuristic} was proved  only for $C^2$ functions that are constant outside a compact set (\cite[Lemma 3.1]{AngLorLun}), and there is no reason for $u_f(t, \cdot)$ be constant outside a compact set. So,  we   multiply by a sequence of cutoff functions that are equal to $1$ in $B_n$ and vanish outside $B_{2n}$. In this way we introduce extra terms; the further assumptions \eqref{hypo} will be used to get rid of such extra terms as $n\to \infty$. We state below the version of \eqref{heuristic} that we need here.

\begin{lemm}\label{derivative}
Under  Hypothesis \ref{base}, fix $[a,b]\subset I$. For every
$g\in C^{1,2}_b([a,b]\times\Rd)$ such that   $g(t,\cdot)$ is constant outside a compact set  for every $t\in [a,b]$,   the function $t \mapsto \int_{\Rd}g(t,x)\mu_t(dx)$ is continuously differentiable in $[a,b]$ and
\begin{eqnarray*}
\;\;\;\;\;\;\;\;\;\;\;\frac{d}{dt}\int_{\Rd}g(t,x) \mu_t(dx)=\int_{\Rd}D_tg(t,x)\mu_t(dx) -\int_{\Rd} (\mathcal{A}(t)g(t, \cdot))(x)\mu_t(dx),
\end{eqnarray*}
for every  $t \in [a,b]$.
\end{lemm}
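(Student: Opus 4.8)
The key tool is the relation, valid for any evolution system of measures, that the map $t \mapsto \int_{\Rd} G(t,s)\phi\,d\mu_t = \int_{\Rd}\phi\,d\mu_s$ is constant, hence differentiable with zero derivative. My plan is to use this together with the identity $D_t(G(t,r)\phi) = \A(t)G(t,r)\phi$ to compute the derivative of $t\mapsto \int_{\Rd} g(t,x)\,\mu_t(dx)$ by first freezing the time variable in $g$.

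First I would fix $t_0\in[a,b]$ and write, for $t$ near $t_0$,
\[
\int_{\Rd}g(t,x)\mu_t(dx) = \int_{\Rd}\big(g(t,x)-g(t_0,x)\big)\mu_t(dx) + \int_{\Rd}g(t_0,x)\mu_t(dx).
\]
For the second term, since $g(t_0,\cdot)$ is $C^2_b$ and constant outside a compact set, I can apply \cite[Lemma 3.1]{AngLorLun} (the ``heuristic formula'' \eqref{heuristic} referred to in the text, which in the excerpt is only asserted for such functions) directly: it gives that $t\mapsto \int_{\Rd}g(t_0,x)\mu_t(dx)$ is differentiable with derivative $-\int_{\Rd}(\A(t)g(t_0,\cdot))(x)\mu_t(dx)$ at $t=t_0$, and by Lemma \ref{continuita'} applied to the continuous bounded function $\A(t_0)g(t_0,\cdot)$, together with continuity in $t$ of $\A(t)g(t_0,\cdot)$ on compact sets (the coefficients are continuous), this derivative is continuous in $t$. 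For the first term, I would bound $|g(t,x)-g(t_0,x)|\le \|D_tg\|_\infty|t-t_0|$ and, using that $D_tg(t,x)\to D_tg(t_0,x)$ uniformly on the compact set where $g(t,\cdot)$ is non-constant (boundedness and continuity of $D_tg$), show that
\[
\frac{1}{t-t_0}\int_{\Rd}\big(g(t,x)-g(t_0,x)\big)\mu_t(dx)\longrightarrow \int_{\Rd}D_tg(t_0,x)\,\mu_{t_0}(dx)
\]
as $t\to t_0$; here the convergence $\int f\,d\mu_t\to\int f\,d\mu_{t_0}$ for bounded continuous $f$ is exactly Lemma \ref{continuita'}, and the uniform-in-$t$ control of the difference quotient lets me pass to the limit inside. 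Adding the two contributions gives the stated formula at $t=t_0$.

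Alternatively, and perhaps more cleanly, I could proceed via the representation $g(t,\cdot) = G(t,s)^{-1}$—no, that is not available; instead I would note that for the special class of $g$ considered, \cite[Lemma 3.1]{AngLorLun} likely already contains (or immediately yields, via freezing) the time-dependent statement, and the real content here is just the product-rule bookkeeping: $\frac{d}{dt}\int g(t,\cdot)\,d\mu_t$ splits into the $\partial_t$ term and the $\mu_t$-variation term. The continuity of $t\mapsto \frac{d}{dt}\int g(t,\cdot)\,d\mu_t$ then follows because both $t\mapsto\int_{\Rd}D_tg(t,x)\mu_t(dx)$ and $t\mapsto\int_{\Rd}(\A(t)g(t,\cdot))(x)\mu_t(dx)$ are continuous: each is of the form $t\mapsto\int_{\Rd}h(t,x)\mu_t(dx)$ with $h$ continuous and bounded on $[a,b]\times\Rd$ and, restricted to the relevant compact set, uniformly continuous, so one combines Lemma \ref{continuita'} (varying the measure) with the uniform continuity of $h$ (varying the integrand) exactly as in the proof of Lemma \ref{misu}.

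The main obstacle I anticipate is the justification of differentiating under the integral sign when \emph{both} the integrand and the measure depend on $t$: one cannot invoke a single dominated-convergence argument because $\mu_t$ moves. The way around it is the splitting above, which isolates the measure-variation into a term where the integrand is fixed (handled by \cite[Lemma 3.1]{AngLorLun}) and the integrand-variation into a term where one only needs the weak continuity of $t\mapsto\mu_t$ from Lemma \ref{continuita'} plus the uniform (on the fixed compact support of non-constancy) convergence of the difference quotients $\frac{g(t,\cdot)-g(t_0,\cdot)}{t-t_0}\to D_tg(t_0,\cdot)$. Everything else is routine.
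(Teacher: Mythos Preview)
Your splitting argument is correct and gives a genuine proof, but it is more work than the paper actually does here. The paper's ``proof'' is a one-line citation: Lemma~3.1 of \cite{AngLorLun} already proves the full time-dependent statement (not just the time-frozen heuristic formula~\eqref{heuristic}), under the extra assumption that the $q_{ij}$ depend only on $t$; the authors simply observe that this hypothesis is never used in that proof, so it carries over verbatim. In other words, the paper defers the whole argument, whereas you reduce the time-dependent case to the time-independent one by a product-rule decomposition.

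What your approach buys is a self-contained reduction: once one accepts the time-frozen version $\frac{d}{dt}\int \phi\,d\mu_t=-\int \A(t)\phi\,d\mu_t$ for fixed $\phi\in C^2_b$ constant at infinity, your splitting
\[
\int g(t,\cdot)\,d\mu_t=\int\big(g(t,\cdot)-g(t_0,\cdot)\big)\,d\mu_t+\int g(t_0,\cdot)\,d\mu_t
\]
handles the rest with only Lemma~\ref{continuita'} and uniform convergence of the difference quotients $\tfrac{g(t,\cdot)-g(t_0,\cdot)}{t-t_0}\to D_t g(t_0,\cdot)$. That uniform convergence is where you implicitly use that the compact set outside which $g(t,\cdot)$ is constant is \emph{the same} for all $t$ (true in the intended application, where the cutoff $\theta_n$ has fixed support); you should state this reading explicitly, since the hypothesis as written is slightly ambiguous. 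With that clarification your argument is complete, and your continuity check for the derivative---splitting the variation of $\int h(t,\cdot)\,d\mu_t$ into measure-variation (Lemma~\ref{continuita'}) plus integrand-variation (uniform continuity on the fixed compact set, constant outside)---is exactly right.
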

\begin{proof}
The statement was proved in \cite[Lemma 3.1]{AngLorLun}, in the case of diffusion coefficients $q_{ij}$ depending only on $t$. But the proof relies on general properties of $G(t,s)$ that do not require this restrictive assumption, and can be followed word by word in our general context.
\end{proof}

\begin{thm}\label{martina}
Let  Hypotheses \ref{base}, \ref{smooth}, and \ref{hyp1_F} hold. In addition, we assume that for any  $s\in I$ and $\tau >s$ there exist three positive constants $C_i= C_i(s,\tau)$, $i=0,1,2$ such that
\begin{eqnarray}\label{hypo}
\left\{\begin{array}{lll}
|Q(t,x)x|\le C_0 |x|\varphi(x), \\[1mm]
{\rm{Tr}}(Q(t,x))\le C_1(1+|x|)\varphi(x),\\[1mm]
 \langle b(t,x),x \rangle \le C_2 |x|\varphi(x),
\end{array}\right.
\end{eqnarray}
\noindent
for any $t \in [s, \tau]$, $x\in\Rd$, where   $\varphi$ is the Lyapunov function introduced in Hypothesis \ref{base}(iii).
If there exists  $\psi_0\in \R$ such that \eqref{psi0} holds,
then for every $s\in I$ and for every  $f\in L^p(\Rd,\mu_t)$ we have
\begin{equation}\label{aim2}
\|u_f(t,\cdot)\|_{L^p(\Rd, \mu_t)}\le e^{\psi_0(t-s)}\|f\|_{L^p(\Rd,\mu_s)},\qquad\,\, t\ge s.
\end{equation}
In particular, if $\psi_0<0$ the null solution of \eqref{eq} is exponentially asymptotically stable in $L^p(\Rd,\mu_t)$.
\end{thm}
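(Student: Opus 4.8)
The plan is to make rigorous the formal differential inequality sketched just before Lemma~\ref{derivative}, namely $\frac{d}{dt}\int_{\Rd}|u_f|^p\,d\mu_t\le p\psi_0\int_{\Rd}|u_f|^p\,d\mu_t$, from which \eqref{aim2} follows by integration. It is enough to prove \eqref{aim2} for $f\in C_b(\Rd)$: a general $f\in L^p(\Rd,\mu_s)$ is approximated in $L^p(\Rd,\mu_s)$ by functions $f_k\in C_b(\Rd)$, and one lets $k\to\infty$ in the estimate for $u_{f_k}$ using the continuous dependence estimate \eqref{terremoto} (global existence is no issue, since \eqref{psi0} implies \eqref{alge}, hence $I(f_k)=[s,+\infty)$ by Theorem~\ref{Exlarge}, while Theorem~\ref{loc-exis} gives solutions on every $[s,\tau]$ in the $L^p$ setting). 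So fix $f\in C_b(\Rd)$ and put $u:=u_f$. By Theorem~\ref{classical}, $u$ is classical on $(s,\tau]\times\Rd$ for every $\tau>s$, and under Hypothesis~\ref{smooth} estimate \eqref{C0-C1} gives $\sup_{s<r\le\tau,\,x\in\Rd}\sqrt{r-s}\,|\nabla_x u(r,x)|<\infty$; in particular $u$ and $\nabla_x u$ are bounded on $[s+\varepsilon,\tau]\times\Rd$ for every $\varepsilon\in(0,\tau-s)$.

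Fix $\tau>s$ and $\varepsilon\in(0,\tau-s)$. Choose a \emph{radial} cutoff $\theta\in C^\infty_c(\Rd)$ with $\theta\equiv1$ on $\overline{B}_1$, $\theta\equiv0$ outside $\overline{B}_2$, $0\le\theta\le1$, and set $\theta_n(x):=\theta(x/n)$; since $\xi\mapsto|\xi|^p$ is only $C^1$ when $1<p<2$, also regularize it by $\phi_\delta(\xi):=(\delta^2+\xi^2)^{p/2}$, $\delta\in(0,1)$ (one may take $\delta=0$ if $p\ge2$). Then $g_{n,\delta}(t,x):=\phi_\delta(u(t,x))\theta_n(x)$ lies in $C^{1,2}_b([s+\varepsilon,\tau]\times\Rd)$ and is constant ($=0$) outside $\overline{B}_{2n}$, so Lemma~\ref{derivative} applies on $[s+\varepsilon,\tau]$. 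Using $D_tu=\A(t)u+\psi(t,u)$ and the product rule $\A(t)(vw)=w\,\A(t)v+v\,\A(t)w+2\langle Q(t,\cdot)\nabla v,\nabla w\rangle$, the terms containing $\A(t)u$ cancel, leaving
\begin{align*}
\frac{d}{dt}\int_{\Rd}g_{n,\delta}\,d\mu_t={}&\int_{\Rd}\theta_n\,\phi_\delta'(u)\psi(t,u)\,d\mu_t-\int_{\Rd}\theta_n\,\phi_\delta''(u)\langle Q(t,\cdot)\nabla_x u,\nabla_x u\rangle\,d\mu_t\\
&-\int_{\Rd}\phi_\delta(u)\,\A(t)\theta_n\,d\mu_t-2\int_{\Rd}\phi_\delta'(u)\langle Q(t,\cdot)\nabla_x u,\nabla\theta_n\rangle\,d\mu_t .
\end{align*}
As $p>1$, $\phi_\delta''\ge0$, so the second integral is $\le0$; and since $\phi_\delta'(\xi)\xi=p\,\xi^2(\delta^2+\xi^2)^{p/2-1}$ while $\xi\psi(t,\xi)\le\psi_0\xi^2$ by \eqref{psi0}, a short computation (distinguishing the sign of $\psi_0$) gives $\theta_n\phi_\delta'(u)\psi(t,u)\le p\psi_0\,\theta_n\phi_\delta(u)+p|\psi_0|\delta^p$. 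Writing $R_{n,\delta}(t)$ for the sum of the last two integrals and integrating, we obtain
\begin{align*}
\int_{\Rd}g_{n,\delta}(t,\cdot)\,d\mu_t\le{}&e^{p\psi_0(t-s-\varepsilon)}\int_{\Rd}g_{n,\delta}(s+\varepsilon,\cdot)\,d\mu_{s+\varepsilon}\\
&+\int_{s+\varepsilon}^t e^{p\psi_0(t-r)}\big(p|\psi_0|\delta^p+R_{n,\delta}(r)\big)\,dr ,\qquad s+\varepsilon\le t\le\tau .
\end{align*}

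The main obstacle is the estimate of the cutoff remainder $R_{n,\delta}$: showing $R_{n,\delta}(r)\to0$ as $n\to\infty$ is exactly where the growth conditions \eqref{hypo}, the bound \eqref{nor_fi} on $\int_{\Rd}\varphi\,d\mu_t$, and the global gradient estimate granted by Hypothesis~\ref{smooth} come into play, and where the radial choice of $\theta$ is crucial, since it aligns the derivatives of $\theta_n$ with the quantities in \eqref{hypo}. Indeed, on $\overline{B}_{2n}\setminus B_n$ one has $\nabla\theta_n(x)=n^{-1}\theta'(|x|/n)x/|x|$, so $\langle Q(t,x)\nabla_x u,\nabla\theta_n\rangle=n^{-1}\theta'(|x|/n)|x|^{-1}\langle\nabla_x u,Q(t,x)x\rangle$ is $O(n^{-1}\varphi(x))$ by the first inequality in \eqref{hypo}; and $|\A(t)\theta_n(x)|\le n^{-2}\|D^2\theta\|_\infty\sum_{ij}|q_{ij}(t,x)|+n^{-1}|\theta'(|x|/n)|\,|x|^{-1}|\langle b(t,x),x\rangle|$, which, using $|q_{ij}|\le\textrm{Tr}(Q)$, the last two inequalities in \eqref{hypo}, and $|x|\le 2n$, is also $O(n^{-1}\varphi(x))$ there — note that only the upper bound on $\langle b,x\rangle$ matters, because $\phi_\delta(u)\ge0$ and $R_{n,\delta}$ need only be bounded from above. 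Since $u$, $\nabla_x u$, $\phi_\delta'(u)$ are bounded on $[s+\varepsilon,\tau]\times\Rd$, this yields $R_{n,\delta}(r)\le C_{\varepsilon,\delta}\,n^{-1}\int_{\{|x|\ge n\}}\varphi\,d\mu_r\le C_{\varepsilon,\delta}M_\varphi\,n^{-1}$, uniformly for $r\in[s+\varepsilon,\tau]$.

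It remains to pass to the limits. Letting $n\to\infty$ (monotone convergence for the $\mu_t$- and $\mu_{s+\varepsilon}$-integrals since $g_{n,\delta}\uparrow\phi_\delta(u)$, and $\int_{s+\varepsilon}^t e^{p\psi_0(t-r)}R_{n,\delta}(r)\,dr=O(n^{-1})$), then $\delta\to0$ (now $\phi_\delta(\xi)\downarrow|\xi|^p$ and $u$ is bounded, so dominated convergence applies and the $\delta^p$ term vanishes), one obtains $\|u(t,\cdot)\|_{L^p(\Rd,\mu_t)}^p\le e^{p\psi_0(t-s-\varepsilon)}\|u(s+\varepsilon,\cdot)\|_{L^p(\Rd,\mu_{s+\varepsilon})}^p$ for $s+\varepsilon\le t\le\tau$. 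Finally let $\varepsilon\to0$: by \eqref{defi_mild}, the fact that $G(s+\varepsilon,s)$ is a contraction from $L^p(\Rd,\mu_s)$ into $L^p(\Rd,\mu_{s+\varepsilon})$, and that $\mu_{s+\varepsilon}$ is a probability measure, $\|u(s+\varepsilon,\cdot)\|_{L^p(\Rd,\mu_{s+\varepsilon})}\le\|f\|_{L^p(\Rd,\mu_s)}+\varepsilon\sup_{r\in[s,s+\varepsilon]}\|\psi(r,u(r,\cdot))\|_\infty$, whose right-hand side tends to $\|f\|_{L^p(\Rd,\mu_s)}$, while the left-hand side of the previous estimate does not depend on $\varepsilon$. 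This proves \eqref{aim2} for $f\in C_b(\Rd)$, hence, by density and \eqref{terremoto}, for every $f\in L^p(\Rd,\mu_s)$; the exponential asymptotic stability when $\psi_0<0$ is then immediate from \eqref{aim2} and Definition~\ref{stabili_b}.
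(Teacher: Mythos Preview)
Your proof is correct and follows essentially the same strategy as the paper: apply Lemma~\ref{derivative} to a cut-off and $\varepsilon$-regularized version of $|u|^p$, control the cutoff remainder via the growth hypotheses \eqref{hypo} together with the bound \eqref{nor_fi}, pass to the limits, and conclude for general $f$ by density and \eqref{terremoto}. The only differences are cosmetic: the paper regularizes via $v_{n,\varepsilon}=(\theta_n u^2+\varepsilon)^{1/2}$ and works with $\|v_{n,\varepsilon}\|_{L^p(\mu_t)}^p$ starting from $f\in C^1_b(\Rd)$, whereas you regularize $|\xi|^p$ directly by $(\delta^2+\xi^2)^{p/2}$, multiply by $\theta_n$, and accommodate $f\in C_b(\Rd)$ by restricting to $[s+\varepsilon,\tau]$ before letting $\varepsilon\to0$.
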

\begin{proof}
The proof is in two steps. In the first step we prove that \eqref{aim2} holds if $f\in C^1_b(\Rd)$. In this case
 $u_f$ is a classical solution and its space gradient is bounded, which helps to get rid of some of the extra terms obtained with the introduction of cutoff functions. In the second step, we consider any $f\in L^p(\Rd,\mu_s)$ and we prove the statement by an approximation procedure.

\vspace{2mm}
{\em Step 1.} Let $f \in C^1_b(\Rd)$, and let $u=u_f$ be the mild solution  to \eqref{p_nonlin}. $u$ is a classical solution by Theorem \ref{classical}, moreover \eqref{psi0} is stronger than \eqref{alge}; therefore by Theorem \ref{Exlarge} we have $I(f) = [s, +\infty)$.

To get $L^p$ estimates on $u$ we  introduce a sequence of cut-off functions $\theta_n$,   defined by
\begin{equation*}
\theta_n(x)=\zeta\left(\frac{|x|}{n}\right), \qquad\;\,x\in \R^d, \;\,n\in \N.
\end{equation*}
where $\zeta \in C^\infty(\R)$ is a nonincreasing function such that $\zeta(\xi)=1$ for $\xi\leq 1$, $\zeta(\xi)=0$ for $\xi\geq 2$.

In addition, since the term $|u|^{p-2}$ will appear in our computations,  to avoid unpleasant singularities in the case  $p <2$  we introduce the functions
$$v_{n,\varepsilon}:= (\theta_n u^2+\varepsilon)^{1/2},\quad\;\, n\in \N, \;\varepsilon>0. $$
We shall estimate the functions
\begin{equation}
\label{beta}
\beta_{n, \eps}(t):=  \|v_{n,\varepsilon}(t,  \cdot)\|_{L^p(\R^d, \mu_t)}, \quad t\geq s.
\end{equation}
To this aim we remark that $\beta_{n, \eps}$ is continuous in $[s, +\infty)$, for every $n\in \N$ and $\eps >0$. Indeed, for every $t$, $t_0\geq s$ we have
$$\begin{array}{l}
\ds \bigg| \int_{\R^d}  v_{n,\varepsilon}(t,  x)^p d\mu_t - \int_{\R^d} v_{n,\varepsilon}(t_0,  x)^p d\mu_{t_0}\bigg|
\\
\\
\leq \ds  \int_{\R^d} |v_{n,\varepsilon}(t,  x)^p - v_{n,\varepsilon}(t_0,x)^p| d\mu_t + \bigg|  \int_{\R^d} v_{n,\varepsilon}(t_0,x)^p d\mu_t -  \int_{\R^d} v_{n,\varepsilon}(t_0,x)^p d\mu_{t_0}\bigg|
\end{array}$$
The first term vanishes as $t\to t_0$ by the continuity of $v_{n,\varepsilon}$ and the Dominated Convergence Theorem, the second term vanishes as $t\to t_0$ by Lemma \ref{continuita'}.

By Lemma \ref{derivative},  $\beta_{n, \eps}$ is differentiable in $(s, +\infty)$. Let us  estimate its derivative.
For any $\tau >s  $, the function $v_{n,\varepsilon}$ satisfies
\begin{equation}\label{p_n}
\left\{
\begin{array}{ll}
D_t v =\ds  {\mathcal{A}}(t)v + \frac{u\theta_n}{v_{n, \eps}} \psi(t, u) + g_n ,\quad t \in (s,\tau ],
\\[1mm]
\medskip
v (s,  \cdot )= (\theta_n f^2+\varepsilon)^{1/2},
\end{array}\right.
\end{equation}
where
\begin{align}\label{estg_n}
g_n &: =  - \frac{u^2}{2v_{n,\varepsilon} } \mathcal{A}(\cdot)\theta_n  + \bigg( \frac{u^3\theta_n}{v_{n,\varepsilon}^3}-\frac{2u}{v_{n,\varepsilon}} \bigg) \langle Q(\cdot)\nabla \theta_n,\nabla_x u\rangle\nnm\\
&\quad-\frac{\theta_n}{v_{n,\varepsilon}} \bigg( 1 - \frac{u^2\theta_n}{v_{n,\varepsilon}^2}\bigg) \langle Q(\cdot)\nabla_x u,\nabla_x u\rangle+
\frac{u^4}{4v_{n,\varepsilon}^3}
\langle Q(\cdot) \nabla \theta_n, \nabla \theta_n \rangle
\nnm\\
& \leq  - \frac{u^2}{2v_{n,\varepsilon} } \mathcal{A}(\cdot)\theta_n  + \bigg( \frac{u^3\theta_n}{v_{n,\varepsilon}^3}-\frac{2u}{v_{n,\varepsilon}} \bigg) \langle Q(\cdot)\nabla \theta_n,\nabla_x u\rangle +
\frac{u^4}{4v_{n,\varepsilon}^3} \langle Q(\cdot) \nabla \theta_n, \nabla \theta_n \rangle \nnm\\
& := h_n
\end{align}
Multiplying the differential equation in \eqref{p_n} by $v_{n,\varepsilon}^{p-1}$ and using \eqref{estg_n} we get
\begin{align}\label{raffi}
v_{n,\varepsilon}^{p-1}D_t v_{n,\varepsilon} \leq v_{n,\varepsilon}^{p-1}{\mathcal{A}}(t)v_{n,\varepsilon}+ v_{n,\varepsilon}^{p-1}\frac{u\theta_n}{v_{n, \eps}} \psi(t, u) + v_{n,\varepsilon}^{p-1} h_n
\end{align}
for any $t \in (s, \tau ]$.
Now, by Theorem \ref{classical} ,   $\|\nabla_x u(t, \cdot)\|_\infty  \leq c_1$   and $\|D^2_x u(t, \cdot)\|_{L^\infty (B_R)} \le c_2(t-s)^{-1/2} $ for any $t \in (s, \tau ]$, $R>0$  and some positive constants $c_1$, $c_2$ independent of $t$. Using such  estimates and recalling that $\theta_n$ has compact support in $\Rd$, assumption \eqref{hypo} yields  that for any $t \in (s, \tau ]$, any $n \in \N$, the functions $v_{n,\varepsilon}^{p-1}(t)D_t v_{n,\varepsilon}(t)$, $v_{n,\varepsilon}^{p-1}{\mathcal{A}}(t)v_{n,\varepsilon}$ and
$h_{n} v_{n,\varepsilon}^{p-1}(t)$  belong to $L^1(\Rd, \mu_t)$.
Hence we can integrate  \eqref{raffi} with respect to $\mu_t$ in $\Rd$ to get
\begin{equation}
\label{primo}
\begin{array}{l}
\ds \int_{\Rd}D_t(v_{n,\varepsilon}^p) d\mu_t
\\
\\ \ds \le p \int_{\Rd}v_{n,\varepsilon}^{p-1}\mathcal{A}(t)v_{n,\varepsilon} d\mu_t+ p \int_{\Rd}v_{n,\varepsilon}^{p-2} u\theta_n \psi(t,u)\,d\mu_t
 + p \int_{\Rd}v_{n,\varepsilon}^{p-1} h_n\, d\mu_t,
 \end{array}
\end{equation}
for every $t \in (s,\tau ]$.   Lemma \ref{derivative},  applied to the function $g : = v_{n,\varepsilon}^p$ in any interval $[a,b]\subset (s, \tau]$,   gives
\begin{equation}
\label{secondo}
 \int_{\Rd}D_tv_{n,\varepsilon}^p d\mu_t= D_t \|v_{n,\varepsilon}\|_{L^p(\Rd,\mu_t)}^p+ \int_{\Rd}\mathcal{A}(t)(v_{n,\varepsilon}^p)d\mu_t, \quad s<t\leq \tau,
\end{equation}
and since
\begin{equation}
\label{fame}
 \mathcal{A}(t)(v_{n,\varepsilon}^p) = pv_{n,\varepsilon}^{p-1} \mathcal{A}(t)v_{n,\varepsilon}+p(p-1)v_{n,\varepsilon}^{p-2}\langle Q(t)\nabla_x v_{n,\varepsilon}, \nabla_x v_{n,\varepsilon}\rangle  ,
 \end{equation}
putting together \eqref{primo} and \eqref{secondo} we get
\begin{equation}
\label{stimaLp}
\begin{array}{lll}
 D_t \|v_{n,\varepsilon}(t, \cdot)\|_{L^p(\Rd,\mu_t)}^p & \le  & \ds p  \int_{\Rd}v_{n,\varepsilon}^{p-2}\theta_n u\psi(t,u) d\mu_t+
 \\
 \\
 && +\ds  p \int_{\Rd}(v_{n,\varepsilon}^{p-1}h_n
 -  (p-1) v_{n,\varepsilon}^{p-2}\langle Q(t)\nabla_x v_{n,\varepsilon}, \nabla_x v_{n,\varepsilon}\rangle ) d\mu_t
 \\
 \\
 & \leq &  \ds p \int_{\Rd}v_{n,\varepsilon}^{p-2}\theta_n u\psi(t,u) d\mu_t+ p \int_{\Rd} v_{n,\varepsilon}^{p-1}h_n  \, d\mu_t.
\end{array}
 \end{equation}
Now we claim that there exists $K>0$ such that
\begin{equation}
\label{estimate}
  \int_{\Rd}v_{n,\varepsilon}(t, \cdot)^{p-1}h_n(t, \cdot) d\mu_t \leq \frac{K}{n}, \quad s<t\leq \tau .
\end{equation}
Once  \eqref{estimate} is proved, assumption \eqref{psi0} allows us to estimate $v_{n,\varepsilon}^{p-2}\theta_n u \psi(t,u) $ by  $\psi_0 v_{n,\varepsilon}^{p-2}\theta_nu^2$  and then to proceed. Indeed, by \eqref{stimaLp} we obtain

\begin{equation}
\label{stimavneps}
\begin{array}{l}
\|v_{n,\varepsilon}(t_2, \cdot) \|_{L^p(\Rd,\mu_{t_2})} - \|v_{n,\varepsilon}(t_1, \cdot)\|_{L^p(\Rd,\mu_{t_1})}
\\
\\
\leq \psi_0\ds \int_{t_1}^{t_2} \bigg( \|v_{n,\varepsilon}(r, \cdot) \|_{L^p(\Rd,\mu_r)}^{ 1-p} \int_{\R^d} v_{n,\varepsilon}(r, \cdot)^{p-2}u^2(r,\cdot) \theta _n d\mu_r\bigg) dr
\\
\\
\ds + \int_{t_1}^{t_2} \bigg( \|v_{n,\varepsilon}(r, \cdot) \|_{L^p(\Rd,\mu_r)}^{ 1-p} \int_{\R^d} v_{n,\varepsilon}(r, \cdot)^{p-1}h_n(r, \cdot) \bigg) dr
 \end{array}
 \end{equation}
for any $s\leq  t_1 \leq t_2 \le \tau$. Hence, taking \eqref{estimate} into account and letting $n\to \infty$, we get
\begin{align}\label{parma2}
&\|(u(t_2, \cdot)^2+\varepsilon)^{1/2} \|_{L^p(\Rd,\mu_{t})} - \|(u(t_1, \cdot)^2+\varepsilon)^{1/2}\|_{L^p(\Rd,\mu_{s})}
\nnm\\
&\le \psi_0\ds \int_{t_1}^{t_2} \bigg( \|(u(r, \cdot)^2+\varepsilon)^{1/2} \|_{L^p(\Rd,\mu_r)}^{ 1-p} \int_{\R^d} (u(r, \cdot)^2+\varepsilon)^{(p-2)/2}u^2(r,\cdot) d\mu_r\bigg) dr
\end{align}

Letting $\varepsilon \to 0$ in \eqref{parma2} yields
\begin{align*}
 \|u(t_2, \cdot) \|_{L^p(\Rd,\mu_{t_2})}\leq & \|u(t_1, \cdot) \|_{L^p(\Rd,\mu_{t_1})} + \psi_0\int_{t_1}^{t_2} \|u(r, \cdot) \|_{L^p(\Rd,\mu_r)}dr
 \end{align*}
for any $s \le t_1 \le t_2 \le \tau$, and from the Gronwall Lemma, estimate \eqref{aim2} follows.

It remains to prove \eqref{estimate}. We have $\int_{\R^d} v_{n,\varepsilon}(t, \cdot)^{p-1}h_n(t, \cdot)d\mu_t = \sum_{k=1}^{3}I_k(t)$, where
$$I_1(t) = -\frac{1}{2}\int_{\R^d} v_{n,\varepsilon}(t, \cdot)^{p-2}u^2(t, \cdot)\mathcal{A}(t)\theta_nd\mu_t, $$
$$I_2(t) = \int_{\R^d} v_{n,\varepsilon}(t, \cdot)^{p-1} \bigg( \frac{u^3\theta_n}{v_{n,\varepsilon}^3}-\frac{2u}{v_{n,\varepsilon}} \bigg)  \langle Q(t, \cdot)\nabla \theta_n,\nabla_x u  \rangle d\mu_t, $$
$$I_3(t) = \int_{\R^d} v_{n,\varepsilon}(t, \cdot)^{p-4}   \frac{u^4}{4}   \langle Q(t, \cdot)\nabla \theta_n,\nabla \theta_n \rangle d\mu_t.  $$
Let us compute $\mathcal{A}(t)\theta_n$. For any $t \in I$ and $x \in \Rd\setminus\{0\}$, we have
\begin{align*}
{\rm Tr}(Q(t,x)D^2\theta_n(x)) =&\zeta''\left(\frac{|x|}{n}\right)\frac{\langle Q(t,x)x,x \rangle}{n^2|x|^2}
+\zeta'\left(\frac{|x|}{n}\right)\frac{\textrm{Tr}(Q(t,x))}{n |x|}\\
&- \zeta'\left(\frac{|x|}{n}\right)\frac{\langle Q(t,x)x,x \rangle}{n |x|^3 },
\end{align*}
and
\begin{equation*}
\langle b(t,x),\nabla \theta_n(x)\rangle =  \zeta'\left(\frac{|x|}{n}\right)\frac{\langle b(t,x),x\rangle}{n|x|}.
\end{equation*}
 Recalling that the supports of $\zeta'$ and $\zeta''$ are contained in $[1,2]$ and that $\zeta'\leq 0$,  \eqref{hypo} yields
$$\sup_{t\in[s,\tau]}|{\rm{Tr}}(Q(t,x)D^2 \theta_n(x))| \leq   \frac{C}{n} \varphi(x),\qquad\;\, x \in \Rd  ,$$
and
$$\inf_{t\in[s,\tau]}\langle b(t,x), \nabla \theta_n(x)\rangle \geq   \zeta'\left(\frac{|x|}{n}\right)\frac{C}{n} \varphi(x),\qquad\;\, x \in \Rd ,$$
where $C$ is a positive constant depending only on $C_0,C_1,C_2,\Vert \zeta'\Vert_\infty$ and $\Vert \zeta''\Vert_\infty$. Therefore, there is $K\geq 0$ such that
$\mathcal{A}(t)\theta_n\geq -K\varphi(x)/n$, that implies
$$I_1(t)\leq \frac{K}{2n} \int_{\R^d}v_{n,\varepsilon}(t, \cdot)^{p} \varphi \,d\mu_t \leq \frac{K}{2n}(\|f\|_{\infty}^2 +\eps)^{p/2}M_{\varphi}, $$
where $M_\varphi$ is the constant defined in \eqref{nor_fi}. In a similar way we estimate
$$|Q(t,x)\nabla \theta_n|\le  \Vert \zeta'\Vert_\infty \frac{|Q(t,x)x|}{n|x|}\leq \frac{C}{n}\varphi(x), \qquad\;\, t \in [s,\tau],\, x \in \Rd. $$
Since Hypothesis \ref{smooth} holds,   $\|\nabla_x u(t,\cdot)\|_{\infty}$ is bounded in $[s, \tau]$ by Theorem \ref{classical},
and
$$\begin{array}{lll}
|I_2(t)|& \leq & \ds \frac{3C}{n} \sup_{s\leq t\leq \tau}\|\nabla_x u(t,\cdot)\|_{\infty}\int_{\R^d} v_{n,\varepsilon}(t, \cdot)^{p-1} \varphi \,d\mu_t
\\
\\
&\leq & \ds \frac{3C}{n}\sup_{s\leq t\leq \tau}\|\nabla_x u(t,\cdot)\|_{\infty}(\|f\|_{\infty}^2 +\eps)^{(p-1)/2}M_{\varphi},
\end{array}$$
$$|I_3(t)|\leq \frac{C}{4n}\|\nabla \theta_n\|_{\infty} \int_{\R^d}  v_{n,\varepsilon}(t, \cdot)^{p} \varphi \,d\mu_t
\leq \frac{C}{4n^2}  \|\zeta'\|_{\infty} (\|f\|_{\infty}^2 +\eps)^{(p)/2}M_{\varphi} ,$$
and \eqref{estimate} follows.

\vspace{2mm}
{\em Step 2.} Now, let $f \in L^p(\Rd, \mu_s)$ and $(f_n)\subset C^1_b(\Rd)$  converge to $f$ in $L^p(\Rd,\mu_s)$ as $n \to +\infty$ (see \cite[Lemma 2.5.]{AngLorLun}). Step 1 yields
$$\| u_{f_n}(t,\cdot)\|_{L^p(\Rd,\mu_t)}\le e^{\psi_0(t-s)}\|f_n\|_{L^p(\Rd,\mu_s)},\qquad\;\, n\in \N,\,t\ge s.$$
Moreover, by estimate \eqref{terremoto} there is a constant $K$, depending only on $s$ and $\tau$, such that
$$\|u_{f_n}(t,\cdot)-u_f(t,\cdot)\|_{L^p(\Rd,\mu_t)}  \le K \|f_n-f\|_{L^p(\Rd, \mu_s)},\qquad\;\, s\leq t\leq \tau ,\, n \in \N .
$$
Consequently,
\begin{equation*}
\|u_f(t, \cdot)\|_{L^p(\Rd,\mu_t)} = \lim_{n\to \infty} \|u_{f_n}(t, \cdot)\|_{L^p(\Rd,\mu_t)}
\le e^{\psi_0(t-s)}\|f\|_{L^p(\Rd, \mu_s)},\quad s\leq t\leq \tau.
\end{equation*}
By the arbitrariness of $\tau >s$ we conclude that $u_f$ satisfies \eqref{aim2}.
\end{proof}

Now we turn to  hypercontractivity in   problem \eqref{p_nonlin}. As  in the linear case, we need some logarithmic Sobolev inequalities with respect to the measures $ \mu_t$.

\begin{hyp}\label{LSI}
There exists a positive constant $K$ such that
\begin{align}\label{moto}
\int_{\Rd} |g|^\gamma\log |g| \,d\mu_r\le  \|g\|_{L^\gamma(\Rd,\mu_r)}^\gamma \log\|g\|_{L^\gamma(\Rd,\mu_r)} + \gamma K\int_{\{g\neq 0\}}\!\!\!\!|g|^{\gamma-2}|\nabla g|^2 d\mu_r,
\end{align}
for any $r\in I$, $g\in C^1_b(\R^d)$ and $\gamma\in (1,+\infty)$.
\end{hyp}

\begin{thm}\label{martino}
Let the assumptions of Theorem  \ref{martina} be satisfied, and assume in addition that  Hypothesis  \ref{LSI} holds. For any $s\in I$, $p>1$ set
$$  p(t):=e^{\eta_0 K^{-1}(t-s)}(p-1)+1, \quad t\geq s, $$
where $\eta_0$ is the ellipticity constant of Hypothesis \ref{base}(ii),  and $K$ is the constant in \eqref{moto}.  Then for every $f\in L^p(\Rd, \mu_s)$,   $u_f(t, \cdot)\in L^{p(t)}(\Rd, \mu_t)$ for every $t>s$ and
\begin{equation}\label{claim}
\|u(t,\cdot)\|_{L^{p(t)}(\Rd, \mu_t)}\le e^{\psi_0(t-s)}  \|f\|_{L^p(\Rd, \mu_s)},\quad t>s.
\end{equation}
\end{thm}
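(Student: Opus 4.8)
The plan is to follow the two-step scheme of the proof of Theorem~\ref{martina}: first prove \eqref{claim} for $f\in C^1_b(\R^d)$, where $u=u_f$ is a bounded classical solution with spatially bounded gradient (by Theorems~\ref{classical} and~\ref{Exlarge}, exactly as in Step~1 there), and then pass to arbitrary $f\in L^p(\R^d,\mu_s)$ by approximation. For $f\in C^1_b(\R^d)$ I would reuse the cutoff functions $\theta_n$ and the regularizations $v_{n,\eps}=(\theta_n u^2+\eps)^{1/2}$ of that proof, which satisfy the Cauchy problem \eqref{p_n}, and for which, for $t>s$, one has $v_{n,\eps}(t,\cdot)\in C^1_b(\R^d)$ and $v_{n,\eps}(t,\cdot)\ge\sqrt\eps>0$, so that Hypothesis~\ref{LSI} applies to $g=v_{n,\eps}(t,\cdot)$.

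The core object is $\beta_{n,\eps}(t):=\|v_{n,\eps}(t,\cdot)\|_{L^{p(t)}(\R^d,\mu_t)}$. I would first check that $\beta_{n,\eps}$ is continuous on $[s,\tau]$ (as in the proof of Theorem~\ref{martina}, using Lemma~\ref{continuita'} and dominated convergence, noting that now the exponent $p(t)$ also varies continuously) and differentiable on $(s,\tau)$ via Lemma~\ref{derivative}. Writing $p(t)\log\beta_{n,\eps}(t)=\log\int_{\R^d}v_{n,\eps}^{p(t)}\,d\mu_t$ and differentiating, the terms involving $p'(t)$ and $\log\beta_{n,\eps}$ combine into
\[
\frac{p'(t)}{\beta_{n,\eps}(t)^{p(t)}}\Bigl(\int_{\R^d}v_{n,\eps}^{p(t)}\log v_{n,\eps}\,d\mu_t-\beta_{n,\eps}(t)^{p(t)}\log\beta_{n,\eps}(t)\Bigr),
\]
which, since $p'(t)\ge 0$ and $v_{n,\eps}>0$, is bounded by Hypothesis~\ref{LSI} (with $\gamma=p(t)>1$) by $\dfrac{p'(t)\,p(t)\,K}{\beta_{n,\eps}(t)^{p(t)}}\displaystyle\int_{\R^d}v_{n,\eps}^{p(t)-2}|\nabla v_{n,\eps}|^2\,d\mu_t$. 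Meanwhile Lemma~\ref{derivative} applied to $g=v_{n,\eps}^{q}$ with the exponent momentarily frozen at $q=p(t)$, together with the equation \eqref{p_n}, the bound \eqref{estg_n} and the identity \eqref{fame}, produces a term $-\dfrac{p(t)(p(t)-1)}{\beta_{n,\eps}(t)^{p(t)}}\displaystyle\int_{\R^d}v_{n,\eps}^{p(t)-2}\langle Q(t)\nabla v_{n,\eps},\nabla v_{n,\eps}\rangle\,d\mu_t\le-\dfrac{\eta_0\,p(t)(p(t)-1)}{\beta_{n,\eps}(t)^{p(t)}}\displaystyle\int_{\R^d}v_{n,\eps}^{p(t)-2}|\nabla v_{n,\eps}|^2\,d\mu_t$ by Hypothesis~\ref{base}(ii), a contribution from $\psi$ estimated through \eqref{psi0}, and a remainder controlled by $K/n$ exactly as in \eqref{estimate} (the estimates of $I_1,I_2,I_3$ use only $v_{n,\eps}\le(\|f\|_\infty^2+\eps)^{1/2}$ and the bounds on $\nabla_x u$, $D^2_x u$, hence carry over verbatim with $p(t)$ in place of $p$, uniformly for $t$ in a bounded interval). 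The decisive point is that $p(t)$ is chosen so that $K p'(t)=\eta_0(p(t)-1)$, so the two gradient terms cancel identically, leaving
\[
\frac{d}{dt}\log\beta_{n,\eps}(t)\le\frac{\psi_0}{\beta_{n,\eps}(t)^{p(t)}}\int_{\R^d}v_{n,\eps}^{p(t)-2}\theta_n u^2\,d\mu_t+\frac{K}{n\,\beta_{n,\eps}(t)^{p(t)}},\qquad s<t<\tau.
\]

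Integrating this from $s$ to $t$ (using continuity at $s$ for the endpoint, and $\beta_{n,\eps}\ge\sqrt\eps$ so no logarithm is singular), then letting $n\to\infty$ — so that $\theta_n\to1$, $v_{n,\eps}\to(u^2+\eps)^{1/2}$, the $K/n$ remainder vanishes (its integrand is bounded below away from $0$ on $[s,t]$, since $p(\cdot)$ is bounded there), and $\beta_{n,\eps}(s)\to\|(f^2+\eps)^{1/2}\|_{L^p(\R^d,\mu_s)}$ — yields an inequality between $\log\|(u(t,\cdot)^2+\eps)^{1/2}\|_{L^{p(t)}(\mu_t)}$ and $\log\|(f^2+\eps)^{1/2}\|_{L^p(\mu_s)}$ plus $\psi_0\int_s^t\bigl(\int_{\R^d}(u^2+\eps)^{(p(r)-2)/2}u^2\,d\mu_r\bigr)\bigl\|(u(r,\cdot)^2+\eps)^{1/2}\bigr\|_{L^{p(r)}(\mu_r)}^{-p(r)}\,dr$. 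Finally I let $\eps\to0$: the ratio in the last integral lies in $[0,1]$ and tends to $1$ at every $r$ for which $u(r,\cdot)\not\equiv0$ (if $u(r_0,\cdot)\equiv0$ then, since $\psi(\cdot,0)=0$, uniqueness forces $u\equiv0$ for $t\ge r_0$ and \eqref{claim} is trivial from there on), so by dominated convergence the last term converges to $\psi_0(t-s)$ and \eqref{claim} follows for $f\in C^1_b(\R^d)$. For general $f\in L^p(\R^d,\mu_s)$, choose $(f_n)\subset C^1_b(\R^d)$ with $f_n\to f$ in $L^p(\R^d,\mu_s)$; by \eqref{terremoto} one has $u_{f_n}(t,\cdot)\to u_f(t,\cdot)$ in $L^p(\R^d,\mu_t)$, hence $\mu_t$-a.e.\ along a subsequence, and Fatou's lemma together with the bound just proved for the $u_{f_n}$ gives \eqref{claim}.

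The main obstacle I expect is purely technical: keeping the differentiation of $t\mapsto\beta_{n,\eps}(t)^{p(t)}$ rigorous with the moving exponent, and justifying the two limits $n\to\infty$ and $\eps\to0$ — in particular the dominated convergence of the $\psi_0$-term when $\psi_0<0$, where one cannot simply drop $\eps$ via the crude bound $\theta_n u^2\le v_{n,\eps}^2$ but must instead use that the normalized ratio converges to $1$. The algebraic heart, the exact cancellation of the gradient terms, is forced by the definition of $p(t)$ and needs nothing beyond Hypotheses~\ref{base}(ii) and~\ref{LSI}.
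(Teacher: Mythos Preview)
Your Step~1 is essentially the paper's argument: the same cutoffs $\theta_n$, the same regularization $v_{n,\eps}$, Lemma~\ref{derivative} for the time derivative of $\int v_{n,\eps}^{p(t)}\,d\mu_t$, the logarithmic Sobolev inequality together with the ellipticity bound to absorb the entropy term, and the defining ODE $Kp'(t)=\eta_0(p(t)-1)$ to kill the gradient contribution. The paper then says the resulting inequality is \eqref{stimaLp} with $p(t)$ in place of $p$ and defers to the endgame of Theorem~\ref{martina} (integrate, let $n\to\infty$, $\eps\to0$, Gronwall); you instead integrate $\frac{d}{dt}\log\beta_{n,\eps}$ directly and carry out the two limits explicitly, including the point that for $\psi_0<0$ one needs $R_\eps(r)\to1$ rather than the crude bound $R_\eps\le1$. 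Both organizations lead to the same place.

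Your Step~2, however, is genuinely different from the paper's. The paper first applies Step~1 to the \emph{linear} problem $\psi\equiv0$ to obtain hypercontractivity of $G(t,r)$ from $L^{p(r)}(\mu_r)$ to $L^{p(t)}(\mu_t)$, then feeds this into the mild formulation \eqref{defi_mild} and uses the Lipschitz bound on $\psi$ plus Gronwall to show that $(u_{f_n}(t,\cdot))$ is Cauchy in $L^{p(t)}(\R^d,\mu_t)$; finally \eqref{terremoto} identifies the limit with $u_f(t,\cdot)$. Your route is more elementary: you use only \eqref{terremoto} to get $L^p(\mu_t)$-convergence, pass to an a.e.\ convergent subsequence, and apply Fatou. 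This avoids both the separate linear hypercontractivity statement and the second Gronwall argument, at the cost of not proving the stronger fact that the approximants converge in the $L^{p(t)}$ norm --- but since the theorem only asserts the bound \eqref{claim}, Fatou is enough.
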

\begin{proof}
We follow the proof of Theorem \ref{martina} and  we use the notation   introduced there; to shorten formulae we denote  the norm $\|\cdot \|_{L^{p(t)}(\R^d, \mu_t)}$ by $\|\cdot\|_{p(t)}$. As in Theorem \ref{martina}, in the first step we prove that \eqref{claim} holds for $f\in C^1_b(\R^d)$ and in the second step we consider any $f\in L^p(\Rd, \mu_s)$.

{\em Step 1.}
Let $f \in C^1_b (\Rd)$. The functions defined in \eqref{beta} are replaced here by
$$\beta_{n,\varepsilon}(t) := \|v_{n,\varepsilon}(t,  \cdot)\|_{L^{p(t)}(\R^d, \mu_t)}, \quad t\geq s. $$
Arguing as in Theorem \ref{martina} we see that $\beta_{n,\varepsilon}$ is continuous in $[s, +\infty)$.   Lemma \ref{derivative} yields that the function $t\mapsto \int_{\Rd} v_{n,\varepsilon}^{p(t)}d\mu_t$
is differentiable in $(s,+\infty)$, for any $n \in \N$, $\varepsilon>0$, and   using \eqref{fame} we get
$$
\begin{array}{l}
\ds \int_{\Rd}D_t v_{n,\varepsilon}^{p(t)}d\mu_t-\int_{\Rd}\A(t)v_{n,\varepsilon}^{p(t)}d\mu_t
=  \ds p'(t)\int_{\Rd}  v_{n,\varepsilon}^{p(t)}\log v_{n,\varepsilon}d\mu_t
\\
\\
 \ds +p(t)\int_{\Rd} v_{n,\varepsilon}^{p(t)-1}g_n(u)d\mu_t
 - p(t)(p(t)-1)\int_{\Rd}  v_{n,\varepsilon}^{p(t)-2}\langle Q(t)\nabla_x v_{n,\varepsilon},\nabla_x v_{n,\varepsilon}\rangle d\mu_t.
\end{array}$$
Therefore, $\beta_{n,\varepsilon}$
is differentiable in $(s,+\infty)$,  and its derivative is given by
$$\begin{array}{lll}
\beta_{n,\varepsilon}'(t) &= & \ds  \|v_{n,\varepsilon}(t,\cdot)\|_{p(t)}\left\{ -\frac{p'(t)}{p^2(t)}\log \int_{\Rd}v_{n,\varepsilon}^{p(t)}d\mu_t\right.
\\
\\
&& \ds +\frac{1}{p(t)\|v_{n,\varepsilon}(t,\cdot)\|_{p(t)}^{p(t)}}\left[ p'(t)\int_{\Rd}   v_{n,\varepsilon}^{p(t)}\log v_{n,\varepsilon}d\mu_t +p(t)\int_{\Rd} v_{n,\varepsilon}^{p(t)-1}g_n d\mu_t \right.
\\
\\
&& \ds \left. \left. - p(t)(p(t)-1)\int_{\Rd}  v_{n,\varepsilon}^{p(t)-2}\langle Q(t, \cdot)\nabla_x v_{n,\varepsilon},\nabla_x v_{n,\varepsilon}\rangle d\mu_t  \right] \right\}.
\end{array}$$
By the logarithmic Sobolev inequality \eqref{LSI} and the ellipticity condition,
$$
\begin{array}{l}
\ds p'(t)\int_{\Rd}   v_{n,\varepsilon}^{p(t)}\log v_{n,\varepsilon}d\mu_t - p(t)(p(t)-1)\int_{\Rd}  v_{n,\varepsilon}^{p(t)-2}\langle Q(t, \cdot)\nabla_x v_{n,\varepsilon},\nabla_x v_{n,\varepsilon}\rangle d\mu_t
\\
\\
\leq \ds p'(t) \|v_{n,\varepsilon}(t,\cdot)\|_{p(t)}^{p(t)} \log \|v_{n,\varepsilon}(t,\cdot)\|_{p(t)}
\\
\\
\ds +
p(t)( p'(t) K -\eta_0 (p(t)-1))\int_{\Rd}  v_{n,\varepsilon}^{p(t)}|\nabla_x v_{n,\varepsilon}|^2 d\mu_t
\\
\\
= \ds p'(t) \|v_{n,\varepsilon}(t,\cdot)\|_{p(t)}^{p(t)} \log \|v_{n,\varepsilon}(t,\cdot)\|_{p(t)}.
\end{array}$$
Such inequality, together with
the dissipativity condition \eqref{psi0} and the inequality $g_n\leq h_n$ (see \eqref{estg_n}) yields
$$\begin{array}{l}
D_t\|v_{n,\varepsilon}(t,\cdot)\|_{p(t)} \le
\\
\\
\leq \ds \|v_{n,\varepsilon}(t,\cdot)\|_{p(t)}^{1-p(t)} \bigg(
\int_{\Rd}v_{n,\varepsilon}^{p(t)-2}\theta_n u\psi(t,u) d\mu_t+   \int_{\Rd} v_{n,\varepsilon}^{p(t)-1}h_n  \, d\mu_t\bigg)
\end{array}$$
which  is equivalent to \eqref{stimaLp} with $p(t)$ in place of $p$. It implies that \eqref{stimavneps} holds, still with $p(t)$ in place of $p$, and arguing as in the proof of Theorem \ref{martina} we arrive at \eqref{claim}.

{\em Step 2.}  Let $f\in L^p(\Rd, \mu_s)$,  and let $(f_n)$ be a sequence of functions in $C^1_b(\Rd)$ such that $\|f_n-f\|_{L^p(\Rd,\mu_s)}$ vanishes as $n\to \infty$.
From Step 1, applied to the linear case $\psi\equiv 0$ we obtain  that $G(t, r)$ maps $L^{p( r)}(\Rd,\mu_r)$ into $L^{p(t)}(\Rd,\mu_t)$ for $t\geq r\geq s$, and
$$\|G(t, r)g\|_{L^{p(t)}(\Rd,\mu_t)}\le e^{\psi_0(t-r)}\|g\|_{L^{p( r)}(\Rd,\mu_r)}, \quad t\ge r\geq s, \;g\in L^{p( r)}(\Rd,\mu_r). $$
Fix $\tau >s$. According to Hypothesis \ref{hyp1_F}, let $L>0$ be such that $|\psi(r, x)-\psi(r,y)|\leq L|x-y|$ for every $r\in[s, \tau]$, $x$, $y\in \R^d$. Using \eqref{defi_mild} we obtain for $n$, $m\in \N$ and $s\leq t\leq \tau$
$$\begin{array}{l}
\|u_{f_n}(t,\cdot)-u_{f_m}(t,\cdot)\|_{ p(t) }
\le   e^{\psi_0(t-s)} \|f_n-f_m\|_{L^p(\Rd, \mu_s)}
\\
\\ \ds + \int_s^t e^{\psi_0(t-r)} \|\psi(r, u_{f_n}(r, \cdot))-\psi(r, u_{f_m}(r,\cdot))\|_{p(r )}dr
\\
\le \ds e^{\psi_0(t-s)} \Big(\|f_n-f_m\|_{L^p(\Rd, \mu_s)} + L\int_s^t e^{\psi_0(t-r)}\|u_{f_n}(r,\cdot)-u_{f_m}(r,\cdot)\|_{p( r)} dr\Big)
\end{array}$$
The Gronwall Lemma yields
$$\|u_{f_n}(t,\cdot)-u_{f_m}(t,\cdot)\|_{L^{p(t)}(\Rd,\mu_t)}\le e^{(\psi_0+L)(t-s)}\|f_n-f_m\|_{L^p(\Rd, \mu_s)}, \quad s\leq t\leq \tau, $$
so  that $(u_{f_n}(t,\cdot))$ is a Cauchy sequence in $L^{p(t)}(\Rd, \mu_t)$ for any $t \in [s,\tau]$, and it converges to some $v(t) \in L^{p(t)}(\Rd, \mu_t)$. We already know, from estimate \eqref{terremoto}, that  $(u_{f_n}(t,\cdot))$ converges to $u_f (t, \cdot)$ in $L^{p }(\Rd, \mu_t)$ for any $t \in [s,\tau]$. Therefore, $v(t) = u_f (t, \cdot) \in L^{p(t)}(\Rd, \mu_t)$ for any $t \in [s,\tau]$. Moreover, by Step 1 we have
$$\|u_{f_n}(t, \cdot)\|_{L^{p(t)}(\Rd,\mu_t)}\le e^{\psi_0(t-s)}\| f_n \|_{L^{p }(\Rd,\mu_s)}, \quad t\geq s, \;n \in \N, $$
and letting $n\to \infty$ we obtain $\|u_{f}(t,\cdot)\|_{L^{p(t)}(\Rd,\mu_t)}\le e^{\psi_0(t-s)}\|f\|_{L^p(\Rd,\mu_s)}$ for any $t \in [s,\tau]$,
which yields \eqref{claim} since $\tau$ is arbitrary.
\end{proof}

\begin{rmk}{\rm{
Assumptions \eqref{hypo} are not very restrictive, because in explicit examples we can play with the choice of $\varphi$. For instance, let $\A(t)$ be as in \eqref{oper} with
$$Q(t,x)=q(t)(1+|x|^2)^lQ^0,\quad b(t,x)=-b(t)x(1+|x|^2)^m,\qquad\;\, t\in I, x \in \Rd,$$
with $m$, $l \geq 0$. Here $Q^0$ is a positive definite real symmetric matrix and the functions $q$, $b$ have positive infimum and belong to $C^{\alpha/2}_{\rm loc}(I)\cap C_b(I)$. A straightforward computation shows that for every $r>0$ the function $\varphi(x)= (1+|x|^2)^r$, $x \in \Rd$, $r>0$ satisfies
\begin{align*}
(\A(t)\varphi)(x) \le  &2r\varphi(x)\left\{\sup_{t\in I} q(t)[2(r-1)^+\langle Q_0x, x\rangle (1+|x|^2)^{l-2}+{\rm Tr}(Q^0)(1+|x|^2)^{l-1}]\right.\nnm\\
&\left.\quad\quad\quad\,\,- \inf_{t\in I}b(t)|x|^2(1+|x|^2)^{m-1}\right\}.
\end{align*}
Thus, if $m>l-1$ Hypothesis \ref{base}(iii) and Hypothesis \ref{smooth} are satisfied, for every choice of $r>0$. Assumptions \eqref{hypo} hold provided $r$ is chosen large enough ($r\geq l$). }}\end{rmk}


\appendix
\section{Linear parabolic equations in balls}

This Appendix is devoted to the proof of estimates \eqref{U1}. Since $R$ is arbitrary, everywhere we replace $R+1$ by $R$.
Our tools are the general results of \cite{Acq,AT} and interpolation arguments.

We choose $X= C(\overline{B}_R)$. The realizations $A(t): D(A(t))=\{ f\in \cap_{p>1}W^{2,p}(B_R):\;f_{|\partial B_R}=0, \;\mathcal{A}(t)f\in X\}$ of
$\mathcal{A}(t)$ in $X$ are sectorial operators by the Stewart's Theorem (\cite{Stewart}). Their domains depend on $t$, but the interpolation spaces $(X, D(A(t)))_{\theta, \infty}$  are independent of $t$. Indeed, we have
\begin{equation}
\label{interp1}
(X, D(A(t)))_{\theta, \infty}=  \{ f\in C^{2\theta}(\overline{B}_R):\; f_{|\partial B_R}=0\}
\end{equation}
for $\theta\neq 1/2$ and
\begin{equation*}
(X, D(A(t)))_{1/2, \infty}=  \{ f\in \mathcal{C}^1(\overline{B}_R):\; f_{|\partial B_R}=0\}
\end{equation*}
where $\mathcal{C}^1(\overline{B}_R) $ is the Zygmund space of the continuous functions such that
$$\sup_{x, y\in \overline{B}_R, \,x\neq y} \frac{|f(x) + f(y) -2f((x+y)/2)|}{|x-y|} <+\infty .$$
The respective norms are equivalent to the $(X, D(A(t)))_{\theta, \infty}$-norm, with equivalence constants depending only on $R$ and on the H\"older norm of the coefficients in $[a,b]\times B_R$. Moreover,  for $0<\theta\leq \alpha/2$,
\begin{equation}
\label{interp3}
\begin{array}{l}
\{ f\in D(A(t)):\; A(t)f\in (X, D(A(t)))_{\theta, \infty}\} =
\\
\\= \{ f\in C^{2\theta + 2}(\overline{B}_R):\;  f_{|\partial B_R}=A(t)f_{|\partial B_R} =0\}
\end{array}
\end{equation}
and in such space the $C^{2\theta + 2}$ norm is equivalent to $f\mapsto \|f\|_{\infty} + \|A(t)f\|_{(X, D(A(t)))_{\theta, \infty}}$, with equivalence constants independent of $t$.
See \cite[Sect. 3.1.5]{libro}.

Moreover, each $A(t)$ is one to one, and for fixed $\theta\in (0, \alpha]$ we have
$$\|(A(t)^{-1}-A(s)^{-1})f\|_{C^{ \theta}(\overline{B}_R)}\leq C(t-s)^{\alpha/2}\|f\|_{\infty}, \quad a\leq s\leq t\leq b, \;f\in X, $$
as easily seen using the Schauder estimates for elliptic equations with $\alpha$-H\"older continuous coefficients.

Therefore, Hypothesis 7.3 of \cite{AT} is satisfied. By \cite[Thm. 4.2(iii)]{Acq}, for every $\mu \in (0,1)$ there exists $C_1= C_1(\mu )>0$ such that for every $f\in (X, D(A(0)))_{\mu, \infty}$ we have
$$ \begin{array}{l}
\| G(t,s)f\|_{(X, D(A(t)))_{\mu, \infty}} \leq C\|f\|_{(X, D(A(0)))_{ \mu, \infty}}, \quad a\leq s<t\leq b,
\\
\\
(t-s)^{ \mu}\| A(t)G(t,s)f\|_{X} \leq C\|f\|_{(X, D(A(0)))_{ \mu, \infty}}, \quad a\leq s<t\leq b.
\end{array}$$
Such estimates hold also for $\mu=0$, with the convention $(X, D(A(0)))_{0, \infty}=X$, by \cite[Thm. 4.1(i)]{Acq}.
By interpolation, for every $\mu \in [0,1)$
and
$\beta \in (0,1)$ there exists $C_2= C_2(\mu, \beta)>0$ such that for every $f\in (X, D(A(0)))_{\mu, \infty}$  we have
\begin{equation}
\label{intermedia}
(t-s)^{ \max\{  \beta-\mu, 0\}} \| G(t,s)f\|_{(X, D(A(t)))_{\beta, \infty}} \leq C_2\|f\|_{(X, D(A(0)))_{ \mu, \infty}}, \quad a\leq s<t\leq b.
\end{equation}
Taking $\beta = \eta/2$ and $\mu=0$,  \eqref{U1} follows for $\eta<2$, $\eta \neq 1$ from the characterizations \eqref{interp1}.
For $\eta =1$ such arguments give an estimate only  for   $\|G(t,s)f\|_{{\mathcal C}^{1}(\overline{B}_R)}$; however the
estimate in the $C^1$ norm is readily recovered from the estimates for $\eta \neq 1$ by interpolation, using e.g. the interpolatory estimate
$$\|\varphi\|_{C^1(\overline{B}_R)} \leq C\|\varphi\|_{C^{1/2}(\overline{B}_R)}^{1/2} \|\varphi\|_{C^{3/2}(\overline{B}_R)}^{1/2} , \quad \varphi\in C^{3/2} (\overline{B}_R). $$
So,   \eqref{U1} holds for $0<\eta <2$. Now we prove  \eqref{U1} for $\eta \in (2, 2+\alpha]$.

By  \cite[Thm. 6.4]{AT}, for every $\mu \in (0,1)$ and for every $\beta \leq \alpha/2$ there exist  $C_3= C_3(\mu)$ and $C_4=C_4(\mu, \beta)>0$ such that for every $f\in (X, D(A(0)))_{\mu, \infty}$
$$
\begin{array}{l}
 (t-s)^{1-\mu }\|A(t)G(t,s)f\|_{X} \leq C_3 \|f\|_{(X, D(A(0)))_{ \mu, \infty}}, \quad a\leq s<t\leq b,
\\
\\
 (t-s)^{1-\mu +\beta} \|A(t)G(t,s)f\|_{(X, D(A(t)))_{\beta, \infty}} \leq C_4\|f\|_{(X, D(A(0)))_{ \mu, \infty}}, \quad a\leq s<t\leq b.
\end{array}
$$
Taking into account   \eqref{interp3},   such estimates (with $\mu=\theta/2$, $\beta = (\eta-2)/2 $) yield
$$(t-s)^{ (\eta-\theta) /2}\|G(t, s)f\|_{C^{ \eta}(\overline{B}_R)} \leq C_5\|f\|_{C^{\theta}(\overline{B}_R)}, \quad a\leq s<t\leq b, $$
and \eqref{U1} is proved for $\eta\in (2, 2+\alpha]$.

As in the case $\eta=1$, a direct use of \eqref{intermedia} with $\mu=\theta/2$, $\beta=1$,  does not give an  estimate for $\|G(t,s)f\|_{C^{2 }(\overline{B}_R)}$ since the graph norm of each $A(t)$ is weaker than the $C^2$ norm. However,   as before we recover the $C^2$ estimate using the interpolatory estimate
$$\|\varphi\|_{C^2(\overline{B}_R)} \leq C\|\varphi\|_{C^{2-\varepsilon}(\overline{B}_R)}^{1/2} \|\varphi\|_{C^{2+\varepsilon}(\overline{B}_R)}^{1/2} , \quad \varphi\in C^{2+\varepsilon} (\overline{B}_R), $$
and  \eqref{U1} with $\eta = 2-\varepsilon$, $\eta = 2+\varepsilon$, $ \varepsilon \in (0, \alpha)$.

\end{document}